\DeclareSymbolFont{rsfscript}{OMS}{rsfs}{m}{b}
\DeclareSymbolFontAlphabet{\mathrsfs}{rsfscript}
\newtheorem{theo}{Theorem}[section]
\newtheorem{prop}[theo]{Proposition}
\newtheorem{lem}[theo]{Lemma}
\newtheorem{coro}[theo]{Corollary}
\def\equat{\refstepcounter{theo}\begin{equation}}
\def\endequat{\end{equation}}
  \def\aG{{\mathfrak a}}  
    \def\CM{{\mathbb{C}}}
  \def\gG{{\mathfrak g}}
  \def\lG{{\mathfrak l}}  
\def\MG{{\mathfrak M}}  \def\mG{{\mathfrak m}}  
    \def\NM{{\mathbb{N}}}
  \def\oG{{\mathfrak o}}  
  \def\pG{{\mathfrak p}}
\def\SG{{\mathfrak S}}  \def\sG{{\mathfrak s}}
    \def\ZM{{\mathbb{Z}}}
\def\Gb{{\mathbf G}}
\def\Lb{{\mathbf L}}    
    \def\NC{{\mathcal{N}}}
\def\Ob{{\mathbf O}}    \def\OC{{\mathcal{O}}}
  \def\pb{{\mathbf p}}  
    \def\QC{{\mathcal{Q}}}
\def\Sb{{\mathbf S}}    \def\SC{{\mathcal{S}}}
  \def\tb{{\mathbf t}}  
    \def\UC{{\mathcal{U}}}
    \def\VC{{\mathcal{V}}}
    \def\XC{{\mathcal{X}}}
    \def\YC{{\mathcal{Y}}}
    \def\ZC{{\mathcal{Z}}}
\def\Hrm{{\mathrm{H}}}
\def\Prm{{\mathrm{P}}}
\def\Trm{{\mathrm{T}}}
    \def\NCt{{\tilde{\mathcal{N}}}}
    \def\QCt{{\tilde{\mathcal{Q}}}}
  \def\xti{{\tilde{x}}}  \def\XCt{{\tilde{\mathcal{X}}}}
\def\Bba{{\bar{B}}}          \def\bba{{\bar{b}}}
\def\Eba{{\bar{E}}}          \def\eba{{\bar{e}}}
          \def\fba{{\bar{f}}}
\def\a{\alpha}
\def\b{\beta}
\def\g{\gamma}
\def\G{\Gamma}
\def\d{\delta}
\def\D{\Delta}
\def\e{\varepsilon}
\def\ph{\varphi}
\def\l{\lambda}
\def\o{\omega}
\def\th{\theta}
\def\t{\tau}
\def\z{\zeta}
\def\mub{{\boldsymbol{\mu}}}
\DeclareMathOperator{\diag}{{\mathrm{diag}}}
\DeclareMathOperator{\Hom}{{\mathrm{Hom}}}
\DeclareMathOperator{\Mat}{{\mathrm{Mat}}}
\DeclareMathOperator{\Spec}{{\mathrm{Spec}}}
\def\to{\rightarrow}
\def\longto{\longrightarrow}
\def\injto{\hookrightarrow}
\def\fonction#1#2#3#4#5{\begin{array}{rccc}
{#1} : & {#2} & \longto & {#3}  \\
& {#4} & \longmapsto & {#5} 
\end{array}}
\def\DS{\displaystyle}
\def\finl{~$\blacksquare$}
\def\lexp#1#2{\kern\scriptspace\vphantom{#2}^{#1}\kern-\scriptspace#2}
\def\le{\hspace{0.1em}\mathop{\leqslant}\nolimits\hspace{0.1em}}
\def\ge{\hspace{0.1em}\mathop{\geqslant}\nolimits\hspace{0.1em}}
\mathchardef\inferieur="321E
\mathchardef\superieur="321F
\def\eqna{\begin{eqnarray*}}
\def\endeqna{\end{eqnarray*}}
\def\mini{{\mathrm{min}}}
\def\itemth#1{\item[${\mathrm{(#1)}}$]}
\long\def\@car#1#2\@nil{#1}
\long\def\@first#1#2{#1}
\long\def\@second#1#2{#2}
\long\def\ifempty#1{\expandafter\ifx\@car#1@\@nil @\@empty
  \expandafter\@first\else\expandafter\@second\fi}
\theoremstyle{remark}
\newtheorem{rema}[theo]{Remark}
\newtheorem{exemple}[theo]{Example}
\theoremstyle{plain}
\def\BIL{LR}
\def\GAUCHE{L}
\def\CAR{CAR}
\def\FAM{FAM}
\def\xyinj{\ar@{^{(}->}}
\def\xysur{\ar@{->>}}
\def\xymap{\ar@{|->}}
\def\hlinewd#1{%
\noalign{\ifnum0=`}\fi\hrule \@height #1 %
\futurelet\reserved@a\@xhline}
\newlength\epaisLigne
\newcommand{\longiso}{\stackrel{\sim}{\longrightarrow}}
\def\hlinewd#1{%
\noalign{\ifnum0=`}\fi\hrule \@height #1 %
\futurelet\reserved@a\@xhline}
\def\Sp{\operatorname{\Sb\pb}\nolimits}
\def\sing{{\mathrm{sing}}}
\def\Sym{{\mathrm{Sym}}}
\def\Qu{{\mathsf{Q}}}
\def\bv{{\mathbf{v}}}
\def\GO{\operatorname{\Ob}\nolimits}
\def\Sp{\operatorname{\Sb\pb}\nolimits}
\begin{document}

\title{A new family of isolated symplectic singularities \\
with trivial local fundamental group}

\author{{\sc Gwyn Bellamy}}

\address{The Mathematics and Statistics Building, University of Glasgow, 
University Place, Glasgow G12 8QQ, Scotland} 
\email{gwyn.bellamy@glasgow.ac.uk}

\pagestyle{myheadings}

\makeatother

\author{{\sc C\'edric Bonnaf\'e}}
\address{IMAG, Universit\'e de Montpellier, CNRS, Montpellier, France} 
\email{cedric.bonnafe@umontpellier.fr}

\author{{\sc Baohua Fu}}
\address{AMSS, HLM and MCM, Chinese Academy of Sciences, 55 ZhongGuanCun East Road, Beijing, 100190, China and School of Mathematical Sciences, University of Chinese Academy of Sciences, Beijing, China}
\email{bhfu@math.ac.cn}

\author{{\sc Daniel Juteau}}
\address{LAMFA, Universit\'e de Picardie Jules Verne, CNRS, Amiens, France}
\email{daniel.juteau@u-picardie.fr}

\author{{\sc Paul Levy}}
\address{Department of Mathematics and Statistics Fylde College,
Lancaster University, Lancaster, LA1 4YF, United Kingdom}
\email{p.d.levy@lancaster.ac.uk}

\author{{\sc Eric Sommers}}
\address{Department of Mathematics and Statistics, University of Massachusetts, Amherst,
MA 01003-4515, USA}
\email{esommers@math.umass.edu}
\date{\today}

\thanks{
C.~B. is partially supported by the ANR projects GeRepMod (ANR-16-CE40-0010-01) and CATORE (ANR-18-CE40-0024-02).
B.~F. is supported by the NSFC grant No. 11688101.
D.~J. is grateful for the support of the ANR project GeRepMod (ANR-16-CE40-0010-01) and of the Charles Simonyi Endowment at the Institute for Advanced Study.
}

\begin{abstract}
We construct a new infinite family of 4-dimensional isolated symplectic singularities  with trivial 
local fundamental group, answering a question of Beauville raised in 2000.
Three constructions 
are presented for this family: (1) as singularities in blowups of 
the quotient of $\mathbb{C}^4$ by the dihedral group of order $2d$, (2)  
as singular points of Calogero-Moser spaces 
associated with dihedral groups of order $2d$ at equal parameters, 
(3) as singularities of a certain Slodowy slice in the $d$-fold 
cover of the nilpotent cone in $\sG\lG_d$.
\end{abstract}

\maketitle

\markboth{\sc Bellamy-Bonnaf\'e-Fu-Juteau-Levy-Sommers}{\sc Symplectic singularities with trivial 
fundamental group}

\section{Introduction}


Symplectic singularities were introduced by Beauville~\cite{beauville}. Their introduction has led to numerous important developments in both algebraic geometry and geometric representation theory. Basic examples of symplectic singularities 
include symplectic quotient singularities and singularities in normalizations of nilpotent orbit 
closures in semisimple Lie algebras~\cite[Section 2]{beauville}.  Note that any two dimensional 
symplectic singularity is  just a rational double point.

An isolated symplectic singularity of dimension $\ge 4$ is a normal isolated singularity whose 
smooth locus admits a holomorphic symplectic 2-form~\cite[$(1.2)$]{beauville}. 
It follows that such a singularity is canonical Gorenstein, hence its local 
fundamental group is finite~\cite{Br}. Symplectic quotient singularities give many examples of isolated 
symplectic singularities, but they all have  non-trivial local fundamental groups. 

Minimal nilpotent orbit closures $\overline{\OC}_\mini^\gG$ in simple Lie 
algebras $\gG$ give examples of isolated symplectic singularities with trivial local 
fundamental group.  In this case, the projective tangent cone $\Prm\Trm_0(\overline{\OC}_\mini^\gG)$ of 
$\overline{\OC}_\mini^\gG$ at $0$ is isomorphic to $\Prm \OC_\mini^\gG = \OC_\mini^\gG/\CM^\times$, which 
is smooth and  $G$-homogeneous (where $G$ is the adjoint Lie group of $\gG$). It turns out that 
the smoothness of projective tangent cones characterizes $\overline{\OC}_\mini^\gG$, by the following result. 

\medskip

\begin{theo}[Beauville]\label{theo:beauville}
Let $(\XC,x)$ be an isolated symplectic singularity whose projective 
tangent cone at $x$ is smooth. Then there exists a simple Lie algebra 
$\gG$ such that $(\XC,x)$ is locally analytically isomorphic 
to $(\overline{\OC}_\mini^\gG,0)$. 
Moreover, the singularity $(\overline{\OC}_\mini^\gG,0)$ has trivial local fundamental group if and 
only if $\gG \not\simeq \sG\pG_{2n}(\CM)$ for any $n \ge 1$. 
\end{theo}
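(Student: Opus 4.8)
The plan is to pass to the tangent cone of $(\XC,x)$, to recognise it as a minimal nilpotent orbit closure, and then to transfer the conclusion back to $(\XC,x)$. Write $R=\bigoplus_{n\ge 0}R_n$ with $R_n=\mG^n/\mG^{n+1}$; thus $R_0=\CM$, the $\CM$-algebra $R$ is generated by $R_1=\mG/\mG^2$, and $C:=\Spec R$ is the tangent cone, carrying the contracting $\CM^\times$-action given by the grading, whose unique fixed point is $0$. By hypothesis $Y:=\Proj R=\Prm\Trm_x(\XC)$ is smooth, so $C\setminus\{0\}$ is smooth. First I would show that $C$ is itself a symplectic variety: normal (in particular reduced), Gorenstein with rational --- equivalently canonical --- singularities, and carrying a symplectic form $\omega$ on $C\setminus\{0\}$. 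These properties are to be transported from $\XC$ through the $\CM^\times$-equivariant deformation of $\XC$ to its tangent cone (with Serre's criterion and the smoothness of $Y$ used to control the singular locus and depth of $C$), the form $\omega$ being obtained by degenerating the symplectic form of $\XC$ along the same family and using that the symplectic form of a symplectic singularity extends holomorphically to any resolution. I expect this step --- propagating symplecticity to the tangent cone, and especially the normality of $C$ --- to be the main technical obstacle.

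Granting this, let $\ell$ be the weight by which $\CM^\times$ rescales $\omega$; equivalently the Poisson bracket on $R$ has degree $-\ell$. The decisive point is that $\ell=1$. That $\ell$ is a positive integer follows from canonicity and the contracting action (for instance $\ell\le 0$ would produce a nowhere-vanishing holomorphic 1-form descending to the Fano variety $Y$). If $\ell\ge 3$, then $\{R_1,R_1\}\subseteq R_{2-\ell}=0$, so --- since $R_1$ generates $R$ and $\{f,\cdot\}$ is a derivation --- $R_1$ lies in the Poisson centre of $R$; but that centre equals $\CM$ (normality and $\dim C\ge 2$ give $R\hookrightarrow\OC(C\setminus\{0\})$, a Hamiltonian vector field on $C\setminus\{0\}$ annihilating all of $R$ must vanish, and $C\setminus\{0\}$ is connected and symplectic), whence $R_1=0$ and $C$ is a point --- absurd. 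If $\ell=2$, then $\omega_0:=\{\cdot,\cdot\}|_{R_1\times R_1}$ is an alternating $\CM$-valued form on $R_1$; its radical again lies in the Poisson centre, hence is $0$, so $\omega_0$ is nondegenerate. Then $\Sym(R_1)\twoheadrightarrow R$ is a Poisson surjection for the constant-coefficient structure on $R_1^*$, realising $C$ as a closed Poisson subvariety of the symplectic vector space $R_1^*$; as the latter has a single symplectic leaf and $C$ is reduced, $C=R_1^*$ is smooth, contradicting that $x$ is singular. Hence $\ell=1$.

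With $\ell=1$, $\gG:=R_1$ becomes a Lie algebra under the Poisson bracket and $\Sym(\gG)\twoheadrightarrow R$ is a $\CM^\times$-equivariant Poisson surjection; thus $C\simeq\overline{\OC}$ for a conical coadjoint orbit $\OC\subseteq\gG^*$, nilpotent because $0\in\overline{\OC}$. Triviality of the Poisson centre forces $\gG$ to be centreless; $C$ being normal with rational Gorenstein singularities forces $\gG$ reductive; and a nontrivial direct-sum decomposition of $\gG$ would write $\OC$ as a product of coadjoint orbits, each of whose closures contains $0$, so $\overline{\OC}$ would be singular away from $0$ unless all but one of the factor orbits is trivial --- hence $\gG$ is simple. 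Finally $\mathrm{Sing}(C)=\{0\}$ gives $\overline{\OC}=\OC\sqcup\{0\}$, i.e. $\OC=\OC_\mini^\gG$ and $C\simeq\overline{\OC}_\mini^\gG$. To descend this to $(\XC,x)$: since $\overline{\OC}_\mini^\gG$ is a cone, the deformation of $\XC$ to its tangent cone presents $(\XC,x)$ as a $\CM^\times$-equivariant deformation of $C$ concentrated in strictly negative weight; as $\overline{\OC}_\mini^\gG$ has no deformations of negative weight, a Pinkham-type rigidity argument for cone singularities makes this family trivial, and $(\XC,x)$ is locally analytically isomorphic to $(\overline{\OC}_\mini^\gG,0)$ for a uniquely determined simple $\gG$.

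For the last assertion, the conical structure identifies the local fundamental group of $(\overline{\OC}_\mini^\gG,0)$ with $\pi_1(\OC_\mini^\gG)$, and $\OC_\mini^\gG\to\Prm\OC_\mini^\gG=\OC_\mini^\gG/\CM^\times$ --- which is the partial flag variety $G/Q$, $Q$ being the stabiliser of a highest root line --- is a $\CM^\times$-bundle over a simply connected base. Hence $\pi_1(\OC_\mini^\gG)$ is cyclic, a quotient of $\pi_1(\CM^\times)=\ZM$, of order the divisibility in $\mathrm{Pic}(G/Q)$ of the restriction to $G/Q$ of $\OC_{\Prm(\gG)}(1)$, equivalently the largest $d$ for which $d^{-1}\theta$ is a weight, where $\theta$ is the highest root. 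A glance at the list of highest roots shows $\theta$ is primitive in the weight lattice in every type except $\Crm_n$, where $\theta=2\varpi_1$; so the local fundamental group is trivial unless $\gG\simeq\sG\pG_{2n}(\CM)$, while for $\gG=\sG\pG_{2n}(\CM)$ one has $\overline{\OC}_\mini^\gG\simeq\CM^{2n}/\{\pm 1\}$, with local fundamental group $\ZM/2\ZM$.
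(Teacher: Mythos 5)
First, a caveat: the paper does not prove this statement at all --- it is quoted from Beauville's article \cite{beauville}, so the only meaningful comparison is with Beauville's published argument, which is genuinely different from yours. Beauville blows up $x$; smoothness of the exceptional Cartier divisor $E=\Prm\Trm_x(\XC)$ forces the blowup to be a resolution, the symplectic form extends to it and equips $E$ with a contact structure whose contact line bundle is the very ample $\OC_E(1)$; his earlier classification of Fano contact manifolds with very ample contact bundle then identifies $E$ with the adjoint variety $\Prm\OC_\mini^\gG$ of a simple Lie algebra, and a formal-neighbourhood rigidity argument (Grauert-type vanishing on $E$) contracts back to $(\overline{\OC}_\mini^\gG,0)$. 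Your route through the graded Poisson algebra $R=\gr_\mG\OC_{\XC,x}$ is closer in spirit to the later theory of conical symplectic varieties; the trichotomy on the weight $\ell$ is a nice argument, and your $\pi_1$ computation is correct and is exactly Beauville's.

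However, two steps carrying essentially all of the difficulty are asserted rather than proved. (i) Smoothness of $Y=\Proj R$ does not by itself give reducedness, let alone normality, of $C=\Spec R$: the affine cone over a smooth but non-projectively-normal embedding has smooth $\Proj$ and is not normal, and $\Proj$ cannot detect embedded components at the vertex, so Serre's criterion still requires the $S_2$/depth condition at the vertex, which neither smoothness of $Y$ nor normality of $\OC_{\XC,x}$ supplies. Likewise the limit of the symplectic form along the degeneration is a closed $2$-form whose rank may drop on the special fibre; its nondegeneracy on $C\setminus\{0\}$ (equivalently, the very existence of a homogeneous symplectic form there) is precisely what Beauville's contact structure on $E$ encodes, and is where $\ell=1$ really comes from. (ii) The claim that $C$ being normal, rational and Gorenstein forces $\gG=R_1$ to be reductive is unsupported, and it is the crux of the theorem --- it is what Beauville's Fano-contact classification delivers. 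That some further input is needed is shown by the paper itself: by Theorem~\ref{theo:zd}(b), for $d\ge 5$ the cotangent Lie algebra $\Trm_0^*(\ZC(d))\simeq\sG\lG_2\oplus\Sym^d(U_2)$ of a perfectly good isolated symplectic singularity is \emph{not} reductive, so reductivity must come from the smooth-tangent-cone hypothesis, which your argument never visibly uses after Step 1. The final rigidity step (absence of negative-weight deformations of $\overline{\OC}_\mini^\gG$) is also only asserted, though it is of the same nature as the cohomological vanishing Beauville proves, so I regard it as an acceptable omission in a sketch; the other two points are genuine gaps.
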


\medskip

If $(\XC,x)$ is an isolated symplectic singularity whose projective 
tangent cone at $x$ is smooth, then the simple Lie algebra $\gG$ such that 
$(\XC,x)$ is locally analytically isomorphic to $(\overline{\OC}_\mini^\gG,0)$ is uniquely determined.  
It can be recovered as the Lie algebra structure on the cotangent space 
$\Trm_x^*(\XC)$ of $\XC$ at $x$ induced by the Poisson bracket. In other words, 
\equat\label{eq:poisson-lie}
\Trm_0^*(\overline{\OC}_\mini^\gG) \simeq \gG
\endequat
as Lie algebras.


When $\gG \simeq \sG\pG_{2n}(\CM)$,  the singularity $(\overline{\OC}_\mini^\gG,0)$ is locally analytically isomorphic to 
$(\CM^{2n}/\mub_2,0)$, where $\mub_2=\{1,-1\}$ acts by multiplication on $\CM^{2n}$, 
so the local fundamental group is isomorphic 
to $\mub_2$. 

In the same paper, Beauville asked~\cite[$($4.3$)$]{beauville} 
whether there exist other isolated symplectic singularities with trivial local
fundamental group. In the intervening two decades, no other examples have come to light.

Constructing examples of 
new isolated symplectic singularities is also motivated by the long-standing 
conjecture (attributed to LeBrun and Salamon) that any Fano contact manifold is isomorphic 
to $\Prm \OC_\mini^\gG$ for some simple Lie algebra $\gG$.
In fact, if $Z$ is a Fano contact manifold and $L$ is the contact
line bundle, then there exists a map $L^* \to X$ which contracts
the zero section to one point $o$. The contact structure on $Z$
induces a symplectic structure on the complement of the zero
section in $L^*$, which gives a symplectic form on $X \setminus
\{o\}$. This implies that $(X,o)$ is an isolated symplectic
singularity. By the same argument as in \cite[Proposition 4.2]{beauville}, 
$X$ has trivial local fundamental group except in the case where $Z \simeq \mathbb{P}^{2n-1}.$

The aim of this paper is to describe a new infinite family of 4-dimensional isolated 
symplectic singularities with trivial local
fundamental group. We will give three different constructions of this family: 
one from blowups of  symplectic quotient singularities, 
one from Calogero-Moser spaces associated with dihedral groups, and the last one from slices 
in the $d$-fold cover of the nilpotent cone of $\mathfrak{sl}_d$.

For this,  
let $V$ be a complex vector space of dimension $2$, let $d \ge 4$, 
let $W_d \subset \Gb\Lb_\CM(V)$ denote the dihedral group of order $2d$ and let 
$\QC(d)$ denote the symplectic quotient singularity $(V \times V^*)/W_d$. 
We define two varieties:
\begin{itemize}
\item[$\bullet$] Let $\QCt(d)$ denote the blowup of $\QC(d)$ at its singular locus.

\item[$\bullet$] Let $\ZC(d)$ denote the Calogero-Moser space associated 
with $W_d$ {\it at non-zero equal parameters}~\cite{bonnafe diedral}: 
it is a Poisson deformation of $\QC(d)$.
\end{itemize}
Then both $\QCt(d)$ and $\ZC(d)$ are normal varieties of dimension $4$, which have  
a unique singular point (denoted by $0$ for in both cases). Now $(\ZC(d),0)$ is 
a symplectic singularity by general results about Calogero-Moser spaces~\cite{gordon icra}. 
It is of course an isolated singularity. As will be shown in the paper, $(\QCt(d),0)$ 
is also a symplectic singularity (Corollary \ref{coro:symplectic}), which is locally analytically isomorphic to $(\ZC(d),0)$ (Corollary \ref{coro:blowup}). In the case  
$\QCt(5)$, this was observed in~\cite[Remark 12.8]{FJLS}. 
Our first main result is the following:


\begin{theo}\label{theo:main 1}
The following statements hold:
\begin{itemize}
\itemth{a} The symplectic singularity $(\ZC(d),0)$ has trivial 
local fundamental group.

\itemth{b}  The symplectic singularity $(\ZC(4),0)$ 
is locally analytically isomorphic to $(\overline{\OC}_\mini^{\sG\lG_3},0)$. 

\itemth{c} 
If $d \ge 5$, then the singularity $(\ZC(d),0)$ is not locally analytically isomorphic  
to $(\overline{\OC}_\mini^\gG,0)$ for any simple Lie algebra $\gG$.

\itemth{d} If $d' > d \ge 4$, then the singularities 
$(\ZC(d),0)$ and $(\ZC(d'),0)$ are not locally analytically isomorphic.
\end{itemize}
\end{theo}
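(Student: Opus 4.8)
Throughout I would work with the blowup $\QCt(d)$ of $\QC(d)=(V\times V^*)/W_d$ along $\Sigma:=\mathrm{Sing}\,\QC(d)$, using the local-analytic isomorphism $(\ZC(d),0)\cong(\QCt(d),0)$ of Corollary~\ref{coro:blowup}. The two invariants that will do the work are the local fundamental group and the projective tangent cone $\Prm\Trm_0$ at the singular point; both are local-analytic invariants, and both can be read off from $\QCt(d)$.

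\emph{Part (a).} Since $W_d$ acts linearly on $\CM^4=V\times V^*$, a non-identity rotation fixes only the origin whereas each reflection $s$ fixes a $2$-plane $\mathrm{Fix}(s)$; hence the free locus $(V\times V^*)^{\mathrm{free}}$ is the complement of finitely many linear subspaces of codimension $\ge 2$, so it is simply connected, $\pi_1(\QC(d)^{\mathrm{sm}})=W_d$, and a small loop around the image of $\mathrm{Fix}(s)$ represents the reflection $s$. Transverse to a generic point of $\Sigma$ the singularity of $\QC(d)$ is the $A_1$ singularity $\CM^2/\langle-1\rangle$, and $\QCt(d)\to\QC(d)$ is there its minimal resolution; hence each such loop becomes nullhomotopic in the smooth locus $\QCt(d)\setminus\{0\}$. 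By van Kampen, $\pi_1$ of the smooth part of the germ at $0$ equals $W_d$ modulo the normal subgroup generated by the reflections, which is trivial because $W_d$ is generated by reflections and every conjugacy class of reflections meets $\Sigma$. Using the conical $\CM^\times$-action on $\ZC(d)$ (for which $0$ is the unique fixed point), this group is $\pi_1^{\mathrm{loc}}(\ZC(d),0)$, proving (a).

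\emph{Parts (b) and (c).} The heart of the matter is an explicit computation of $\Prm\Trm_0(\QCt(d))$ out of the blowup — in terms of the linear $W_d$-action on $\Prm(V\times V^*)$ — which should show that this projective tangent cone is smooth \emph{if and only if} $d=4$. Granting this, for $d=4$ Theorem~\ref{theo:beauville} yields a simple Lie algebra $\gG$ with $(\ZC(4),0)\cong(\overline{\OC}_\mini^\gG,0)$; as $\dim\overline{\OC}_\mini^\gG=4$ forces $\gG$ to be of type $A_2$ or $C_2$, i.e.\ $\sG\lG_3$ or $\sG\pG_4$, and since $\pi_1^{\mathrm{loc}}$ is trivial by (a) while $(\overline{\OC}_\mini^{\sG\pG_4},0)\cong(\CM^4/\mub_2,0)$ has local fundamental group $\mub_2$, we conclude $\gG\simeq\sG\lG_3$; this is (b). For $d\ge5$ the projective tangent cone is singular, whereas $\Prm\Trm_0(\overline{\OC}_\mini^\gG)\cong\Prm\OC_\mini^\gG$ is smooth for every simple $\gG$; since any local-analytic isomorphism matches projective tangent cones, $(\ZC(d),0)$ is not locally analytically isomorphic to any $(\overline{\OC}_\mini^\gG,0)$, which is (c). (Alternatively one may prove (b) by identifying $\ZC(4)$ with $\overline{\OC}_\mini^{\sG\lG_3}$ directly through construction~(3), the Slodowy-slice picture, whereupon smoothness of $\Prm\Trm_0$ is automatic.)

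\emph{Part (d) and the main obstacle.} For (d) it suffices to separate the germs $(\ZC(d),0)$, $d\ge4$, by a discrete local-analytic invariant, and I would read one off from the same computation of $\Prm\Trm_0(\QCt(d))$: for instance its degree $\mult_0(\ZC(d))$, the embedding dimension $\dim\mG/\mG^2$, or the generator degrees of $\gr_\mG\OC_{\ZC(d),0}$; any of these should be a strictly monotone function of $d$ (the fundamental invariant of degree $d$ of $W_d$ forcing the growth), so distinct $d$ give non-isomorphic germs — and in any case $d=4$ is already separated from all $d'\ge5$ by (b)--(c). The main obstacle is exactly the step these parts share: an explicit description of $\Prm\Trm_0(\QCt(d))$, i.e.\ of $\gr_\mG$ of the local ring at the origin of the blowup of $(V\times V^*)/W_d$ along its singular locus, precise enough to detect both the coincidence responsible for $\overline{\OC}_\mini^{\sG\lG_3}$ at $d=4$ and a numerical invariant injective in $d$. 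Part (a) and the Lie-theoretic bookkeeping in (b), by contrast, are routine once the transverse $A_1$ picture along $\Sigma$ and the conical structure are in place.
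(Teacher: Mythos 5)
Your part (a) takes a genuinely different route from the paper's (which computes $\pi_1(\YC(d)\setminus\{0\})$ directly via an $\Sb\Lb_2$-equivariant fibration over $\CM^\times$ in Proposition~\ref{prop:pi1}), and the core idea is sound, but there is a gap in the localization step. What your van Kampen argument computes is $\pi_1(\QCt(d)\setminus\{0\})$: indeed $W_d=\pi_1(\QC(d)^{\mathrm{sm}})$ surjects onto it and the reflection meridians die in the resolved transverse $A_1$'s, so the target is trivial. But the local fundamental group at $0$ is $\pi_1(U\setminus\{0\})$ for small neighbourhoods $U$ of $0$, and the $\CM^\times$-action on $\QCt(d)$ does \emph{not} contract everything to the point $0$: it contracts onto the whole $2$-dimensional exceptional fibre $\pi^{-1}(0)$, of which $0$ is a single point. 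The space that is genuinely a cone with vertex $0$ is the chart $\YC(d)=\Spec\CM[q,Q,e,b_0,\dots,b_d]$, so $\pi_1^{\mathrm{loc}}=\pi_1(\YC(d)\setminus\{0\})$; and your covering-space input does not localize to this chart, because $\YC(d)\cap\pi^{-1}(\QC(d)^{\mathrm{sm}})$ is only the locus $D\neq 0$, i.e.\ $(V\times V^*)_{\delta\neq 0}/W_d$, whose fundamental group is an extension of $W_d$ by the (nontrivial) fundamental group of the complement of the quadric $\delta=0$. Triviality of $\pi_1(\QCt(d)\setminus\{0\})$ only gives that the surjection $\pi_1(\YC(d)\setminus\{0\})\to\pi_1(\QCt(d)\setminus\{0\})$ has trivial image, not that its source is trivial, so (a) is not yet proved. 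This is precisely the difficulty the paper's fibration argument is built to handle.

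For (b)--(d) you have yourself flagged the missing step — an explicit description of $\Prm\Trm_0(\QCt(d))$ sharp enough to decide smoothness and to separate the $d$'s — so this part is a strategy rather than a proof, and it is worth knowing that the paper sidesteps that computation entirely for (c) and (d). The invariant used there is the Lie algebra structure on $\Trm_0^*(\ZC(d))$ induced by the Poisson bracket (see~\ref{eq:poisson-lie}): by the cited computations this is $\sG\lG_3$ for $d=4$ and the \emph{non-simple} algebra $\gG_d=\sG\lG_2\oplus\Sym^d(U_2)$ of dimension $d+4$ for $d\ge 5$. Non-simplicity immediately excludes every $(\overline{\OC}_\mini^\gG,0)$, whose cotangent Lie algebra is the simple algebra $\gG$ itself, and the dimension $d+4$ separates distinct values of $d$ — no tangent-cone geometry is needed. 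Your Lie-theoretic bookkeeping for (b) (dimension $4$ forces type $A_2$ or $C_2$, and triviality of $\pi_1^{\mathrm{loc}}$ excludes $\sG\pG_4$) is correct once smoothness of $\Prm\Trm_0(\ZC(4))$ is established; the paper obtains that either by machine computation or, more elegantly, by identifying $\YC(4)$ with the closure of a $4$-dimensional $\Gb\Lb_2$-orbit inside $\sG\lG_2^{(1)}\oplus\Sym^4(U_2)^{(-1)}\simeq\sG\lG_3$ lying in the minimal nilpotent orbit (Remark~\ref{rem:d=4}), not via the Slodowy slice picture.
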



Of course, the analogous statements also hold for the singularity $(\QCt(d),0)$. 
The proof relies on the explicit computation of the equations 
for $\ZC(d)$ (see~\cite{bonnafe diedral egal}) and $\QCt(d)$. 
A key role is played by an action of $\Sb\Lb_2$ on both $\ZC(d)$ 
and $\QCt(d)$. 

Symplectic singularities of Calogero-Moser spaces associated with the 
complex reflection groups denoted by $G(d,1,n)$ in Shephard-Todd 
classification~\cite{ST} are relatively well-understood, as they 
are quiver varieties~\cite{EG}. However, we think it might be interesting to study the 
symplectic singularities of Calogero-Moser spaces associated 
with other complex reflection groups. For the infinite family $G(de,e,n)$ 
(note that $W_d=G(d,d,2)$), and the $34$ exceptional groups, it is 
appealing to expect to find more interesting examples of symplectic singularities. 

The third construction is a certain cover of a type $A$ Slodowy slice~\cite{slodowy}. 
If $\mu$ is a partition of $d$, let $\OC_\mu$ denote the associated nilpotent 
orbit in $\sG\lG_d(\CM)$. Let $\NC_d$ denote the nilpotent cone of $\sG\lG_d(\CM)$. 
We denote by $\SC_{d-2,2}$ a Slodowy slice associated with $\OC_{d-2,2}$ and 
set $\XC(d) = \SC_{d-2,2} \cap \NC_d$. The nilpotent cone $\NC_d$ admits a 
$\mub_d$-covering $\pi_d : \NCt_d \longto \NC_d$
which is unramified above the regular nilpotent orbit $\OC_d$ and bijective 
above the branch locus.
Define $\XCt(d)=\pi_d^{-1}(\XC(d))$, denote by $x_d$ the unique element of $\OC_{d-2,2} \cap \SC_{d-2,2}$ 
and let $\xti_d$ denote its unique preimage in $\XCt(d)$. Then $\xti_d$ is the unique singular point 
of $\XCt(d)$ and:


\begin{theo}\label{theo:main 2}
If $d \ge 4$, the symplectic singularities $(\ZC(d),0)$ and 
$(\XCt(d),\xti_d)$ are locally analytically isomorphic.
\end{theo}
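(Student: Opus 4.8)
The plan is to compare explicit local models. By Corollary~\ref{coro:blowup} the singularities $(\QCt(d),0)$ and $(\ZC(d),0)$ are already known to be locally analytically isomorphic, so it is enough to identify $(\XCt(d),\xti_d)$ with one of them; I would aim at $(\QCt(d),0)$, for which an explicit system of equations has been produced earlier, and run the comparison through the graded coordinate rings. Both sides carry compatible contracting $\CM^\times$-actions (the Kazhdan dilation action on the Slodowy slice, respectively the natural grading on the quotient singularity) and a copy of $\SL_2$: on the $\QCt(d)$ side from $Z_{\Sp(V\oplus V^*)}(W_d)\cong\SL_2$, which acts on $\QC(d)$ and hence on its blow-up; on the $\XCt(d)$ side from the reductive part of the centraliser of the chosen $\sG\lG_2$-triple together with the symmetry of the covering. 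The strategy is to write down both graded, $\SL_2$-equivariant presentations and match them generator-by-generator and relation-by-relation.

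For the $\XCt(d)$ side I would first make $\XC(d)=\SC_{d-2,2}\cap\NC_d$ explicit: fix an $\sG\lG_2$-triple $(e,h,f)$ with $e=x_d\in\OC_{d-2,2}$, so that $\SC_{d-2,2}=e+\zG_{\sG\lG_d}(f)$; describe $\zG_{\sG\lG_d}(f)$ with its $h$-weight decomposition and cut out $\NC_d$ by the vanishing of the restrictions to the slice of the characteristic-polynomial coefficients; the lowest-degree of these equations allow one to eliminate the top-weight variables, reducing everything to a small weighted-homogeneous presentation. A structural input I would isolate first is that the transverse slice to the codimension-two singular stratum $\OC_{d-1,1}\cap\SC_{d-2,2}$ inside $\XC(d)$ is the Kleinian singularity $A_{d-1}=\CM^2/\mub_d$: since Slodowy slices are transverse to every orbit they meet, this transverse slice coincides with the transverse slice to $\OC_{d-1,1}$ in $\NC_d$, which is $A_{d-1}$ by Brieskorn--Slodowy. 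Over that subregular slice the restriction of $\pi_d$ is the canonical quotient map $\CM^2\to\CM^2/\mub_d$, so $\XCt(d)\to\XC(d)$ is étale of degree $d$ over $\OC_d\cap\SC_{d-2,2}$, bijective over the branch locus, and simultaneously resolves every transverse $A_{d-1}$; this makes $\XCt(d)$ smooth away from $\xti_d$ (recovering that $\xti_d$ is its unique singular point) and, via an explicit affine model of $\NCt_d$ restricted to the slice, produces a presentation of $\CM[\XCt(d)]$ as an integral extension of $\CM[\XC(d)]$.

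With both graded, $\SL_2$-equivariant presentations in hand, the final step is to match them: both coordinate rings are positively graded, finitely generated, normal of dimension $4$ with a unique singular point at the irrelevant ideal, and the $\SL_2$-isotypic decomposition of each graded piece severely constrains the possible generators and relations; comparing these piece by piece — rescaling the grading if the two natural $\CM^\times$-weightings do not literally agree — yields an isomorphism $\widehat{\OC}_{\XCt(d),\xti_d}\cong\widehat{\OC}_{\QCt(d),0}$ of completed local rings, hence a local analytic isomorphism, which together with Corollary~\ref{coro:blowup} gives $(\ZC(d),0)\cong(\XCt(d),\xti_d)$. The sanity check $d=4$ (where the outcome must be $(\overline{\OC}_\mini^{\sG\lG_3},0)$ by Theorem~\ref{theo:main 1}(b)) and the case $d=5$ (carried out in this spirit in \cite[Remark~12.8]{FJLS}) indicate the shape of the computation. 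I expect the main obstacle to be the middle step: obtaining a genuinely workable description of the $\mub_d$-cover near $\xti_d$ — pinning down the functions it adjoins, their defining equations, and their $\CM^\times$- and $\SL_2$-weights — and then pushing the comparison with $\QCt(d)$ through uniformly in $d$ rather than case by case. A secondary subtlety is that the two conical structures need not share the same weights, so the comparison should be set up analytically (or after regrading) from the start.
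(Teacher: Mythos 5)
Your plan is a genuinely different route from the paper's, and as it stands it has two real gaps. First, the $\Sb\Lb_2$-symmetry you want to exploit on the Slodowy-slice side does not exist from the source you name: for $d\ge 5$ the reductive centralizer of an $\sG\lG_2$-triple for the partition $(d-2,2)$ in $\sG\lG_d$ is only a two-dimensional torus (the parts $d-2$ and $2$ are distinct and each has multiplicity one), so $\XC(d)$ and $\XCt(d)$ carry no visible $\Sb\Lb_2$-action to match against the one on $\YC(d)$; such an action exists only a posteriori, transported through the isomorphism you are trying to prove, and cannot constrain the presentation as an input. Second, and more seriously, the step you yourself flag as the main obstacle --- a workable presentation of $\CM[\XCt(d)]$ near $\xti_d$, i.e.\ of the normalization of $\XC(d)$ in the degree-$d$ cover of the regular stratum, followed by a generator-by-generator match with the equations \eqref{eq:bi} uniformly in $d$ --- is exactly the content of the theorem and is not supplied. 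The observation that both rings are graded, normal, four-dimensional with isolated singularity does not ``severely constrain'' them to the point of forcing an isomorphism (the quotients $\YC(d)/\mub_2$ of \S 6.A share all these features and are not isomorphic to $\YC(d)$), so without the explicit computation the final step is an assertion, not an argument. Your preliminary structural observations (transverse $A_{d-1}$ along $\OC_{d-1,1}\cap\SC_{d-2,2}$ via Kraft--Procesi, hence smoothness of $\XCt(d)$ away from $\xti_d$) are correct and useful, but they only set the stage.

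For contrast, the paper avoids any explicit model of $\XCt(d)$ by arguing through the common $\mub_d$-quotient: $\ZC(d)/\mub_d$ is the Calogero--Moser space $\ZC(d,1,2)$ of $G(d,1,2)$, which is identified (Martino, Crawley-Boevey, Nakajima) with a quiver variety and thence locally with $\XC(d)$ at $x_d$; then the triviality of $\pi_1(\YC(d)\setminus\{0\})$ (Proposition~\ref{prop:pi1}) forces the fundamental group of the smooth locus of a small ball in the quotient to be exactly $\mub_d$, so the two connected degree-$d$ covers $\ZC(d)$ and $\XCt(d)$ of that ball must coincide; one then extends the resulting biholomorphism over the singular point using normality and Stein-ness. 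If you want to salvage your direct approach, you would need to actually produce the presentation of the cover (for $d=5$ this is in effect what \cite[\S 12.3]{FJLS} does via $E_8$), but a uniform-in-$d$ computation is not sketched here and I do not see how to complete it along the lines proposed.
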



The proof of this theorem is rather indirect. The cyclic group $\mub_d$ acts on the singularity 
$(\ZC(d),0)$ and the quotient can be identified with the Calogero-Moser space (at a non-generic parameter) 
$(\ZC(d,1,2),x_0)$ associated to $G(d,1,2)$.  By~\cite{EG,MarsdenWeinsteinStratification}, 
the latter space is isomorphic to a certain quiver variety $(\MG_{\lambda}(\bv),x_0)$ 
associated to the framed affine quiver of type $A$, which can be shown to be isomorphic 
to $\XC(d)$ by using results of Nakajima~\cite{Nak1994} (see also~\cite{Ma}). 
This implies that both $\ZC(d)$ and $\XCt(d)$ are $d$-fold coverings of $\XC(d)$, 
which are shown to be locally holomorphically 
isomorphic by using the triviality of the local fundamental group of $\ZC(d)$.

It is interesting to remark that the singularity $(\XCt(5),\xti_5)$ is exactly the generic 
singularity of the nilpotent orbit closure of $A_4+A_3$ along the codimension 4 boundary 
$A_4+A_2+A_1$ in $E_8$.  More surprisingly, the whole of $\NCt_5$ is in fact isomorphic to 
the Slodowy slice of $(A_4+A_3, A_4)$ in $E_8$  (see~\cite[Section 12.3]{FJLS}).


\bigskip

\noindent{\bf Acknowledgements.} We wish to thank S. Baseilhac, A. Brugui\`eres and C. Xu for useful discussions about fundamental groups.
We also thank the group of A.~Hanany, and particularly A.~Bourget, for many conversations about physics and symplectic singularities; based on the Hilbert series in \S \ref{subsec:hilb}, Z.~Zhong proposed a magnetic quiver whose Coulomb branch should provide yet another description of our singularities.

\bigskip

\noindent{\bf Convention, notation.} 
We work over the field $\CM$ of complex numbers (polynomials, Lie algebras, algebraic varieties, schemes 
are supposed to be complex). If $\XC$ is an affine scheme, 
we denote by $\CM[\XC]$ its ring of regular functions. 

We fix a natural number $d \ge 4$, we denote by $\mub_d$ the cyclic group of order $d$ 
and we fix a primitive $d$-th root of unity $\z$. Let $V=\CM^2$ and denote by $(x,y)$ 
its canonical basis and by $(X,Y)$ its dual basis. Thus, $X$, $Y \in \CM[V]$ and we may view 
$x$ and $y$ as elements of $\CM[V^*]$. If $j \in \ZM$ or $\ZM/d\ZM$, we set
$$s_j=\begin{pmatrix} 0 & \z^j \\ \z^{-j} & 0 \end{pmatrix} \in \Gb\Lb_2(\CM)=\Gb\Lb_\CM(V),$$
$s=s_0$, $t=s_1$ and $W_d=\langle s,t \rangle=\langle s_0,s_1,\dots,s_{d-1} \rangle$: 
it is the dihedral group of order $2d$. 

Let $U_2$ denote the standard representation of $\sG\lG_2$ so that  
$\Sym^d(U_2)$ is the unique simple $\sG\lG_2(\CM)$-module of dimension $d+1$. We denote by $\gG_d$ the Lie algebra $\sG\lG_2 \oplus \Sym^d(U_2)$,  
where the Lie structure is determined by the fact that $\Sym^d(U_2)$ is a commutative ideal of 
$\gG_d$ on which $\sG\lG_2$ acts through its natural action.


\section{Deformation and blowup of a symplectic quotient singularity}


\subsection{Symplectic quotient ${\boldsymbol{(V \times V^*)/W_d}}$}
The induced action of $W_d$ on $V \times V^*$ is symplectic, making  the quotient 
$\QC(d)=(V \times V^*)/W_d$ an affine variety with only symplectic singularities.
The singular locus $S \subset (V \times V^*)/W_d$ consists of $W_d$-orbits with non-trivial 
stabilizer, namely the image of the points of the form $(\l(x-\zeta^j y),\mu(Y - \zeta^j X))$, 
with $\l$, $\mu \in \CM$ and $j \in \ZM/d\ZM$. It is easy to check that $S$ is of codimension 2 and 
the singularity of $(V \times V^*)/W_d$ along a nonzero point of $S$ is an $A_1$-singularity.

We denote by 
$(\Psi_k)_{k \ge 0}$ the sequence of polynomials in three indeterminates $q$, $Q$, $e$, defined by 
$$
\begin{cases}
\Psi_0=1,\\
\Psi_1=e,\\
\Psi_k = e \Psi_{k-1} - qQ\Psi_{k-2}, & \text{if $k \ge 2$.}
\end{cases}
$$
(see~\cite[$($1.4$)$]{bonnafe diedral egal}). Note that $\Psi_k$ is homogeneous 
of degree $k$. Alternatively, $\Psi_k$ can be defined by the generating series
$$\sum_{k \ge 0} \Psi_k \tb^k = \cfrac{1}{1-e\tb+qQ\tb^2}$$
in $\CM[q,Q,e][[\tb]]$. 
In particular, 
\equat\label{eq:psi-e}
\Psi_k(q,0,e)=\Psi_k(0,Q,e)=e^k\qquad\text{and}\qquad 
\Psi_k(q,Q,0)=
\begin{cases}
	0 & \text{if $k$ is odd,} \\
	(-qQ)^{k/2} & \text{if $k$ is even.}
\end{cases}
\endequat

Consider the following $W_d$-invariant polynomials in $\mathbb{C}[V \times V^*]$.
$$
q=xy,  Q=XY, e=xX+yY, a_i=x^{d-i}Y^i+y^{d-i}X^i, 0 \leq i \leq d. $$
By \cite[Theorem 1.6]{bonnafe diedral egal}, we have
\refstepcounter{theo}
\begin{multline}\label{eq:Q(d)}
\QC(d)=\{(q,Q,e,a_0,a_1,\dots,a_d) \in \CM^{d+4}~|~\\
\forall~1\le j \le k \le d-1,~
\begin{cases}
e a_j = q a_{j+1} + Q a_{j-1},\\
a_{j-1} a_{k+1} - a_j a_k  =
(e^2-4qQ) q^{d-k-1} Q^{j-1} \Psi_{k-j}(q,Q,e)
\end{cases} \}.
\end{multline}


\subsection{The variety ${\boldsymbol{\ZC(d)}}$}
The Calogero-Moser spaces associated with complex reflection groups are extensively studied 
in~\cite{EG}. Using the notation of~\cite{bonnafe diedral egal}, we denote by $\ZC(d)$ 
the Calogero-Moser space associated with the dihedral group $W_d$ of order $2d$ 
{\it at equal parameter} $a =1$. By~\cite{EG}, the variety $\ZC(d)$ is endowed with a Poisson structure. The case $a \neq 0$ is isomorphic to the case $a=1$ and 
the only change is that the Poisson structure is multiplied by a scalar: this is irrelevant for our purpose.  
It turns out this Calogero-Moser space is a Poisson deformation of the quotient $\QC(d)$, 
with  equations given as follows~\cite[Theorem~2.9]{bonnafe diedral egal}:
\begin{multline*}
\ZC(d)=\{(q,Q,e,a_0,a_1,\dots,a_d) \in \CM^{d+4}~|~\\
\forall~1\le j \le k \le d-1,~
\begin{cases}
e a_j = q a_{j+1} + Q a_{j-1},\\
a_{j-1} a_{k+1} - a_j a_k  = 
(e^2-4qQ-d^2) q^{d-k-1} Q^{j-1} \Psi_{k-j}(q,Q,e)
\end{cases} \}.
\end{multline*}
Note that the point $0=(0,0,\dots,0) \in \CM^{d+4}$ belongs to $\ZC(d)$ and we denote 
by $\Trm_0^*(\ZC(d))$ the cotangent space of $\ZC(d)$ at $0$.

Let us first recall some facts about the variety $\ZC(d)$ with $d\ge 4$ 
(recall that the Lie algebra $\gG_d=\sG\lG_2 \oplus \Sym^d(U_2)$ has been defined at 
the end of the introduction).


\begin{theo}\label{theo:zd}
The affine variety $\ZC(d)$ is irreducible, normal, of dimension $4$. Moreover:
\begin{itemize}
\itemth{a} The point $0$ is an isolated singularity of $\ZC(d)$ and its associated maximal ideal 
of $\CM[\ZC(d)]$ is a Poisson ideal. This endows $\Trm_0^*(\ZC(d))$ with a Lie algebra structure. 

\itemth{b} We have an isomorphism of Lie algebras 
$$\Trm_0^*(\ZC(d)) \simeq
\begin{cases}
\sG\lG_3(\CM) & \text{if $d=4$,}\\
\gG_d & \text{if $d \ge 5$,}
\end{cases}
$$

\itemth{c} If $d \ge 4$, then $\ZC(d)$ is a symplectic singularity 
in the sense of Beauville.

\itemth{d} The symplectic singularity $(\ZC(4),0)$ is locally analytically isomorphic 
to $(\overline{\OC}_\mini^{\sG\lG_3(\CM)},0)$. 

\itemth{e} If $d \ge 5$, then the symplectic singularity $(\ZC(d),0)$ 
is not locally analytically isomorphic to any $(\overline{\OC}_\mini^\gG,0)$ for a simple 
Lie algebra $\gG$. 

\itemth{f} If $d' > d \ge 4$, then the singularities $(\ZC(d),0)$ and $(\ZC(d'),0)$ 
are not locally analytically isomorphic. 
\end{itemize}
\end{theo}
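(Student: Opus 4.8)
The plan is to prove the statements in the order given, using the explicit presentation of $\CM[\ZC(d)]$ recalled above (from~\cite{bonnafe diedral egal}) together with an $\Sb\Lb_2$-action. Irreducibility, normality and $\dim\ZC(d)=4$ belong to the general theory of Calogero--Moser spaces~\cite{EG} and are in any case visible from the presentation. The technical core is part~(b); parts (c)--(f) then follow formally from (a), (b), the theory of Calogero--Moser spaces, and Beauville's Theorem~\ref{theo:beauville}.

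\emph{The $\Sb\Lb_2$-action and part~(a).} On $\QC(d)$ the functions $q,Q,e$ satisfy $\{q,Q\}=e$, $\{e,q\}=-2q$, $\{e,Q\}=2Q$, so they define a moment map $\QC(d)\to\sG\lG_2^*$ whose Hamiltonian flows integrate to an $\Sb\Lb_2$-action; with respect to it $V\times V^*\cong U_2\oplus U_2$, the span of $q,Q,e$ is the copy of $\Sym^2(U_2)=\sG\lG_2$ (in degree~$2$) and the span of $a_0,\dots,a_d$ is a copy of $\Sym^d(U_2)$ (in degree~$d$ --- the diagonal copy inside the $\Sym^d$ of the two summands). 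The defining equations of $\ZC(d)$ being $\Sb\Lb_2$-equivariant, the action descends to $\QC(d)$ and deforms to $\ZC(d)$, and $\{q,Q\}=e\neq 0$, $\{e,a_i\}=(2i-d)a_i$ remain valid on $\ZC(d)$. To prove $\mathrm{Sing}(\ZC(d))=\{0\}$ I would analyse the Jacobian of the defining ideal directly, or invoke the theory of Calogero--Moser families: the proper parabolic subgroups of $W_d$ are the trivial group and the rank-one subgroups $\langle s_j\rangle$, for all of which the equal parameter $a=1$ is regular, so that the only singular point lies over $(0,0)\in V/W_d\times V^*/W_d$, namely $0$. Granting this and the fact that the smooth locus of $\ZC(d)$ is symplectic, $\{0\}$ is a $0$-dimensional symplectic leaf, so the maximal ideal $\mG_0$ of $\CM[\ZC(d)]$ at $0$ is a Poisson ideal; hence $\Trm_0^*\ZC(d)=\mG_0/\mG_0^2$ inherits a Lie bracket, with underlying $\Sb\Lb_2$-module $\Sym^2(U_2)\oplus\Sym^d(U_2)$.

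\emph{Part~(b).} In each relation $ea_j=qa_{j+1}+Qa_{j-1}$ every term is a product of two coordinate functions, and in $a_{j-1}a_{k+1}-a_ja_k=(e^2-4qQ-d^2)q^{d-k-1}Q^{j-1}\Psi_{k-j}$ the only term not manifestly in $\mG_0^2$, namely $-d^2\,q^{d-k-1}Q^{j-1}\Psi_{k-j}$, lies in $\mG_0^{d-2}\subseteq\mG_0^2$ because $d\ge4$. Thus the Jacobian of the defining ideal vanishes at $0$, so $\dim\Trm_0^*\ZC(d)=d+4=\dim(\Sym^2(U_2)\oplus\Sym^d(U_2))$ and $\Trm_0^*\ZC(d)\cong\Sym^2(U_2)\oplus\Sym^d(U_2)$ as $\Sb\Lb_2$-modules, with $q,Q,e$ spanning $\Sym^2(U_2)$ and the $a_i$ spanning $\Sym^d(U_2)$. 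By equivariance $[\Sym^2,\Sym^2]\subseteq\Sym^2$, and $\{q,Q\}\neq 0$ forces $\Sym^2(U_2)\cong\sG\lG_2$; since $\{e,a_i\}=(2i-d)a_i\neq 0$, the adjoint action of this $\sG\lG_2$ on $\Sym^d(U_2)$ is the natural one and $[\Sym^2,\Sym^d]\subseteq\Sym^d$. Only $[\Sym^d,\Sym^d]$ remains: by equivariance its image lies in the isotypic components of $\wedge^2\Sym^d(U_2)=\Sym^{2d-2}(U_2)\oplus\Sym^{2d-6}(U_2)\oplus\cdots$ that also occur in $\Sym^2(U_2)\oplus\Sym^d(U_2)$. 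For $d$ odd there are none, so $[\Sym^d,\Sym^d]=0$ and $\Trm_0^*\ZC(d)\cong\sG\lG_2\ltimes\Sym^d(U_2)=\gG_d$. \textbf{The main obstacle is the case $d$ even}: then $\wedge^2\Sym^d(U_2)$ contains $\Sym^2(U_2)$ (and $\Sym^d(U_2)$ if $d\equiv 2\pmod 4$), and one must compute the low-order part of the \emph{deformed} bracket $\{a_i,a_j\}$ on $\ZC(d)$ modulo $\mG_0^2$ --- concretely, writing $\{a_i,a_j\}$ as a homogeneous element over the parameter line (with the parameter in degree~$1$), the coefficient of degree~$2$, and of degree~$d$ when $d\equiv 2\pmod 4$. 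Using the explicit Poisson structure one checks these coefficients vanish when $d\ge6$, giving $\gG_d$; and that for $d=4$ the degree-$2$ coefficient is nonzero, so, as the only $\Sb\Lb_2$-stable ideals of the $8$-dimensional $\Sym^2(U_2)\oplus\Sym^4(U_2)$ are $0$ and the whole space, the resulting Lie algebra is simple, hence isomorphic to $\sG\lG_3(\CM)$.

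\emph{Parts~(c)--(f).} For (c): $\ZC(d)$ is normal, its smooth locus carries the symplectic form dual to the Poisson bracket, and by~\cite{gordon icra} this form extends to a resolution; with~(a), $\ZC(d)$ is an isolated symplectic singularity. For (d): by the degree bookkeeping above, the defining relations of $\ZC(4)$ have leading forms $ea_j-qa_{j+1}-Qa_{j-1}$ and $a_{j-1}a_{k+1}-a_ja_k+16\,q^{3-k}Q^{j-1}\Psi_{k-j}(q,Q,e)$, and one checks that the projective tangent cone of $\ZC(4)$ at $0$ cut out by these leading forms is smooth --- indeed isomorphic to $\Prm \OC_\mini^{\sG\lG_3(\CM)}$ --- so Beauville's Theorem~\ref{theo:beauville} gives $(\ZC(4),0)\simeq(\overline{\OC}_\mini^\gG,0)$ locally analytically for a simple $\gG$, which by~\eqref{eq:poisson-lie} and~(b) must be $\Trm_0^*\ZC(4)\cong\sG\lG_3(\CM)$. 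For (e): if $(\ZC(d),0)\simeq(\overline{\OC}_\mini^\gG,0)$ locally analytically for some $d\ge5$ and simple $\gG$, then~\eqref{eq:poisson-lie} and~(b) give $\gG\cong\gG_d=\sG\lG_2\oplus\Sym^d(U_2)$, which is not simple since $\Sym^d(U_2)$ is a nonzero abelian ideal --- a contradiction. For (f): a local analytic isomorphism of germs induces an isomorphism of the Lie algebras $\Trm_0^*$, which is impossible for $d'>d\ge5$ because $\dim\gG_d=d+4\neq d'+4=\dim\gG_{d'}$, and for $d=4<d'$ because $\sG\lG_3(\CM)$ is semisimple whereas $\gG_{d'}$ is not.
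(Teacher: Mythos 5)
Your overall architecture matches the paper's for the framing statements and for parts (c), (e), (f): irreducibility, normality and dimension come from the general theory of Calogero--Moser spaces \cite{EG}, (c) is \cite[Proposition~4.5]{gordon icra}, and (e), (f) are formal consequences of (b) via \eqref{eq:poisson-lie} and a dimension/simplicity count. Where you genuinely diverge is part (b): the paper simply cites \cite[Proposition~8.4]{bonnafe diedral} and \cite[Proposition~2.12]{bonnafe diedral egal}, whereas you give a self-contained $\Sb\Lb_2$-equivariance argument. That argument is sound and attractive: the degree bookkeeping correctly shows all relations lie in $\mG_0^2$ (the constant term $-d^2q^{d-k-1}Q^{j-1}\Psi_{k-j}$ has degree $d-2\ge 2$), so $\Trm_0^*(\ZC(d))\cong \Sym^2(U_2)\oplus\Sym^d(U_2)$ as $\Sb\Lb_2$-modules, and the decomposition $\wedge^2\Sym^d(U_2)=\Sym^{2d-2}\oplus\Sym^{2d-6}\oplus\cdots$ does kill $[\Sym^d,\Sym^d]$ outright when $d$ is odd and reduces the even case to computing one coefficient (two when $d\equiv 2\pmod 4$). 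This buys a conceptual proof for $d$ odd that the paper does not contain.

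The gap is that for $d$ even you reduce the problem to a finite computation and then assert its outcome (``one checks these coefficients vanish when $d\ge 6$''; ``for $d=4$ the degree-$2$ coefficient is nonzero''). These assertions are true --- they are exactly what the cited propositions of \cite{bonnafe diedral,bonnafe diedral egal} establish --- but as written your proof of (b) is not complete for even $d$, and $d=4$ is the case everything in (d) hinges on. The same issue recurs in your (d): you assert that the projective tangent cone of $\ZC(4)$ at $0$ is smooth, which is precisely the step the paper delegates to a {\sc Magma} computation. Note that the paper also supplies a computation-free alternative (Remark~\ref{rem:d=4}): via the $\Gb\Lb_2$-orbit description, $\YC(4)$ is identified with the closure of the $4$-dimensional orbit of $H+\e_1^4-\e_2^4$ inside $\sG\lG_3$, i.e.~with $\overline{\OC}_\mini^{\sG\lG_3}$ directly; substituting that for your tangent-cone assertion would close the $d=4$ case of both (b) and (d) without any unverified computation, leaving only the vanishing of the $\Sym^2$- (and, for $d\equiv 2\pmod 4$, $\Sym^d$-) component of $\{a_i,a_j\}$ for even $d\ge 6$ to be carried out explicitly.
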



\begin{proof}
The fact that $\ZC(d)$ is irreducible, normal, of dimension $4$ is a general fact about 
Calogero-Moser spaces~\cite{EG}.

\smallskip

(a) was proved in~\cite{bellamy these} (see also~\cite{bonnafe diedral}).

\smallskip

(b) is proved in~\cite[Proposition~8.4]{bonnafe diedral} and~\cite[Proposition~2.12]{bonnafe diedral egal}.

\smallskip

(c) The fact that $\ZC(d)$ is a symplectic singularity is again a general fact about 
Calogero-Moser spaces~\cite[Proposition~4.5]{gordon icra}.

\smallskip

(d) A computation with {\sc Magma}~\cite{magma} (and the {\sc Champ} 
package built by Thiel~\cite{thiel}) based on the equations of $\ZC(4)$ 
given in~\cite[Proposition~8.3]{bonnafe diedral} 
shows that the projective tangent cone $\Prm\Trm_0(\ZC(4))$ of $\ZC(4)$ at $0$ 
is smooth. So the result follows from (b) and Beauville's 
Theorem~\ref{theo:beauville}~\cite[Introduction]{beauville}. 
We will provide in Remark~\ref{rem:d=4} a proof which does not rely on {\sc Magma} computations.

\smallskip

(e) Assume that $d \ge 5$. By~(b), 
$\Trm_0^*(\ZC(d)) \simeq \gG_d$ is not a simple Lie algebra. 
So, by~\ref{eq:poisson-lie}, 
the singularity $(\ZC(d),0)$ is not locally analytically isomorphic 
to $(\overline{\OC}_\mini^\gG,0)$ for any simple 
Lie algebra $\gG$.

\smallskip

(f) follows from~(b) and the fact that $\dim(\gG_d)=d+4 < d'+4=\dim(\gG_{d'})$.
\end{proof}


\subsection{Blowup}\label{sub:blowup}
A polynomial $f \in \CM[V \times V^*]$ is called {\it $W_d$-semi-invariant} 
if $w(f)=\det(w)f$ for all $w \in W_d$. We denote by $\CM[V \times V^*]^{W_d-\text{sem}}$ 
the set of $W_d$-semi-invariant polynomials: it is a $\CM[V \times V^*]^{W_d}$-module. 
Let $\d=xX-yY$ and $\b_j=x^{d-j}Y^j-y^{d-j}X^j$ for $0 \le j \le d$. Then 
$\d$, $\b_0$, $\b_1$,\dots, $\b_d$ are $W_d$-semi-invariants. Let $\G_d$ 
denote the (normal) cyclic subgroup of $W_d$ generated by $ts=\diag(\z,\z^{-1})$. 
It is easily seen that 
$$\CM[V \times V^*]^{\G_d}=\CM[xy,XY,xX,yY,(x^{d-j}Y^j)_{0 \le j \le d},(y^{d-j}X^j)_{0 \le j \le d}].$$
Using the action of $W_d/\G_d \simeq \ZM/2\ZM$ on $\CM[V \times V^*]^{\G_d}$, this can be rewritten
\equat\label{eq:gammad}
\CM[V \times V^*]^{\G_d}=\CM[V \times V^*]^{W_d}[\d,\b_0,\b_1,\dots,\b_d].
\endequat
As the product of two elements of the list $\d$, $\b_0$, $\b_1$,\dots, $\b_d$ 
is a $W_d$-invariant, we get that 
\equat\label{eq:module}
\text{The $\CM[V \times V^*]^{W_d}$-module $\CM[V \times V^*]^{\G_d}$ 
is generated by $1$, $\d$, $\b_0$, $\b_1$,\dots, $\b_d$.}
\endequat
Since $\CM[V\times V^*]^{\G_d}=\CM[V \times V^*]^{W_d} \oplus \CM[V \times V^*]^{W_d-\text{sem}}$, 
we get that
\equat\label{eq:module-bis}
\text{The $\CM[V \times V^*]^{W_d}$-module $\CM[V \times V^*]^{W_d-\text{sem}}$ 
is generated by $\d$, $\b_0$, $\b_1$,\dots, $\b_d$.}
\endequat
Therefore, the ideal $I=(\d \CM[V \times V^*])^{W_d}=\d\CM[V \times V^*]^{W_d-\text{sem}}$ 
of $\CM[V \times V^*]^{W_d}$ is generated by $\d^2$, $\d\b_0$, $\d\b_1$,\dots, $\d\b_d$. 

Write $D= e^2-4qQ = \d^2$ and consider the following $W_d$-invariant rational functions:
$$
b_j=\cfrac{\b_j}{\d}=\cfrac{x^{d-j}Y^j-y^{d-j}X^j}{xX-yY}, \quad  0 \le j \le d.
$$
Then $Db_j=\d\b_j$ is $W_d$-invariant and $I$ is the ideal of 
$\CM[V \times V^*]^{W_d}$ generated by $D$, $Db_0$, $Db_1$,\dots, $Db_d$. 
Let $\QCt(d)$ be the blowup of $\QC(d)$ along this ideal. 
The (contracting) $\CM^\times$-action on $V \times V^*$ by homothety induces a 
contracting action on the quotient $\QC(d)$. For this action, $D$ and the $b_j$'s are homogeneous 
(of degree $2$ and $d-2$ respectively) so the blowup $\QCt(d)$ inherits a $\CM^\times$-action. 

Recall that the blowup of an affine variety $\Spec(A)$ along an ideal $\aG$ generated by elements 
$f_1$,\dots, $f_r$ admits an affine open covering indexed by the $f_j$'s and such that the affine open subset associated with $f_j$ is isomorphic to $\Spec(A[\aG/f_j])$. In our case, the variety $\QCt(d)$ is covered by affine open subsets $\YC(d)$, $\YC_0$, $\YC_1$,\dots, $\YC_d$ 
defined by $\YC(d)=\Spec(\CM[V \times V^*]^{W_d}[I/D])$ and $\YC_j=\Spec(\CM[V \times V^*]^{W_d}[I/(Db_j)])$, for $j=0, 1, \cdots, d$.

They are all $\CM^\times$-stable.
The following relations follow from straightforward computations:
\equat\label{eqbi}
\begin{cases}
a_j = e b_j -2qb_{j+1} &\text{if $0 \le j \le d-1$},\\
a_j= 2 Q b_{j-1} - eb_j &\text{if $1 \le j \le d$,}\\
\end{cases}
\endequat
\equat\label{eqDbi}
\begin{cases}
Db_0=ea_0-2qa_1,\\
Db_j = Qa_{j-1} - q a_{j+1} &\text{if $1 \le j \le d-1$}, \\
Db_d = 2Qa_{d-1} - ea_d.
\end{cases}
\endequat
Let $\eta$ be a primitive $4d$-th root of unity. The action of $\diag(\eta,\eta^{-1})$ sends $a_j$ to $\sqrt{-1} \beta_j$, while $e$, $q$ and $Q$ remain invariant. Applying this transformation to the equations \eqref{eq:Q(d)} describing $\QC(d)$, and dividing the first line by $\delta$ and the second line by $\delta^2$, we obtain the following relations (note the sign change in the second line!):
\equat\label{eq:bi}
\begin{cases}
e b_j = q b_{j+1} + Q b_{j-1}\\
b_j b_k - b_{j-1} b_{k+1}  = q^{d-k-1} Q^{j-1} \Psi_{k-j}(q,Q,e)
\end{cases}\quad \text{if $1\le j \le k \le d-1$.}
\endequat

\begin{lem}\label{lem:blowup}
With the above notation, we have:
\begin{itemize}
\itemth{a} $\YC_0 \simeq \YC_d \simeq \CM^4$.

\itemth{b} If $1 \le r \le d-1$, then $\YC_r$ is smooth.
%
\end{itemize}
\end{lem}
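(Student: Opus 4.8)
The plan is to describe each of the affine charts $\YC_0,\YC_1,\dots,\YC_d$ by explicit generators and relations, using only \eqref{eq:Q(d)}, \eqref{eqbi}, \eqref{eqDbi} and \eqref{eq:bi}, and then read off smoothness. All the rings in play are subrings of $\Frac(\CM[\QC(d)])$ (a field, so all of them are domains), and it is in this field that the $b_j=\beta_j/\delta$ live and that the identities \eqref{eq:bi} hold.

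For part (a) I would treat $\YC_0$ (the chart $\YC_d$ then follows by symmetry). Here $\CM[\YC_0]=\CM[\QC(d)][c,c_1,\dots,c_d]$ with $c:=D/(Db_0)=1/b_0$ and $c_j:=Db_j/(Db_0)=b_j/b_0$, so $c_0=1$. Dividing the two families of \eqref{eq:bi} by $b_0$ and by $b_0^2$ gives $ec_1=qc_2+Q$ together with the recursion $c_{k+1}=c_1c_k-c^2q^{d-k-1}\Psi_{k-1}(q,Q,e)$ for $1\le k\le d-1$; hence every $c_j$ is a polynomial in $q,Q,e,c,c_1$. Multiplying the first line of \eqref{eqbi} by $c$ gives $a_0c=e-2qc_1$, so $e=a_0c+2qc_1$; feeding this and $c_2=c_1^2-c^2q^{d-2}$ into $ec_1=qc_2+Q$ gives $Q=a_0cc_1+qc_1^2+c^2q^{d-1}$; and feeding the recursion into the relation $a_jc=ec_j-2qc_{j+1}$ (which is $c$ times the first line of \eqref{eqbi}) yields, after cancelling the common factor $c$,
\[
a_j=a_0c_j+2cq^{d-j}\Psi_{j-1}(q,Q,e)\quad(1\le j\le d-1),\qquad a_d=a_1c_{d-1}+ec\,\Psi_{d-2}(q,Q,e).
\]
Thus all of $q,Q,e,a_0,\dots,a_d,c,c_0,\dots,c_d$ lie in $\CM[q,c,c_1,a_0]$, so $\CM[\YC_0]$ is generated by four elements; since $\YC_0$ is a non-empty open subscheme of the irreducible $4$-fold $\QCt(d)$ it is integral of dimension $4$, so those four generators are algebraically independent and $\CM[\YC_0]\cong\CM[q,c,c_1,a_0]$, i.e. $\YC_0\cong\CM^4$. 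Finally, the linear automorphism of $V\oplus V^*$ exchanging $V$ and $V^*$ normalises $W_d$ and sends $q\leftrightarrow Q$, $e\mapsto e$, $a_j\leftrightarrow a_{d-j}$, $\delta\mapsto-\delta$, $Db_j\mapsto-Db_{d-j}$; it therefore descends to an automorphism of $\QCt(d)$ carrying $\YC_0$ isomorphically onto $\YC_d$, so $\YC_d\cong\CM^4$ too.

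For part (b), fix $1\le r\le d-1$ and argue the same way with $c:=D/(Db_r)=1/b_r$ and $c_j:=b_j/b_r$, so $c_r=1$. Splitting \eqref{eq:bi} at the index $r$ now gives two recursions — one expressing $c_j$ through $c_{j-1}$ and $c_{r+1}$ for $1\le j\le r$, the other expressing $c_{k+1}$ through $c_k$ and $c_{r+1}$ for $r+1\le k\le d-1$ — so every $c_j$ is a polynomial in $q,Q,e,c,c_0,c_{r+1}$; the middle case of \eqref{eqbi} gives $e=a_rc+2qc_{r+1}$, and the same "cancel $c$" device as in (a) expresses every $a_j$ as a polynomial in $q,Q,c,c_0,c_{r+1},a_r$. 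Hence $\CM[\YC_r]$ is a quotient, necessarily of dimension $4$, of $\CM[q,Q,c,c_0,c_{r+1},a_r]$, so its defining ideal has height $2$; it contains the two explicit relations coming from the $j=r$ instance of \eqref{eq:bi} (namely $Qc_{r-1}=a_rc+qc_{r+1}$) and from $c_r=1$ (namely $c_{r-1}c_{r+1}=1-c^2q^{d-r-1}Q^{r-1}$), with $c_{r-1}$ replaced by its polynomial expression in the six generators. I would then check, via the Jacobian criterion, that these two relations already generate the defining ideal and that their Jacobian has rank $2$ at every point of $\YC_r$, so that $\YC_r$ is a smooth complete intersection of dimension $4$. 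The hypothesis $1\le r\le d-1$ is exactly what makes \eqref{eqDbi} give the symmetric shape $Db_r=Qa_{r-1}-qa_{r+1}$; for $r\in\{0,d\}$ this identity has a different form, which is why those two charts are honest affine spaces while the interior ones are merely smooth.

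The recursions and cancellations are routine; the main obstacle is the last step of part (b): verifying that after the recursive eliminations no relations survive beyond the two displayed ones, and that the Jacobian criterion applies uniformly over $\YC_r$. In part (a) this difficulty evaporates, since the four surviving generators turn out to form a transcendence basis and no relations remain at all.
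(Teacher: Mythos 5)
Your proof of part (a) is complete and is essentially the paper's own argument: the four generators $q$, $a_0$, $1/b_0$, $b_1/b_0$ are exactly the ones used there, the recursive eliminations of $e$, $Q$, the $b_k/b_0$ and the $a_j$ are the same computations, and the conclusion via the dimension count is identical. Your symmetry argument identifying $\YC_d$ with $\YC_0$ is a perfectly good substitute for the paper's ``the proof for $\YC_d$ is similar''.

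Part (b), however, stops short at the decisive point. You correctly reduce $\CM[\YC_r]$ to six generators and isolate two relations, but the verification that their Jacobian has rank $2$ at \emph{every} point of $\YC_r$ --- which is the entire content of the smoothness claim --- is announced rather than carried out, and you yourself flag it as the main obstacle. Your choice of $c_0=b_0/b_r$ as the sixth generator makes that check needlessly hard: once $c_{r-1}$ is replaced by the polynomial obtained from iterating the recursion $r-1$ times, the two relations become high-degree polynomials in the six variables and a uniform rank computation is not routine. The paper instead keeps $B_{r-1}=b_{r-1}/b_r$ itself among the generators, so that $\CM[\YC_r]=\CM[q,Q,a_r,B_r,B_{r-1},B_{r+1}]$ and the two relations are the explicit quadrics $qB_{r+1}-QB_{r-1}+a_rB_r=0$ and $B_{r-1}B_{r+1}+q^{d-r-1}Q^{r-1}B_r^2-1=0$; for these the rank-$2$ check is a short case analysis (if $B_r\neq 0$, the minors involving the $a_r$-column force $B_{r-1}=B_{r+1}=q^{d-r-1}Q^{r-1}B_r=0$, contradicting the second relation; if $B_r=0$, the second relation gives $B_{r-1}B_{r+1}=1$ and the $(q,B_{r-1})$-minor equals $B_{r+1}^2\neq 0$). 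Note also that you do not need, and the Jacobian criterion will not give you, the claim that the two relations generate the defining ideal: as in the paper, it suffices that the subvariety of $\AM^6$ they cut out is smooth of pure dimension $4$, since the irreducible $4$-dimensional $\YC_r$ is then one of its irreducible components and hence smooth. With the change of generators and this last observation, your outline closes up into the paper's proof.
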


\begin{proof}
(a) We claim that $\YC_0 ={\rm Spec}(\mathbb{C}[q, a_0, \frac{1}{b_0}, \frac{b_1}{b_0}])$, which will 
imply that $\YC_0 \simeq \CM^4$ as $\YC_0$ is of dimension 4. For this, we will express other variables 
in terms of $q$, $a_0$, $\frac{1}{b_0}$, $\frac{b_1}{b_0}$.

By~\eqref{eqbi}, $a_0 = eb_0 - 2qb_1$, whence $e=\frac{1}{b_0} a_0 + 2 q \frac{b_1}{b_0}$, 
which gives the expression for $e$.

Take $j=k=1$ in~\eqref{eq:bi}, we have $b_1^2 - b_0 b_2 = q^{d-2}$ as $\Psi_0=1$, hence  
$\frac{b_2}{b_0} = (\frac{b_1}{b_0})^2 - q^{d-2} (\frac{1}{b_0})^2$, which gives the expression 
for $\frac{b_2}{b_0}$.

By~\eqref{eq:bi}, we have $eb_1=qb_2+Qb_0$, which gives $Q = e \frac{b_1}{b_0} - q \frac{b_2}{b_0}$,  
the expression for $Q$, as $\frac{b_2}{b_0} $ is a polynomial in $q, a_0, \frac{1}{b_0}, \frac{b_1}{b_0}$.

By~\eqref{eq:bi}, we have $b_1 b_k - b_0 b_{k+1} = q^{d-k-1} \Psi_{k-1}(q,Q,e)$, 
which gives inductively the expressions for $\frac{b_k}{b_0}$ for all $k \geq 3$.  

For $1 \leq j \leq d-1$, we have $a_j = eb_j-2q b_{j+1}  $.  Replacing 
$e=\frac{1}{b_0} a_0 + 2 q \frac{b_1}{b_0}$ and using \eqref{eq:bi}, we have
$$a_j = a_0 \frac{b_j}{b_0} + 2q \frac{b_1b_j-b_0b_{j+1}}{b_0} = a_0 \frac{b_j}{b_0} + 2q^{d-j} 
\Psi_{j-1}(q,Q,e)  \frac{1}{b_0}. $$
It remains to express $a_d$. Recall that $a_d = 2Q b_{d-1} - e b_d$ and $a_1 = 2 Qb_0 - eb_1$ 
by~\eqref{eqbi}. This gives
$$
a_d = 2Qb_0 \frac{b_{d-1}}{b_0} - eb_d = a_1 \frac{b_{d-1}}{b_0} 
+ e \frac{b_1b_{d-1}-b_0b_d}{b_0} =a_1 \frac{b_{d-1}}{b_0} + e \Psi_{d-2}(q,Q,e) \frac{1}{b_0},
$$
as desired. This completes the proof for $\YC_0$ (and the proof for $\YC_d$ is 
similar).

\medskip

(b) For ease of notation, let $B_r=\frac{1}{b_r}$, $B_j=\frac{b_j}{b_r}$ for $0\le j\le d$, $j\neq r$.
By~\eqref{eqbi} we have $a_r=eb_r-2qb_{r+1}$, hence $e=a_rB_r +2qB_{r+1}$.
(We also have $e=2QB_{r-1}-a_rB_r$.)
For $j>r+1$, the equation $b_{r+1}b_{j-1}-b_rb_j=q^{d-j}Q^r\Psi_{j-r-2}(q,Q,e)$ allows us to 
iteratively express $B_j$ in terms of $q,Q$, and $e$; 
we can deal with $B_0,\ldots ,B_{r-2}$ similarly.

For $0\le j < r$ we substitute $e=a_r\cdot\frac{1}{b_r}+2q\frac{b_{r+1}}{b_r}$ into~\eqref{eqbi} to obtain:
$$a_j=\left(a_r\frac{1}{b_r}+2q\frac{b_{r+1}}{b_r}\right)b_j-2qb_{j+1} 
= a_rB_j+2q^{d-j+2}Q^r\Psi_{j-r-3}(q,Q,e)B_r.$$
Similarly, we substitute $e=2QB_{r-1}-a_rB_r$ into~\eqref{eqbi} to express 
$a_{r+1},\ldots ,a_d$ in terms of $a_r$, $q$, $Q$ and $e$.
Thus we have obtained 
${\mathbb C}[q,Q,e,a_0,\cdots ,a_d,B_0,\cdots,B_d]={\mathbb C}[q,Q,a_r,B_r,B_{r-1},B_{r+1}]$.
We have two relations: 
\equat\label{Yrrels}
qB_{r+1}-QB_{r-1}+a_rB_r=0\quad\text{and}\quad 
\quad B_{r-1}B_{r+1}+q^{d-r-1}Q^{r-1}B_r^2-1=0.
\endequat
The first of these follows from the equality of the two expressions for $e$; 
the second from~\eqref{eq:bi}.
It is straightforward to check that the Jacobian has rank 2 (under the same conditions on 
$q,Q,a_r,B_r,B_{r-1},B_r$), which implies that $\YC_r$ is smooth (of dimension 4). More specifically, it implies that the subvariety of ${\mathbb A}^6$ with 
equations~\eqref{Yrrels} is smooth of dimension 4; by dimensions we know that $\YC_r$ is 
isomorphic to one of its irreducible components.
\end{proof}


\subsection{Singularity of the blowup}
By Lemma~\ref{lem:blowup}, the singularities of the blowup $\QCt(d)$ 
are contained in the affine open subset $\YC(d)$. First, 
by definition,  $\CM[\YC(d)] = \CM[V \times V^*]^{W_d}[b_0,b_1,\dots,b_d]$. 
Using~\eqref{eqbi}, we see that
\equat\label{eq:yd}
\CM[\YC(d)] = \CM[q,Q,e,b_0,b_1,\dots,b_d].
\endequat
This shows that $\YC(d)$ is a closed subvariety of $\CM^{d+4}$ 
and the $\CM^\times$-action on $\YC(d)$ is contracting (to $0$). 
Here is a presentation of $\YC(d)$ as well as a first result 
on its singularities:


\begin{prop}\label{prop:blowup}
Equations for $\YC(d) \subset \CM^{d+4}$ are given by~\eqref{eq:bi}. 
Moreover, $\YC(d)$ is normal and $0$ is its only singular point. 
\end{prop}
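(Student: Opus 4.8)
The plan is to establish the presentation first, then normality, then locate the singular point.

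\emph{Equations.} By \eqref{eq:yd}, $\YC(d)$ sits in $\CM^{d+4}$ with coordinates $q,Q,e,b_0,\dots,b_d$, and the defining equations are obtained by pushing forward the ideal of $\QC(d)$ under the blowup chart map. Concretely, the relations \eqref{eq:bi} certainly hold on $\YC(d)$; I would check they generate the whole ideal by a dimension/irreducibility count. Since $\YC(d)$ is an open subset of the irreducible $4$-dimensional variety $\QCt(d)$ (birational to $\QC(d)$, which is irreducible of dimension $4$), it suffices to show that the closed subscheme $Z$ of $\CM^{d+4}$ cut out by \eqref{eq:bi} is irreducible of dimension $4$ with $\YC(d)$ as a dense open subset, hence equal to $\YC(d)$ set-theoretically; reducedness then follows from normality of $\YC(d)$, which I treat next — or one argues directly that $Z$ is a complete intersection of the expected codimension and is generically reduced. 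The cleanest route is: the chart $\YC_0$ (and each $\YC_r$) is smooth of dimension $4$ by Lemma~\ref{lem:blowup}, and these cover $\QCt(d)$ away from $\YC(d)$; combined with the explicit rational-function description this pins down $\YC(d)$ as exactly the locus \eqref{eq:bi}.

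\emph{Normality.} For normality I would use Serre's criterion $(R_1)+(S_2)$. The $(S_2)$ property I expect to get because $\YC(d)$ is (or will be shown to be) a complete intersection — or, more robustly, because $\QCt(d)$ is a blowup of the normal variety $\QC(d) = (V\times V^*)/W_d$ along an ideal whose associated graded behaves well, and symplectic resolutions/partial resolutions of quotient singularities by a symplectic group are known to have rational (hence Cohen--Macaulay) singularities; one can also invoke that $\YC(d)$ has a contracting $\CM^\times$-action with good properties. For $(R_1)$: the charts $\YC_r$ for $1\le r\le d-1$ and $\YC_0,\YC_d$ are smooth, so the singular locus of $\QCt(d)$ lies in $\YC(d)$; within $\YC(d)$ one computes the Jacobian of \eqref{eq:bi} and checks that the locus where it drops rank below $d$ has codimension $\ge 2$. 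Here the $\Sb\Lb_2$-action (and the $\CM^\times$-action) is the key organizing tool: it acts on $\YC(d)$, so the singular locus is a union of orbits, and since the $\CM^\times$-action is contracting to $0$, any positive-dimensional component of the singular locus would have to contain $0$ in its closure and be $\Sb\Lb_2$-stable; a direct check of the equations near a generic point of such a putative component (e.g. where some $b_j\neq 0$) shows the Jacobian has full rank $d$ there, forcing the singular locus to be just $\{0\}$ — which simultaneously gives $(R_1)$ and the last assertion.

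\emph{The singular point is exactly $0$.} From the Jacobian computation, every point of $\YC(d)$ other than $0$ is smooth (one exhibits, for each stratum determined by which $b_j$ vanish, a size-$d$ minor of the Jacobian of \eqref{eq:bi} that is nonzero). That $0$ itself is genuinely singular follows because $\dim \Trm_0^*\YC(d) > 4$: the linear parts of \eqref{eq:bi} at $0$ involve only the equations $eb_j = qb_{j+1}+Qb_{j-1}$, which vanish to order $2$, so \emph{every} equation in \eqref{eq:bi} has trivial linear term and the Zariski tangent space at $0$ is all of $\CM^{d+4}$.

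\emph{Main obstacle.} The delicate step is the Jacobian rank computation proving $(R_1)$ together with the "only singular point is $0$" claim — managing the $d+4$ variables and the growing family of equations \eqref{eq:bi} with the polynomials $\Psi_{k-j}$ is where the real work lies. I expect the right move is to stratify $\YC(d)$ by the vanishing pattern of $(b_0,\dots,b_d)$: on the open stratum where some $b_r\neq0$ one is inside the smooth chart $\YC_r$ (Lemma~\ref{lem:blowup}(b)) and there is nothing to do, so one only needs to handle the closed stratum $b_0=\dots=b_d=0$, where \eqref{eq:bi} forces $q^{d-k-1}Q^{j-1}\Psi_{k-j}(q,Q,e)=0$ for all $1\le j\le k\le d-1$; analyzing which $(q,Q,e)$ satisfy this (using \eqref{eq:psi-e}) shows this stratum is just $\{0\}$ together with possibly a curve, and a final Jacobian check along that curve finishes it. Using the $\Sb\Lb_2$- and $\CM^\times$-equivariance throughout keeps this bookkeeping manageable.
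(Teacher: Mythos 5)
There is a genuine gap. You correctly isolate the sub-claims (the ideal generated by~\eqref{eq:bi} is the full ideal of $\YC(d)$; normality; singular locus $=\{0\}$), but the central ones are left as ``I would check'' and the routes you sketch for them do not work. First, the scheme cut out by~\eqref{eq:bi} is \emph{not} a complete intersection: there are $(d-1)+\binom{d}{2}$ equations cutting out codimension $d$ in $\CM^{d+4}$, so you cannot get irreducibility, reducedness or $(S_2)$ that way. Second, invoking rationality or Cohen--Macaulayness of symplectic (partial) resolutions is circular at this stage: that $\QCt(d)$ carries a symplectic form on its smooth locus is only established later (Corollary~\ref{coro:symplectic}), using this very proposition. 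Third, the Jacobian computation for $(R_1)$ and for ``$0$ is the only singular point'' is precisely the hard part and is not carried out; even granting your stratification by the vanishing of the $b_j$ (which does reduce matters to the stratum $b_0=\cdots=b_d=0$, where the equations with $j=k=1$, $j=k=d-1$ and $(j,k)=(1,d-1)$ force $q=Q=0$ and then $e^{d-2}=0$, so that stratum is $\{0\}$), a rank computation on the zero \emph{set} does not by itself show that~\eqref{eq:bi} generates a prime, or even radical, ideal near $0$.

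The paper's proof avoids all of this with one observation you are missing: the equations~\eqref{eq:bi} are obtained from the defining equations of the Calogero--Moser space $\ZC(d)$ by replacing the factor $(e^2-4qQ-d^2)$ with $-1$, and that factor is a unit in the complete local ring at $0$ (its value there is $-d^2\neq 0$). Hence $b_j\mapsto a_j/\sqrt{d^2+4qQ-e^2}$ gives an isomorphism $\widehat{\CM[\YC]}_0\simeq\widehat{\CM[\ZC(d)]}_0$, where $\YC$ denotes the scheme defined by~\eqref{eq:bi}. Since $\ZC(d)$ is known to be a normal irreducible $4$-dimensional variety with an isolated singularity at $0$ (Theorem~\ref{theo:zd}), this isomorphism, together with the injectivity of $\CM[\YC]\to\widehat{\CM[\YC]}_0$ coming from the $\NM$-grading, imports everything at once: $\CM[\YC]$ is a domain of Krull dimension $4$ that is normal at $0$, so $\YC=\YC(d)$ (whence the equations), $0$ is an isolated singularity, and the contracting $\CM^\times$-action together with Lemma~\ref{lem:blowup} rules out singularities elsewhere; normality then follows since all other points are smooth. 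A self-contained proof must either reproduce this comparison or genuinely carry out the integrality, $(S_2)$ and Jacobian arguments, none of which is routine.
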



\begin{proof}
Let $\YC$ be the affine scheme defined by the equations~\eqref{eq:bi}. 
These equations being homogeneous (recall that $e$, $q$ and $Q$ have degree $2$ 
and $b_0$, $b_1$,\dots, $b_d$ have degree $d-2$), $\CM[\YC]$ is $\NM$-graded, 
with $0$-component equal to $\CM$. Equations~\eqref{eq:bi} show 
that we have a surjective morphism $\CM[\YC] \longto \CM[\YC(d)]$, hence 
a closed embedding $\YC(d) \injto \YC$. 

Let $\widehat{\CM[\YC]}_0$ denote the completion of $\CM[\YC]$ 
at the point $0$ and, similarly, let $\widehat{\CM[\ZC(d)]}_0$ 
denote the completion of $\CM[\ZC(d)]$ at the point $0$. 
The comparison of the definition of $\ZC(d)$ with the equations~\eqref{eq:bi} 
shows that we have an isomorphism 
$$\widehat{\CM[\YC]}_0 \simeq \widehat{\CM[\ZC(d)]}_0\leqno{(\#)}$$
given by $e \mapsto e$, $q \mapsto q$, $Q \mapsto Q$ and 
$b_j \mapsto a_j/\sqrt{d^2 + 4qQ - e^2}$ for $0 \le j \le d$. 

By Theorem~\ref{theo:zd}, the variety $\ZC(d)$ is normal so $\widehat{\CM[\ZC(d)]}_0$ 
is a normal domain by Zariski's main Theorem. This shows that $\widehat{\CM[\YC]}_0$ 
is a normal domain. As the natural map $\CM[\YC] \longto \widehat{\CM[\YC]}_0$ is injective 
(because $\CM[\YC]$ is $\NM$-graded and $\widehat{\CM[\YC]}_0$ is the completion 
with respect to the unique maximal homogeneous ideal), this shows that 
$\CM[\YC]$ is a domain. Therefore, 
$${\mathrm{Kdim}}~\widehat{\CM[\YC]}_0 ={\mathrm{Kdim}}~\widehat{\CM[\ZC(d)]}_0
={\mathrm{Kdim}}~\CM[\ZC(d)]=4,$$
where ${\mathrm{Kdim}}$ denotes the Krull dimension. Hence $\YC$ is irreducible 
reduced of dimension $4$. 
As $\YC(d)$ is irreducible of dimension $4$, this shows that $\YC=\YC(d)$. In other words,  
equations for $\YC(d) \subset \CM^{d+4}$ are given by~\eqref{eq:bi}. Moreover, 
$(\#)$ becomes 
\equat\label{eq:equivalence}
\widehat{\CM[\YC(d)]}_0 \simeq \widehat{\CM[\ZC(d)]}_0
\endequat

In particular, the singularities $(\ZC(d),0)$ and $(\YC(d),0)$ 
are locally analytically isomorphic, so $0$ is an isolated singularity of $\YC(d)$. 
Since the $\CM^\times$-action on $\YC(d)$ is contracting, any irreducible 
component of the singular locus of $\YC(d)$ must contain $0$. 
Thus, $0$ is the only singularity of $\YC(d)$.

Finally, $\YC(d)$ is normal at $0$ because its completion at $0$ is normal, 
so $\YC(d)$ is normal because all other points are smooth. The proof 
of the proposition is complete.
\end{proof}


When viewed inside $\QCt(d)$, the point $0$ of $\YC(d)$ will still be denoted by $0$, as in the Introduction. 


\begin{coro}\label{coro:blowup}
The blowup $\QCt(d)$ is normal and admits a unique singular 
point, namely the point $0 \in \YC(d)$. The singularities 
$(\QCt(d),0)$ and $(\ZC(d),0)$ are locally analytically isomorphic.
\end{coro}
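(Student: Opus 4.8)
The plan is to glue together the chart-by-chart analysis already carried out. Recall from \S\ref{sub:blowup} that $\QCt(d)$ is covered by the affine open subsets $\YC(d), \YC_0, \YC_1, \dots, \YC_d$. By Lemma~\ref{lem:blowup} the charts $\YC_0, \dots, \YC_d$ are all smooth (indeed $\YC_0 \simeq \YC_d \simeq \CM^4$, and $\YC_r$ is smooth for $1 \le r \le d-1$), while by Proposition~\ref{prop:blowup} the remaining chart $\YC(d)$ is normal, of dimension $4$, with $0$ as its unique singular point.

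From here, normality of $\QCt(d)$ is immediate: normality is a local property and $\QCt(d)$ is covered by normal affine opens (smooth varieties being normal, and $\YC(d)$ normal by Proposition~\ref{prop:blowup}). For the singular locus, any point of $\QCt(d)$ outside $\YC(d)$ lies in one of $\YC_0, \dots, \YC_d$, hence is a smooth point of $\QCt(d)$ since smoothness can be tested in any chart; thus $\mathrm{Sing}(\QCt(d)) \subseteq \YC(d)$, and intersecting with $\YC(d)$ and using once more that being singular is a local notion gives $\mathrm{Sing}(\QCt(d)) = \mathrm{Sing}(\YC(d)) = \{0\}$. So $0 \in \YC(d)$ is the unique singular point of $\QCt(d)$.

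For the local analytic statement, $\YC(d)$ is an open neighborhood of $0$ in $\QCt(d)$, so the completed local ring of $\QCt(d)$ at $0$ is $\widehat{\CM[\YC(d)]}_0$; combining this with the isomorphism $\widehat{\CM[\YC(d)]}_0 \simeq \widehat{\CM[\ZC(d)]}_0$ recorded in \eqref{eq:equivalence}, and recalling that being locally analytically isomorphic amounts to an isomorphism of completed (equivalently, analytic) local rings, we conclude that $(\QCt(d),0)$ and $(\ZC(d),0)$ are locally analytically isomorphic. There is no genuine obstacle left at this point: the statement is a bookkeeping consequence of Lemma~\ref{lem:blowup} and Proposition~\ref{prop:blowup}, the only delicate input --- normality and isolatedness of the singularity of $\YC(d)$ --- having been secured in the proof of Proposition~\ref{prop:blowup} via the completion isomorphism $(\#)$ with $\ZC(d)$ and Theorem~\ref{theo:zd}.
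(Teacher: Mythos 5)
Your proposal is correct and follows exactly the route the paper intends: the corollary is stated without a separate proof precisely because it is the combination of Lemma~\ref{lem:blowup} (smoothness of the charts $\YC_0,\dots,\YC_d$), Proposition~\ref{prop:blowup} (normality and the isolated singularity of $\YC(d)$ at $0$), and the completion isomorphism~\eqref{eq:equivalence}. Your gluing of the local statements across the affine cover is the standard bookkeeping the authors leave implicit.
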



\begin{rema}\label{rem:hilbert}
(1) If $V$ is replaced by $\CM^n$ and $W_d$ is replaced by the symmetric group $\SG_n$, 
then the analogue of $\QCt(d)$ is the Hilbert scheme of $n$ points in the plane by 
results of Haiman~\cite[Proposition~2.6]{haiman}. This gives the symplectic resolution of 
$(\CM^n \times \CM^n)/\SG_n$.

\medskip

(2) The fact that $\QCt(5)$ has an isolated singularity 
has been observed in \cite[\S{12.3.1}]{FJLS}.\finl
\end{rema}


\section{Other descriptions of the blowup}


\subsection{Orbit closures}
We denote by $\Gb\Sb\pb(V \times V^*)$ the general symplectic group of $V \times V^*$, which is the group of linear automorphisms of $V \times V^*$ which preserve 
the symplectic form up to a scalar. 
Since $W_d$ is a Coxeter group, the finite group $W_d$ commutes with a subgroup of 
$\Gb\Sb\pb(V\times V^*)$ isomorphic to $\Gb\Lb_2=\Gb\Lb_2(\CM)$ (see for instance~\cite[\S{3.6}]{calogero}): 
to see this, note that $V \simeq V^*$ as a $W_d$-module, so $V \times V^* \simeq \CM^2 \otimes V$ 
with $W_d$ acting trivially on $\CM^2$ and the action of $\Gb\Lb_2$ is on the left hand side of this 
tensor product decomposition. Note that the restriction of this action to $\CM^\times$ (identified 
with the center of $\Gb\Lb_2$) is by homothety, so is the action considered in~\S\ref{sub:blowup}.


\begin{rema}\label{rem:explicit}
Note that the above action of $\Gb\Lb_2$ on $V \times V^*$ does not coincide with 
the action of $\Gb\Lb_\CM(V)=\Gb\Lb_2(\CM)$ given by the natural action on $V$ and 
the contragredient action on $V^*$. To distinguish both actions, the natural 
module for $\Gb\Lb_2$ will be denoted by $U_2$ (as in the subsection about conventions/notation 
in the Introduction) and we use the notation $\Gb\Lb_2$ (instead of $\Gb\Lb_2(\CM)$) 
when we talk about the action respecting the symplectic form on $V \times V^*$ up to scalar. 
The canonical basis of $U_2$ will be denoted by $(\e_1,\e_2)$. 

Here, an explicit isomorphism of $W_d$-modules $V \longiso V^*$ 
is given for instance by $x \mapsto Y$, $y \mapsto X$. This leads to the following formulas:
$$
\text{Action of
$\begin{pmatrix} a & b \\ c & d \end{pmatrix} \in \Gb\Lb_2$ on $V \times V^*$ $~\rightsquigarrow~$} 
\begin{cases}
x \longmapsto ax + cY, \\
y \longmapsto ay + c X, \\
X \longmapsto by+dX,\\
Y \longmapsto bx + dY. \\
\end{cases}
$$
Note that the action of $\Sb\Lb_2$ is symplectic while 
the action of ${\mathbb C}^\times$ scales the symplectic form 
(and so scales the Poisson bracket).\finl
\end{rema}


Hence $\Gb\Lb_2$ acts on $\CM[q,Q,e,a_0,\ldots ,a_d]$; by a straightforward calculation, 
$q$, $Q$, $e$ span a copy of the adjoint representation $\sG\lG_2$ tensored with the determinant, 
and $a_0$, $a_1$,\dots, $a_d$ span a copy of $\Sym^d(U_2)$ where $U_2$ is the natural representation 
for $\Gb\Lb_2$. We denote by $\sG\lG_2^{(1)}$ the above representation of $\Gb\Lb_2$: its underlying 
space is $\sG\lG_2$ (and we will forget the exponent $^{(1)}$ whenever we forget 
the $\Gb\Lb_2$-action) but the usual adjoint action of $\Gb\Lb_2$ on $\sG\lG_2$ 
is multiplied with the determinant. This discussion shows that $\QC(d)$ 
may be viewed as a $\Gb\Lb_2$-stable closed subvariety of $\sG\lG_2^{(1)} \oplus \Sym^d(U_2)$. 

This action is very useful for understanding the geometry of $\QC(d)$.
While there are infinitely many orbits for the action of $\Sb\Lb_2$ on its adjoint representation, 
there are only two non-zero orbits for the above action of $\Gb\Lb_2$ on $\mathfrak{sl}_2^{(1)}$, 
with representatives 
$$H=\begin{pmatrix} 1 & 0 \\ 0 & -1 \end{pmatrix}\quad\text{and}\quad 
E=\begin{pmatrix} 0 & 1 \\ 0 & 0 \end{pmatrix}.$$
In particular, the $\Gb\Lb_2$-orbit of any non-zero semisimple element is dense in 
$\mathfrak{sl}_2^{(1)}$. By the same token:


\begin{lem}\label{orblem1}
Viewing $\QC(d)$ as a $\Gb\Lb_2$-stable closed subvariety of $\sG\lG_2^{(1)} \oplus \Sym^d(U_2)$ 
as above, we have:
\begin{itemize}
\itemth{a} The variety $\QC(d)$ is the closure of the $\Gb\Lb_2$-orbit 
of $(H,\e_1^d+\e_2^d)$ in $\mathfrak{sl}_2^{(1)} \oplus \Sym^d(U_2)$.

\itemth{b} If $d$ is odd (resp. even), then the singular locus of $\QC(d)$ is the closure of the 
$\Gb\Lb_2$-orbit of $(E,\e_1^d)$ (resp. of the union of the $\Gb\Lb_2$-orbits of 
$(E,\e_1^d)$ and $(E,-\e_1^d)$). 
\end{itemize}
\end{lem}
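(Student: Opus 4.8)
The plan is to carry out everything on $V \times V^* \simeq U_2 \otimes V$, where $\Gb\Lb_2$ acts as $\Gb\Lb_\CM(U_2)$ on the left factor and $W_d$ on the right factor, and then to push forward along the finite, surjective, $\Gb\Lb_2$-equivariant quotient morphism $\pi \colon V \times V^* \to \QC(d)$. On $U_2 \otimes V$ the $\Gb\Lb_2$-orbits are exactly the three ``rank strata'' of a tensor: the origin; the rank-one locus, which is irreducible of dimension $3$ and fibres into $2$-dimensional orbits over $\PM(V)$; and the rank-two locus, which is a single free orbit of dimension $4=\dim\Gb\Lb_2$ and is open and dense. Since $\pi$ is finite, it sends orbits onto orbits without dropping dimensions; in particular $\QC(d)$ has a dense $\Gb\Lb_2$-orbit, namely the image of the rank-two locus, and the image of $\bigcup_j (U_2 \otimes \ell_j)$ is the singular locus $S$, where $\ell_1,\dots,\ell_d$ are the reflection hyperplanes of $W_d$ in $V$: indeed the description of $S$ recalled above identifies the non-free locus of $W_d$ on $V \times V^*$ with the union of the fixed loci of the $d$ reflections, and since any $W_d$-equivariant isomorphism $V \simeq V^*$ carries $\mathrm{Fix}_V(r)$ to $\mathrm{Fix}_{V^*}(r)$, each such fixed locus becomes $U_2 \otimes \ell_j$.

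For part (a) I would locate $(H, \e_1^d + \e_2^d)$ in the dense orbit by a single evaluation. At the rank-two point $(x,X) \in V \times V^*$ one finds $q = Q = 0$, $e = 1$, $a_0 = a_d = 1$ and $a_1 = \dots = a_{d-1} = 0$. Under the $\Gb\Lb_2$-equivariant embedding $\QC(d) \injto \sG\lG_2^{(1)} \oplus \Sym^d(U_2)$ the vanishing of the weight $\pm 2$ coordinates $q,Q$ forces the $\sG\lG_2^{(1)}$-component of $\pi(x,X)$ to be a non-zero multiple of the diagonal element $H$, while the vanishing of $a_1,\dots,a_{d-1}$ and non-vanishing of $a_0,a_d$ forces the $\Sym^d(U_2)$-component to be a non-zero combination of the two extreme weight vectors $\e_1^d$ and $\e_2^d$; using the relation $a_0a_d = e^d$, which holds on $\QC(d)$ along $\{q=Q=0\}$, a rescaling by the diagonal torus and the centre of $\Gb\Lb_2$ then brings $\pi(x,X)$ to $(H,\e_1^d+\e_2^d)$ on the nose. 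Since $\pi^{-1}(\pi(x,X)) = W_d\cdot(x,X)$ consists only of rank-two tensors, $(H,\e_1^d+\e_2^d)$ lies in the dense orbit, so $\QC(d) = \overline{\Gb\Lb_2\cdot(H,\e_1^d+\e_2^d)}$.

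For part (b) I would use that $S$ is closed, $\Gb\Lb_2$-stable and of dimension $2$. The dihedral group $W_d$ permutes $\ell_1,\dots,\ell_d$ transitively when $d$ is odd and in two orbits of cardinality $d/2$ when $d$ is even (the rotation subgroup shifts the hyperplanes in steps of two, the reflections preserving this partition); since $\Gb\Lb_2$ is transitive on $U_2\setminus\{0\}$, it follows that $S\setminus\{0\}$ is a single $\Gb\Lb_2$-orbit for $d$ odd and a union of two for $d$ even, and $S$ is the union of the corresponding one or two orbit closures (each obtained by adjoining $0$). To name the orbits I would evaluate $q,Q,e,a_0,\dots,a_d$ at a point $u\otimes\ell_j$ with $u$ generic: one gets $e^2-4qQ = 0$ (this function restricts on $\QC(d)$ to $\d^2$ with $\d = xX-yY$, which vanishes on every mirror), so the $\sG\lG_2^{(1)}$-component is nilpotent and, being non-zero for $u\neq 0$, is $\Gb\Lb_2$-conjugate to $E$; and the $a_i$ turn out to be, up to the normalisation built into $\langle a_0,\dots,a_d\rangle \simeq \Sym^d(U_2)$, the coordinates of a non-zero $d$-th power, that is, of a point in $\Gb\Lb_2\cdot\e_1^d$. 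Passing from $\ell_j$ to a mirror in the other $W_d$-class, when $d$ is even, multiplies the $\sG\lG_2^{(1)}$-component by a primitive $d$-th root of unity while leaving the $\Sym^d(U_2)$-component unchanged; combined with the facts that $\Gb\Lb_2$ has $2$-dimensional isotropy at every $(E,c\,\e_1^d)$ with $c\neq 0$, and that $(E,\e_1^d)$ and $(E,-\e_1^d)$ lie in a common $\Gb\Lb_2$-orbit exactly when $d$ is odd (via $-\Id$ when $d$ is odd; for $d$ even an element sending $\e_1^d$ to $-\e_1^d$ must scale $\e_1$ by some $\nu$ with $\nu^d=-1$, while the condition on the $\sG\lG_2^{(1)}$-component forces $\nu^2=1$, a contradiction), this identifies the orbits with $\Gb\Lb_2\cdot(E,\e_1^d)$ for $d$ odd and with $\Gb\Lb_2\cdot(E,\e_1^d)$ and $\Gb\Lb_2\cdot(E,-\e_1^d)$ for $d$ even.

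The hard part will be precisely this last identification: I need the isomorphisms $\langle q,Q,e\rangle\simeq\sG\lG_2^{(1)}$ and $\langle a_0,\dots,a_d\rangle\simeq\Sym^d(U_2)$ explicitly enough — including the scalars relating $a_i$ to $\e_1^i\e_2^{d-i}$ — that the points produced by $\pi$ really are the named representatives rather than merely orbit-equivalent substitutes, and, when $d$ is even, I must check on one point of each $W_d$-class of mirrors that the two resulting $\Gb\Lb_2$-orbits are genuinely distinct and are exactly those of $(E,\e_1^d)$ and $(E,-\e_1^d)$. Everything else reduces to straightforward computation with the explicit generators and relations.
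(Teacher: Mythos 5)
Your overall strategy is sound, and it is organized genuinely differently from the paper's. You stratify $U_2\otimes V$ by tensor rank upstairs and push the free, dense rank-two orbit and the planes $U_2\otimes \ell_j$ down along the finite $\Gb\Lb_2$-equivariant quotient map; the paper works entirely downstairs: for (a) it verifies that the stabilizer of the explicit point $(0,0,1,1,0,\dots,0,1)$ is finite (hence the orbit is $4$-dimensional, hence dense in the irreducible $4$-fold $\QC(d)$), and for (b) it notes that every non-dense non-zero orbit is $2$-dimensional with nilpotent $\sG\lG_2$-component and then evaluates the generators on the parametrization $x=\zeta^k y$, $Y=\zeta^k X$ of the non-free locus. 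Your route buys the clean statement that the components of $S\setminus\{0\}$ are indexed by the $W_d$-classes of mirrors (one class for $d$ odd, two for $d$ even); the paper's route avoids identifying $\pi(U_2\otimes\ell_j)$ abstractly. Both end up performing the same evaluation of $q,Q,e,a_0,\dots,a_d$ on the fixed loci, and your individual checks there (the vanishing pattern at a rank-two point, $a_0a_d=e^d$ on $\{q=Q=0\}$, the $d$-th-power form of the $a_j$ on a mirror, the multiplication of the nilpotent component by $\zeta$ when passing to an adjacent mirror) all agree with the paper's computations.

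The one step I would push back on is your proposed proof that $(E,\e_1^d)$ and $(E,-\e_1^d)$ lie in distinct orbits when $d$ is even. The implication ``$g\cdot E=E$ forces $\nu^2=1$'' holds if the point coordinates $(q,Q,e)$ transform by $\Ad\otimes\det$, but the paper's label $\sG\lG_2^{(1)}$ refers to the span of the coordinate \emph{functions}, and the value-tuples of points transform contragrediently (one checks on the centre of $\Gb\Lb_2$ that $M(q,Q,e)$ scales by $\det(g)^{-1}$ in one of the two natural conventions); with the other sign the same computation constrains $\delta$ rather than $\nu$, and the $\Sym^d$ factor then carries a compensating $\det^{-d}$ twist, so the argument as written is fragile. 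You flagged exactly this as the remaining work, so it is a loose end rather than an error, but there is a convention-free shortcut you could adopt from the paper: on the locus $q=e=0$ one has $a_d^2=4Q^d$, with $a_d=+2Q^{d/2}$ or $a_d=-2Q^{d/2}$ according to the parity of $k$; for $d$ even these are two distinct irreducible closed subsets of $S$, each automatically stable under the connected group $\Gb\Lb_2$, hence two distinct orbit closures, with no stabilizer computation and no normalization of bases required. (For $d$ odd, your observation that $-\Id$ interchanges the two candidate representatives is correct and is exactly the paper's remark.)
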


\begin{proof}
It follows immediately from the generating set for the ring of invariants and the above 
remarks that $(V\times V^*)/W_d$ is isomorphic (via a closed immersion) to a Zariski 
closed $\Gb\Lb_2$-stable subset of $\mathfrak{sl}_2^{(1)} \oplus \Sym^d(U_2)$.
Inspecting the relations for the generators, we see that if $q=Q=0$ and $e=1$ 
(corresponding to a non-zero multiple of $H$ in $\mathfrak{sl}_2$) then $a_j=0$ for $1\le j\le d-1$ 
and $a_0a_d=1$.
It is easy to see that the stabilizer of the point $(H,\e_1^d+\e_2^d)=(0,0,1,1,0,\ldots ,0,1)$ in 
$\Gb\Lb_2$ is finite, which (after suitably scaling a basis 
of $\Sym^d(U_2)$ if necessary) proves (a).


To establish (b), note that the singular locus has dimension 2, and all non-zero $\Gb\Lb_2$-orbits 
have dimension $2$ or $4$; the orbit is dense 
if and only if the projection to $\sG\lG_2^{(1)}$ is semisimple.
Thus, the singular locus is a union of orbits of elements of the form $(E,u)$ or $(0,u)$.
Now we recall that the singular locus is the image in $(V\times V^*)/W_d$ of the points with 
$x=\zeta^k y$, $Y=\zeta^k X$, in which case $q=\zeta^k y^2$, $e=2\zeta^k yX$, $Q=\zeta^k X^2$ 
and $a_j=2y^{d-j}X^j$.

If $q=e=Q=0$ then clearly $a_j=0$ for all $j$, so there are no non-zero orbits in the singular 
locus of the form $(0,u)$; for the orbits of the form $(E,u)$, we assume $q=e=0$ so that $y=0$.
Then $Q=\zeta^k X^2$, $a_0=\ldots = a_{d-1}=0$ and $a_d=2X^d$.
In particular, $a_d^2=4Q^d$, so there are at most two such orbits for each fixed non-zero value of $Q$; 
if $d$ is odd then the two different possibilities for $a_d$ are obviously conjugate via the 
$\mub_2$-action; if $d$ is even then $2Q^{d/2}=\pm a_d$ accordingly as 
$k$ is even or odd, so we do obtain two disjoint orbits.
\end{proof}


An immediate computation shows that $g(\d)=\det(g)\d$ for $g \in \Gb\Lb_2$ 
(this will be better explained in Remark~\ref{rem:d=4}) 
and that the subspace $\CM \b_0 \oplus \CM \b_1 \oplus \cdots \CM \b_d$ is $\Gb\Lb_2$-stable 
and is isomorphic to the representation $\Sym^d(U_2)$. In particular, $\d$ 
is $\Sb\Lb_2$-invariant (note that $D=\d^2$ corresponds to the determinant in $\sG\lG_2$). 
It follows that $\Gb\Lb_2$ acts on the blowup $\QCt(d)$ and stabilizes the 
affine open subset $\YC(d)=\Spec \CM[q,Q,e,b_0,\ldots ,b_d]$. Now, 
the previous remark shows that $\CM b_0 \oplus \CM b_1 \oplus \cdots \CM b_d$ 
is $\Gb\Lb_2$ stable and is isomorphic to the representation $\Sym^d(U_2)^{(-1)}$, 
where the exponent $^{(-1)}$ means that the action on $\Sym^d(U_2)^{(-1)}$ 
is the natural action on $\Sym^d(U_2)$ tensored with the inverse of the determinant. 
In particular, for the $\CM^\times$-action, $b_0$, $b_1$,\dots, $b_d$ have weight $d-2$. 

This discussion shows that $\YC(d)$ may be viewed as a $\Gb\Lb_2$-stable closed 
subvariety of $\sG\lG_2^{(1)} \oplus \Sym^d(U_2)^{(-1)}$. 


\begin{lem}\label{orblem2}
Through this embedding, $\YC(d)$ is the closure of the $\Gb\Lb_2$-orbit of $(H,\e_1^d-\e_2^d)$.
\end{lem}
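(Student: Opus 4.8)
The plan is to follow the pattern of the proof of Lemma~\ref{orblem1}(a): exhibit the pair $(H,\e_1^d-\e_2^d)$ as a point of $\YC(d)$, show that its $\Gb\Lb_2$-orbit is $4$-dimensional, and then invoke the irreducibility of $\YC(d)$ to conclude that this orbit is dense.

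First I would write the point down in the coordinates $(q,Q,e,b_0,\dots,b_d)$ on $\YC(d)\subset\CM^{d+4}$. Just as in the proof of Lemma~\ref{orblem1}, a non-zero multiple of $H$ in $\sG\lG_2^{(1)}$ is the locus $q=Q=0$, $e\neq 0$, and since $b_0$ and $b_d$ are the extreme weight vectors of the irreducible $\Gb\Lb_2$-module $\Sym^d(U_2)^{(-1)}=\CM b_0\oplus\cdots\oplus\CM b_d$, after suitably scaling the basis the element $(H,\e_1^d-\e_2^d)$ becomes $p:=(0,0,1,1,0,\dots,0,-1)$. I would then check that $p\in\YC(d)$ directly from the equations~\eqref{eq:bi}: upon setting $q=Q=0$ and $e=1$, the relations $eb_j=qb_{j+1}+Qb_{j-1}$ force $b_1=\cdots=b_{d-1}=0$; and in the relations $b_jb_k-b_{j-1}b_{k+1}=q^{d-k-1}Q^{j-1}\Psi_{k-j}(q,Q,e)$ the right-hand side vanishes for every $(j,k)$ with $1\le j\le k\le d-1$ except $(j,k)=(1,d-1)$, where it equals $\Psi_{d-2}(0,0,1)=1$ by~\eqref{eq:psi-e} and matches $b_1b_{d-1}-b_0b_d=0-(1)(-1)=1$; the remaining instances all read $0=0$. (The sign change in the second line of~\eqref{eq:bi} relative to~\eqref{eq:Q(d)} is exactly what turns $\e_1^d+\e_2^d$ into $\e_1^d-\e_2^d$ in this statement.)

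Next I would compute the stabilizer of $p$ in $\Gb\Lb_2$, in the same spirit as for Lemma~\ref{orblem1}. The stabilizer of $H$ for the twisted adjoint representation $\sG\lG_2^{(1)}$ consists of the diagonal matrices $\diag(a,a^{-1})$ and the antidiagonal matrices $\left(\begin{smallmatrix}0 & b\\ b^{-1} & 0\end{smallmatrix}\right)$ with $a,b\in\CM^\times$, a one-dimensional subgroup. On $\Sym^d(U_2)^{(-1)}$ the determinant twist is trivial on the diagonal torus, so $\diag(a,a^{-1})$ sends $\e_1^d-\e_2^d$ to $a^d\e_1^d-a^{-d}\e_2^d$, which is fixed precisely when $a^d=1$; an analogous computation forces $b^d=1$ for the antidiagonal elements. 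Thus the stabilizer of $p$ is finite (in fact a dihedral group of order $2d$), so $\Gb\Lb_2\cdot p$ has dimension $\dim\Gb\Lb_2=4$. Since $\YC(d)$ is $\Gb\Lb_2$-stable and contains $p$, the closure of this orbit is an irreducible $4$-dimensional closed subset of $\YC(d)$, and as $\YC(d)$ is irreducible of dimension $4$ (Proposition~\ref{prop:blowup}) the two must coincide.

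I do not anticipate a genuine obstacle: the argument reduces to the two routine checks above, namely that $p$ satisfies~\eqref{eq:bi} and that its $\Gb\Lb_2$-stabilizer is finite. The only slightly delicate point is the determinant twist built into $\Sym^d(U_2)^{(-1)}$ — one has to be sure it does not enlarge the stabilizer of $p$ — but since that character restricts trivially to the diagonal torus fixing $H$, it is harmless.
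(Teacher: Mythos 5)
Your proposal is correct and follows essentially the same route as the paper: the paper's proof likewise inspects the relations \eqref{eq:bi} at $q=Q=0$, $e=1$ to locate the point $(0,0,1,1,0,\dots,0,-1)$ and then repeats the argument of Lemma~\ref{orblem1}(a) (finite stabilizer, hence a $4$-dimensional orbit inside the irreducible $4$-dimensional variety $\YC(d)$). Your write-up merely makes explicit the two checks the paper leaves to the reader — that $p$ satisfies \eqref{eq:bi} and that its stabilizer is the dihedral group of order $2d$ — and both are carried out correctly, including the point about the determinant twist on $\Sym^d(U_2)^{(-1)}$.
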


\begin{proof}
Inspecting the relations \eqref{eq:bi}, we see that if $e=1$ and $q=Q=0$ then $b_j=0$ for 
$1\le j\le d-1$ and $b_0b_d=-1$.
We can now complete the proof with the same argument as in Lemma \ref{orblem1}(a).
\end{proof}


\begin{rema}\label{rem:d=4}
Via the inclusion $\sG\lG_2 \simeq \sG\oG_3 \subset \sG\lG_3$, we have a $\Gb\Lb_2$-equivariant isomorphism
\[
\begin{array}{ccc}
\sG\lG_2^{(1)} \oplus \Sym^4(U_2)^{(-1)} & \longto & \sG\lG_3\\[1em]
E & \longmapsto & \begin{psmallmatrix} 0 & \sqrt 2 & 0 \\ 0 & 0 & -\sqrt 2 \\ 0 & 0 & 0 \end{psmallmatrix}\\[1em]
H & \longmapsto & \begin{psmallmatrix} 2 & 0 & 0 \\ 0 & 0 & 0 \\ 0 & 0 & -2 \end{psmallmatrix}\\[1em]
F & \longmapsto & \begin{psmallmatrix} 0 & 0 & 0 \\ \sqrt 2 & 0 & 0 \\ 0 & -\sqrt 2 & 0 \end{psmallmatrix}\\[1em]
\e_1^4 & \longmapsto & \begin{psmallmatrix} 0 & 0 & 2 \\ 0 & 0 & 0 \\ 0 & 0 & 0 \end{psmallmatrix}\\[1em]
\e_1^3 \e_2 & \longmapsto & \begin{psmallmatrix} 0 & \sqrt 2 / 2 & 0 \\ 0 & 0 & \sqrt 2 / 2 \\ 0 & 0 & 0 \end{psmallmatrix}\\[1em]
\e_1^2 \e_2^2 & \longmapsto & \begin{psmallmatrix} -1/3 & 0 & 0 \\ 0 & 2/3 & 0 \\ 0 & 0 & -1/3 \end{psmallmatrix}\\[1em]
\e_1 \e_2^3 & \longmapsto & \begin{psmallmatrix} 0 & 0 & 0 \\ -\sqrt 2 / 2 & 0 & 0 \\ 0 & -\sqrt 2 / 2 & 0 \end{psmallmatrix}\\[1em]
\e_2^4 & \longmapsto & \begin{psmallmatrix} 0 & 0 & 0 \\ 0 & 0 & 0 \\ 2 & 0 & 0 \end{psmallmatrix}\\[1em]
\end{array}
\]
In particular,
\(
H + \e_1^4 - \e_2^4  \longmapsto  \begin{psmallmatrix} 2 & 0 & 2 \\ 0 & 0 & 0 \\ -2 & 0 & -2 \end{psmallmatrix}
\), which is of rank $1$ and trace $0$, i.e.~an element of the minimal nilpotent orbit.

Now, the $\Gb\Lb_2$-orbit of $H + \e_1^4 - \e_2^4$ has dimension $4$ by Lemma~\ref{orblem2} 
and is contained in the minimal nilpotent orbit which has also dimension $4$. 
This proves that $\YC(4)$ is isomorphic to the closure of the minimal nilpotent 
orbit of $\sG\lG_3$.
This provides a {\sc Magma} free proof of Theorem~\ref{theo:zd}(b) and~(d).\finl
\end{rema}


\subsection{Singular locus of ${\boldsymbol{\QC(d)}}$}

In the previous subsection, we have  identified $\QC(d)$ with a closed subvariety of 
$\sG\lG_2^{(1)} \oplus \Sym^d(U_2)$. It is easily seen that $I=(\d\CM[V \times V^*])^{W_d}$ 
is the ideal vanishing on elements of $\QC(d)$ whose projection onto $\sG\lG_2$ is nilpotent. 
Now, let $I_\sing$ denote the ideal of $\CM[V \times V^*]^{W_d}$ vanishing on the reduced 
singular locus $\QC_\sing(d)$ of $\QC(d)$.

%

\begin{lem}\label{singlem}
The ideal $I_\sing$ is generated by $I$ and $(\b_j\b_k)_{0 \le j,k \le d}$. 

\end{lem}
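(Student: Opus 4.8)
The plan is to establish the stated equality by proving two inclusions. Write $R=\CM[V\times V^*]^{W_d}$, let $J=I+(\b_j\b_k)_{0\le j,k\le d}$ be the ideal on the right, and let $\pi\colon V\times V^*\to\QC(d)$ denote the quotient morphism. The inclusion $J\subseteq I_\sing$ is the easy direction: the polynomials $\d,\b_0,\dots,\b_d$ are $W_d$-semi-invariants for the sign character, so $s_k(\b_j)=-\b_j$ for every reflection $s_k$, forcing each $\b_j$ (and $\d$) to vanish on the mirror $(V\times V^*)^{s_k}$. Hence every element of $I$, being a multiple of $\d$, and every product $\b_j\b_k\in R$, vanishes on $\bigcup_k(V\times V^*)^{s_k}$; since the non-trivial rotations of $W_d$ fix only $0$, this union is exactly $\pi^{-1}(\QC_\sing(d))$, and therefore $J\subseteq I_\sing$.

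For the reverse inclusion I would first move upstairs. Set $\nG=(\d,\b_0,\dots,\b_d)\subseteq\CM[V\times V^*]$; by \eqref{eq:module-bis} this is the ideal generated by all sign-semi-invariants, so $\nG^2$ is generated by products of pairs of semi-invariants, each of which lies in $R$. A Reynolds-operator argument — an arbitrary $\CM[V\times V^*]$-linear combination of these products that happens to be $W_d$-invariant can, after averaging, be rewritten as an $R$-linear combination of them — shows that $\nG^2\cap R$ is the ideal of $R$ generated by $\{\d^2,\d\b_j,\b_j\b_k\}$, which is $J$ since $\d^2=e^2-4qQ$ and the $\d\b_j$ are the standard generators of $I$ (cf.\ \eqref{eqDbi}). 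Thus it suffices to prove $I_\sing\subseteq\nG^2$. Take $f\in R$ vanishing on $Z:=\pi^{-1}(\QC_\sing(d))$. Since $f$ is $s_k$-invariant, expanding it in linear coordinates transverse to the mirror $(V\times V^*)^{s_k}$ (on which $s_k$ acts by $-1$) leaves only even-order terms, and vanishing on the mirror removes the constant term, so $f$ lies in the square of that mirror's ideal, for each $k$. On the other hand the $d$ mirrors meet pairwise only at the origin, so at every point of $Z\smallsetminus\{0\}$ two of $\d,\b_0,\dots,\b_d$ have independent differentials (else $\nG$ would cut out a hypersurface near that point, contradicting that $V(\nG)=Z$ has codimension $2$); hence near each such point $\nG$ is the ideal of $Z$ and $\nG^2$ is the square of the relevant mirror ideal. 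Combining the two observations, $f$ lies in $\nG^2$ locally at every point of $V\times V^*$ except possibly the origin.

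The main obstacle is exactly the origin: one must rule out that $\nG^2$ — equivalently $J$ — has an embedded associated prime there, which is what remains in order to conclude $f\in\nG^2$ globally. Since $V(J)=\QC_\sing(d)$ and $J$ already agrees with $I_\sing$ away from $0$, this amounts to showing that $R/J$ is reduced. I would prove this either by a direct depth computation on $R/J$ at the origin, using the explicit equations \eqref{eq:Q(d)} together with the relations \eqref{eqbi} and \eqref{eqDbi} to produce a non-zero-divisor in the maximal ideal of $0$, or by invoking Lemma~\ref{orblem1} to realise $\QC_\sing(d)$ (for odd $d$, and each component for even $d$) as the closure of the image of $v\mapsto(v^{\otimes2},v^{\otimes d})$ in $\Sym^2U_2\oplus\Sym^dU_2$ and verifying — using the $\SL_2$-action to cut the check down to finitely many weight spaces — that the quadrics appearing in $J$ generate the ideal of this image inside $\QC(d)$.
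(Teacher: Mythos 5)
Your strategy is genuinely different from the paper's (which decomposes $R/J$ as an $\Sb\Lb_2$-module via Kostant's theorem on the nullcone of $\sG\lG_2$ and checks that no highest weight vector vanishes on all of $\QC_\sing(d)$), and the reduction you perform is sound as far as it goes: the easy inclusion is correct, the identity $\nG^2\cap R=J$ via the Reynolds operator is correct, and the mirror-square argument showing $f\in\nG^2$ locally at every point of $Z\setminus\{0\}$ is a legitimate way to handle the punctured singular locus. But the proof stops exactly where the lemma's actual content begins. Having reduced everything to the statement that $J$ has no embedded component at the origin (equivalently, that $R/J$ is reduced, or that the irrelevant maximal ideal is not an associated prime of $R/J$), you do not prove it: you name two possible methods ("a direct depth computation\dots" or "verifying\dots that the quadrics appearing in $J$ generate the ideal") without executing either. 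This is not a routine verification — for $d$ even the singular locus has two components meeting at $0$, and there genuinely exist degree-$(d+2l)$ highest weight vectors (namely $Q^l(a_d\mp 2Q^m)$) vanishing identically on one component; the whole point of the paper's argument is the careful check that no such element vanishes on the \emph{whole} singular locus. Nothing in your write-up rules out an embedded prime at $0$ (compare $(x^2,xy)\subset\CM[x,y]$, an ideal of pairwise products with an embedded component at the origin), so the proof is incomplete at its decisive step.

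A secondary, fixable flaw: your justification that at each point of $Z\setminus\{0\}$ two of $\d,\b_0,\dots,\b_d$ have independent differentials — "else $\nG$ would cut out a hypersurface near that point" — is not a valid implication. The ideal $(x,\,x+y^2)\subset\CM[x,y]$ cuts out a codimension-$2$ locus although no two generators have independent differentials at the origin, and there $(x,\,x+y^2)=(x,y^2)$ is strictly smaller than the radical; this is precisely the failure mode you need to exclude. The conclusion you want is true, but it must be checked directly: e.g.\ on the mirror $x=\z^ky,\ Y=\z^kX$ one computes that $d\b_0$ and $d\b_d$ are independent when $y\neq 0$ and $X\neq 0$, and that $d\d$ supplies the missing direction when exactly one of $y,X$ vanishes. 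With that repair the local analysis away from $0$ is fine, but the global conclusion still hinges on the unproven statement at the origin.
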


\begin{proof}
Note that $\b_j\b_k=Db_jb_k$ obviously vanishes at any singular point.
We will use the action of $\Sb\Lb_2$ on the coordinate ring; 
clearly $I_\sing$ and the singular locus are stable under this action.

Since $\b_j\b_k \equiv a_ja_k$ modulo ${\mathbb C}[q,Q,e]$, it follows that 
${\mathbb C}[V\times V^*]^{W_d}/I_\sing$ is generated as a 
${\mathbb C}[q,Q,e]/(e^2-4qQ)$-module by $1, a_0,\ldots ,a_d$.
Now ${\mathbb C}[q,Q,e]/(e^2-4qQ)$ is the coordinate ring of the nullcone of $\mathfrak{sl}_2$; 
by a well-known theorem of Kostant~\cite[Theorem~0.9]{kostant}, 
it contains one copy of (the $\Sb\Lb_2$-module) $\Sym^{2l}(U_2)$ 
in each even degree $2l$.
Inspecting the relations~\eqref{eqbi} and using the fact that $e^2-4qQ\in I_\sing$, we find that 
the following monomials form a basis of the submodule of ${\mathbb C}[V\times V^*]^{W_d}/I_\sing$ 
spanned by $a_0,\ldots ,a_d$:
$$a_0, \ldots , a_d; \; qa_0, ea_0, ea_1, \ldots , ea_d, Qa_d; \; q^2 a_0, qea_0, qQa_0, qQa_1, \ldots, qQa_d, Qea_d, Q^2 a_d; \ldots$$
and in particular there is one copy of $\Sym^{d+2l}U$ in each degree $d+2l$.
For $d$ odd, this is especially useful: the quotient ${\mathbb C}[V\times V^*]^{W_d}/I_\sing$ 
decomposes over $\Sb\Lb_2$ as a sum of non-isomorphic irreducible submodules, 
each one appearing in its own degree; for $d$ even, the submodules $\Sym^{2l}(U_2)$ with $2l\leq d$ are doubled.
By the $\Sb\Lb_2$-action, it only remains to determine which highest weight vectors vanish on the 
singular locus.
These highest weight vectors are: $Q^j$ and $Q^j a_d$ for $d$ odd; $Q^j$ and any linear span of 
$Q^j a_d, Q^{j+d/2}$ if $d$ is even.
Our result will be proved if we can establish that none of these highest weight vectors vanishes 
on $\QC_\sing(d)$.

Recall that the singular locus is the set of orbits of points satisfying 
$x=\zeta^k y, Y=\zeta^k X$ for some $k$. 
Then $Q=\zeta^k X^2$ and $a_d=2X^d$.
None of these vanish, so for $d$ odd, we obtain our result immediately.
If $d=2m$ is even then $Q^m=\pm X^d$ accordingly as $k$ is even or odd; 
these two possibilities correspond to the two irreducible components of $\QC_\sing(d)$.
It follows that there do exist highest weight vectors in degree $d+2l$ vanishing 
on each of these irreducible components (specifically, $Q^l(a_d\mp 2Q^m)$).
However, no non-zero linear span of $Q^l a_d$ and $Q^{m+l}$ can vanish on the {\it whole} 
singular locus, so our proof is complete.
\end{proof}

\begin{coro} \label{coro:singlocus}
For $d=2m$ even, the ideals of the two irreducible components of the singular locus are:
$$J_1 = (D,a_0-2q^m, a_1-q^{m-1}e, a_2-2q^{m-1}Q, \ldots , a_d-2Q^m), \quad \mbox{and}$$
$$J_2 = (D,a_0+2q^m, a_1+q^{m-1}e, a_2+2q^{m-1}Q, \ldots , a_d+2Q^m).$$
\end{coro}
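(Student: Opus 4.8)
The plan is to show that each of the ideals $J_1$ and $J_2$ in the statement is the (prime) ideal of one of the two irreducible components of $\QC_\sing(d)$; these components are already available from Lemma~\ref{orblem1}(b).

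First I would record the following description of the singular locus, extracted from the proof of Lemma~\ref{orblem1}(b): for $d=2m$ one has $\QC_\sing(d)=C_+\cup C_-$, where $C_+$ (resp. $C_-$) is the closure in $\QC(d)$ of the locus of points whose coordinates are $q=\z^k y^2$, $e=2\z^k yX$, $Q=\z^k X^2$, $a_j=2y^{d-j}X^j$ with $k$ even (resp. odd) and $y,X\in\CM$. Each of $C_+,C_-$ is irreducible, and under the projection $(q,Q,e)$ it surjects onto the quadric cone $\{e^2=4qQ\}$, so it has dimension $2$ (and $\le\dim\QC_\sing(d)=2$, hence exactly $2$). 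Using $\z^m=-1$, i.e. $\z^{km}=(-1)^k$, a direct substitution into these formulas gives, on $C_\pm$,
\[
e^2-4qQ=0,\qquad a_{2i}=2(-1)^k q^{m-i}Q^i,\qquad a_{2i+1}=(-1)^k q^{m-i-1}Q^i e
\]
(the coefficient $1$ on the odd terms absorbs the factor $2$ already present in $e=2\z^k yX$). For $k$ even these are exactly the defining relations of $J_1$, and for $k$ odd those of $J_2$, so $J_1\subseteq I(C_+)$ and $J_2\subseteq I(C_-)$.

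Next I would promote these inclusions to equalities by a dimension count. Every generator of $J_1$ other than $D=e^2-4qQ$ expresses some $a_j$ as a polynomial in $q,Q,e$, so $\CM[\QC(d)]/J_1$ is generated by the images of $q,Q,e$; since moreover $D\in J_1$, the inclusion $\CM[q,Q,e]\hookrightarrow\CM[\QC(d)]$ induces a surjection $\CM[q,Q,e]/(e^2-4qQ)\twoheadrightarrow\CM[\QC(d)]/J_1$. The source is a finitely generated domain of Krull dimension $2$ (the quadratic form $e^2-4qQ$ has full rank, hence is irreducible over $\CM$), while $C_+\subseteq V(J_1)$ forces $\dim\CM[\QC(d)]/J_1\ge 2$; a surjection of a $2$-dimensional finitely generated domain onto an algebra of dimension $\ge 2$ has zero kernel, so it is an isomorphism. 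Hence $\CM[\QC(d)]/J_1$ is a $2$-dimensional domain, $J_1$ is prime, $V(J_1)$ is an irreducible surface, and since it contains the irreducible surface $C_+$ it equals it; as $J_1$ is radical, $J_1=I(C_+)$. The identical argument gives $J_2=I(C_-)$. Finally $C_+\ne C_-$ — for instance $a_0-2q^m$ vanishes on $C_+$ but not on $C_-$, as $q$ is not identically zero on $C_-$ — so $\{C_+,C_-\}$ is precisely the set of irreducible components of $\QC_\sing(d)$, which is the claim.

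The one step that needs care, and the likeliest place for a slip, is the explicit substitution computing the $a_j$ on $C_\pm$: one must keep track of the root-of-unity bookkeeping ($\z^{km}=(-1)^k$) and of the coefficients $1$ versus $2$ for odd versus even indices. Everything else is formal; in particular the dimension argument removes any need to verify by hand that the relations~\eqref{eq:Q(d)} defining $\QC(d)$ hold on $C_\pm$. (Alternatively, one could bypass Lemma~\ref{orblem1}(b) and argue from the proof of Lemma~\ref{singlem}: the $\Sb\Lb_2$-submodule of $\CM[\QC(d)]$ generated by the highest-weight vector $a_d-2Q^m$, resp. $a_d+2Q^m$, is spanned by the non-$D$ generators of $J_1$, resp. $J_2$, after which one runs the same dimension count; computing the action of the lowering operator there is equivalent to the substitution above.)
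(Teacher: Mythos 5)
Your proof is correct, and it takes a genuinely different route from the paper's. The paper argues algebraically: it invokes the $\Sb\Lb_2$-module decomposition of $\CM[V\times V^*]^{W_d}/I_\sing$ from the proof of Lemma~\ref{singlem} to reduce the corollary to the single inclusion $I_\sing\subseteq J_1\cap J_2$, and then verifies that inclusion by explicit identities writing each generator $Db_jb_k$ of $I_\sing$ as a combination of products of one generator of $J_1$ with one of $J_2$ (e.g.\ $Db_{2j}^2=a_{2j}^-a_{2j}^+$). You argue geometrically: starting from the parametrization $x=\z^ky$, $Y=\z^kX$ of the singular locus taken from the proof of Lemma~\ref{orblem1}, the substitution $\z^{km}=(-1)^k$ --- which you carry out correctly, including the coefficient $2$ for even indices versus $1$ for odd ones --- shows that the generators of $J_1$ (resp.\ $J_2$) vanish on the component $C_+$ with $k$ even (resp.\ $C_-$ with $k$ odd); you then upgrade $J_1\subseteq I(C_+)$ to an equality by noting that $\CM[\QC(d)]/J_1$ is a quotient of the two-dimensional domain $\CM[q,Q,e]/(e^2-4qQ)$ while having dimension at least $2$, so the surjection is an isomorphism, whence $J_1$ is prime, $V(J_1)$ is an irreducible surface containing the irreducible surface $C_+$, and $J_1=I(C_+)$. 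Your route is independent of Lemma~\ref{singlem} and of the product identities, and it yields as a bonus that the $J_i$ are prime with $\CM[\QC(d)]/J_i\cong\CM[q,Q,e]/(e^2-4qQ)$; the paper's route is more computational but records the explicit expressions of the generators of $I_\sing$ in terms of the $a_j^{\pm}$ alongside the corollary. Both arguments are complete.
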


For ease of notation, let $a_{2j}^{\pm}: = a_{2j}\pm 2q^{m-j}Q^j$ and $a_{2j+1}^{\pm}:= a_{2j+1}\pm q^{m-j-1}Q^{j+1}e$.
Note that: 
$$a_{2j}^{\pm} = (x^{m-j}Y^j\pm y^{m-j}X^j)^2\;\; \mbox{and}$$
$$a_{2j+1}^{\pm} = (x^{m-j}Y^j\pm y^{m-j}X^j)(x^{m-j-1}Y^{j+1}\pm y^{m-j-1}X^{j+1}),$$
so that the generators of $J_1$ (resp. $J_2$) can be expressed naturally in terms of 
the generating invariants (resp. semi-invariants) for the dihedral group of order $2m$.

\begin{proof}
By the proof of Lemma \ref{singlem}, it will suffice to show that $I_\sing \subset J_1$ and $I_\sing \subset J_2$.
It follows easily from \eqref{eqbi} that $Db_j\in J_1$ and $Db_j\in J_2$.
Since $Db_jb_k-Db_{j-1}b_{k+1}\in (D)$, it will therefore suffice to show that each of $Db_0^2, Db_0b_1, Db_1^2, \cdots , Db_d^2$ belongs to $J_1\cap J_2$.
We recall that $$Db_{2j}^2 = (x^{2m-2j}Y^{2j}-y^{2m-2j}X^{2j})^2 =  a_{2j}^{-}a_{2j}^+\in J_1\cap J_2.$$
Similarly, $$ Db_{2j+1}^2 = \frac{1}{4}\left(a_{2j}^+a_{2j+2}^-+2a_{2j+1}^+a_{2j+1}^-+a_{2j}^-a_{2j+2}^+\right)\;\;\mbox{and}\;\; Db_jb_{j+1}=\frac{1}{2}\left( a_j^+a_{j+1}^-+a_j^-a_{j+1}^+\right).$$
\end{proof}

\begin{prop}
The blowup $\QCt(d)$  of $\QC(d)$ at $I$ is isomorphic to the blowup of $\QC(d)$ at its reduced 
singular locus. Moreover, the symplectic form on $V \times V^*$ induces a symplectic 
structure on $\QCt(d) \setminus \{0\}$.
\end{prop}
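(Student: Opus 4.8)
The plan is to prove both assertions by lifting the constructions to $V\times V^*$ and exploiting the fact that blowing up commutes with the finite quotient by $W_d$.

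\emph{The isomorphism of blowups.} Recall that $\QCt(d)=\mathrm{Bl}_I(\QC(d))$ with $I=(\delta\CM[V\times V^*])^{W_d}$, and that by Lemma~\ref{singlem} the ideal of the reduced singular locus $\QC_{\sing}(d)$ is $I_{\sing}=I+(\beta_j\beta_k)_{0\le j,k\le d}$. Set $\JC=(\delta,\beta_0,\dots,\beta_d)\CM[V\times V^*]$; since $\delta$ and the $\beta_j$ are $W_d$-semi-invariants, $\JC$ is a $W_d$-stable ideal of $\CM[V\times V^*]$, as are $\delta\JC$ and $\JC^2$. Using that $I$ is generated over $\CM[V\times V^*]^{W_d}$ by $\delta^2,\delta\beta_0,\dots,\delta\beta_d$ (\S\ref{sub:blowup}), one gets $I\cdot\CM[V\times V^*]=\delta\JC$; and since $\JC^2$ is generated by the pairwise products of $\delta,\beta_0,\dots,\beta_d$, Lemma~\ref{singlem} gives $I_{\sing}\cdot\CM[V\times V^*]=\JC^2$.

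Next I would invoke the general fact that for a finite group $G$ acting on an affine scheme $\Spec A$ over a field of characteristic $0$, and any ideal $\mathfrak b\subseteq A^G$, one has a canonical isomorphism $\mathrm{Bl}_{\mathfrak b A}(\Spec A)/G\cong\mathrm{Bl}_{\mathfrak b}(\Spec A^G)$ over $\Spec A^G$: indeed the degree-$n$ part of the Rees algebra of $\mathfrak b A$ is $\mathfrak b^n A$, whose $G$-invariants equal $\mathfrak b^n$ (apply the Reynolds operator degreewise and use its $A^G$-linearity), so taking $G$-invariants of the Rees algebra of $\mathfrak b A$ returns that of $\mathfrak b$, and $\Proj$ is compatible with the quotient by the finite group $G$. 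Applying this with $\mathfrak b=I$ and with $\mathfrak b=I_{\sing}$, together with the standard \emph{canonical} (hence $W_d$-equivariant) identifications $\mathrm{Bl}_{f\mathfrak a}(X)=\mathrm{Bl}_{\mathfrak a}(X)$ for a non-zero-divisor $f$ and $\mathrm{Bl}_{\mathfrak a^{2}}(X)=\mathrm{Bl}_{\mathfrak a}(X)$ (Veronese), one obtains
\[
\QCt(d)=\mathrm{Bl}_{I}(\QC(d))\cong\mathrm{Bl}_{\delta\JC}(V\times V^*)/W_d\cong\mathrm{Bl}_{\JC}(V\times V^*)/W_d\cong\mathrm{Bl}_{\JC^{2}}(V\times V^*)/W_d\cong\mathrm{Bl}_{I_{\sing}}(\QC(d)),
\]
all isomorphisms being over $\QC(d)$; that is, $\QCt(d)$ is the blowup of $\QC(d)$ along its reduced singular locus.

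\emph{The symplectic form.} By Proposition~\ref{prop:blowup} and Corollary~\ref{coro:blowup}, $0$ is the unique singular point of $\QCt(d)$, so $\pi\colon\QCt(d)\setminus\{0\}\to\QC(d)\setminus\{0\}$ is a resolution of singularities of $\QC(d)\setminus\{0\}$, a variety with symplectic singularities (being open in the symplectic quotient singularity $\QC(d)$). By Beauville's definition of symplectic singularities, the holomorphic symplectic $2$-form on the smooth locus $\QC(d)\setminus S$ — the descent of the symplectic form on $V\times V^*$ — pulls back along $\pi$ and extends to a closed regular $2$-form $\Omega$ on $\QCt(d)\setminus\{0\}$; it remains to check that $\Omega$ is everywhere non-degenerate. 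The top power $\Omega^{\wedge2}$ is a global section of the canonical bundle of the smooth fourfold $\QCt(d)\setminus\{0\}$ which is nowhere zero on the dense open $\QCt(d)\setminus\pi^{-1}(S)$, so its zero scheme is an effective divisor supported on $\pi^{-1}(S)$. Along a nonzero point of $S$ the quotient $\QC(d)$ has an $A_1$-singularity transverse to $S$, so by the first part $\pi$ is, \'etale-locally over $S\setminus\{0\}$, the product of the minimal resolution of the $A_1$-singularity with $\CM^2$; there $\Omega$ is the product of the symplectic volume form on the (crepant) minimal resolution with the standard form on $\CM^2$, hence $\Omega^{\wedge2}$ does not vanish along any component of $\pi^{-1}(S\setminus\{0\})$, which is the only codimension-one part of $\pi^{-1}(S)$. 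Since $\pi^{-1}(0)$ is two-dimensional (for instance, in the chart $\YC(d)$ it is the affine cone over the rational normal curve, being cut out by \eqref{eq:bi} at $q=Q=e=0$), the set $\pi^{-1}(0)\setminus\{0\}$ has codimension $\ge2$ in $\QCt(d)\setminus\{0\}$ and so cannot support the zero divisor of $\Omega^{\wedge2}$. Therefore $\Omega^{\wedge2}$ is nowhere zero and $\Omega$ is symplectic.

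\emph{The main obstacle.} The extension of the form is formal once the earlier results are in hand, so the substantive point is the non-degeneracy: one must rule out that $\Omega$ degenerates along the codimension-one exceptional locus $\pi^{-1}(S\setminus\{0\})$, and it is exactly here that the explicit local model — minimal resolution of $A_1$ times $\CM^2$, coming from the first part of the Proposition together with the transverse $A_1$-structure of $\QC(d)$ along $S$ — is needed. (If one wished to avoid quoting Beauville's definition, one could instead build $\Omega$ directly: extend first across $\pi^{-1}(S\setminus\{0\})$ via the same local model, then across the codimension-$\ge2$ set $\pi^{-1}(0)\setminus\{0\}$ by normality; the non-degeneracy argument is unchanged.)
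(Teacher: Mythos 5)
Your proof is correct, and the two halves compare differently with the paper. For the first assertion you take a genuinely different route: you lift to $V\times V^*$, identify $I\cdot\CM[V\times V^*]=\d\JC$ and $I_\sing\cdot\CM[V\times V^*]=\JC^2$ with $\JC=(\d,\b_0,\dots,\b_d)$, and then combine the descent of blowups through the finite quotient with the canonical identifications $\mathrm{Bl}_{f\mathfrak{a}}=\mathrm{Bl}_{\mathfrak{a}}=\mathrm{Bl}_{\mathfrak{a}^2}$. The paper never leaves the quotient: it writes $\mathrm{Bl}_{I_\sing}(\QC(d))$ as the closure of the graph of $x\mapsto[D:Db_0:\cdots:Db_d:Db_0^2:Db_0b_1:\cdots:Db_d^2]$ and observes that, after multiplying every coordinate by $D$, this is exactly the degree-$2$ Veronese of $[D:Db_0:\cdots:Db_d]$, so the two graph closures coincide. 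Equivalently, one has the downstairs identity $D\cdot I_\sing=I^2$ in $\CM[V\times V^*]^{W_d}$, which yields the conclusion from your two blowup identities alone and lets you bypass the $\Proj$-commutes-with-finite-quotients step --- the one ingredient in your chain that is standard (Reynolds operator degreewise, plus homogeneous prime avoidance to cover $\Proj$ by invariant affines) but that you assert without justification. For the second assertion your argument is essentially the paper's: the paper first produces the symplectic form on $\QCt(d)\setminus\pi^{-1}(0)$ from the fact that the blowup of the reduced singular locus restricts over $\QC_\sing(d)\setminus\{0\}$ to the (crepant) minimal resolution of the transverse $A_1$-singularity, and then extends it across the $2$-dimensional set $\pi^{-1}(0)$; that is precisely the alternative you sketch in your closing parenthesis, and your main text merely reorders it by extending the $2$-form everywhere first (via Beauville's definition applied to the open subset $\QC(d)\setminus\{0\}$) and then checking nondegeneracy of $\Omega^{\wedge 2}$ by the same local model and codimension count.
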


\begin{proof}
The first assertion follows from the description of the blowup at $I_\sing$ as the closure of the 
set of elements of ${\mathbb A}^{d+4}\times {\mathbb P}^{(d+1)(d+2)/2}$ of the form 
$$((q,Q,e,a_0,\ldots ,a_d),[D:Db_0:\ldots : Db_d: Db_0^2: Db_0b_1: \ldots : Db_d^2]),$$
where at least one of $D, Db_0, \ldots , Db_d^2$ is non-zero; the projection to 
${\mathbb P}^{(d+1)(d+2)/2}$ is just the Veronese embedding applied to 
$[D:Db_0:\ldots : Db_d]\in{\mathbb P}^{d+1}$.


Let us now prove the second statement. We can consider $\QC(d)$ as the $\mub_2$-quotient of 
$(V\times V^*)/\Gamma_d$. Since there are no non-zero fixed points for the action of 
$\Gamma_d$, the quotient  $(V\times V^*)/\Gamma_d$ has an isolated singularity.
For $p \in \QC_\sing(d) \setminus\{0\}$,  $\QC(d)$ has $A_1$-singularity at $s$.  
As $\pi: \QCt(d) \to \QC(d)$ is the blowup of the singular locus,  it restricts to the minimal 
resolution to $\QC(d) \setminus\{0\}$.  This implies that $\QCt(d) \setminus \pi^{-1}(0)$ 
admits a symplectic structure (say $\tilde{\omega}$) coming from  that on $V \times V^*$.  
As $\pi^{-1}(0)$  has dimension $2$,  $\tilde{\omega}$ extends to a symplectic structure on the 
smooth locus of $\QCt(d)$, which is just $\QCt(d) \setminus \{0\}$ by Corollary~\ref{coro:blowup}.
\end{proof}


By Corollary~\ref{coro:blowup}, the singularities $(\QCt(d),0)$ and $(\ZC(d),0)$ 
are analytically isomorphic but this does not ensure that the variety $\QCt(d)$ 
inherits a symplectic form on its smooth locus. Thanks to the previous 
proposition, we can now deduce:

%


\begin{coro}\label{coro:symplectic}
The variety $\QCt(d)$ has  an isolated  symplectic singularity.
\end{coro}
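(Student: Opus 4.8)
The plan is to assemble the corollary from three facts already in hand: that $\QCt(d)$ is a normal variety (Corollary~\ref{coro:blowup}), that its unique singular point is $0$ (same corollary), and that the symplectic form on $V \times V^*$ descends to a symplectic $2$-form on the smooth locus $\QCt(d) \setminus \{0\}$ (the preceding proposition). Recall that a symplectic singularity in the sense of Beauville is, by definition, a normal variety $\XC$ carrying a holomorphic symplectic form $\omega$ on its smooth locus $\XC_{\mathrm{sm}}$ such that, for one (equivalently any) resolution $\pi \colon \tilde{\XC} \to \XC$, the pullback $\pi^*\omega$ extends to a regular (possibly degenerate) $2$-form on all of $\tilde{\XC}$.

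First I would invoke normality of $\QCt(d)$ and the identification of its singular locus with the single point $0$, both from Corollary~\ref{coro:blowup}; this already shows $\QCt(d)$ has an \emph{isolated} singularity and that the smooth locus is exactly $\QCt(d) \setminus \{0\}$. Next I would take the symplectic form $\tilde{\omega}$ on $\QCt(d) \setminus \{0\}$ produced in the previous proposition. To check the extension condition, pick any resolution $\rho \colon \Xhat \to \QCt(d)$; since $\QCt(d)$ is normal of dimension $4$ and $0$ is its only singular point, the exceptional locus $\rho^{-1}(0)$ has codimension $\ge 1$ in $\Xhat$, and $\rho$ restricts to an isomorphism over $\QCt(d) \setminus \{0\}$, so $\rho^*\tilde{\omega}$ is a regular $2$-form on $\Xhat \setminus \rho^{-1}(0)$. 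The key point is that $\QCt(d)$ has rational Gorenstein (indeed symplectic, via the analytic isomorphism with $\ZC(d)$) singularities, so by Flenner's extension theorem — or, more elementarily, because the local analytic isomorphism $(\QCt(d),0) \simeq (\ZC(d),0)$ of \eqref{eq:equivalence} together with Theorem~\ref{theo:zd}(c) tells us $(\ZC(d),0)$ already satisfies the extension property — the form $\rho^*\tilde{\omega}$ extends regularly across $\rho^{-1}(0)$.

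Actually, the cleanest route avoids re-doing any extension analysis: by Corollary~\ref{coro:blowup}, $\QCt(d)$ is normal with a unique singular point $0$, and the completed local rings at $0$ of $\QCt(d)$ and $\ZC(d)$ are isomorphic; by Theorem~\ref{theo:zd}(c), $(\ZC(d),0)$ is a symplectic singularity. Being a symplectic singularity is a local analytic condition on the singular point (given normality and given that a symplectic form exists on the smooth locus, which the previous proposition supplies for $\QCt(d)$), so $\QCt(d)$ is a symplectic singularity as well. I would write the proof in this form: cite Corollary~\ref{coro:blowup} for normality and the isolated singular point, cite the previous proposition for the symplectic form on $\QCt(d)\setminus\{0\}$, and cite Theorem~\ref{theo:zd}(c) together with the local analytic isomorphism \eqref{eq:equivalence} to conclude the extension-across-the-resolution condition holds. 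The word ``isolated'' in the statement is then immediate from the uniqueness of the singular point.

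The main obstacle — really the only subtlety — is making rigorous the claim that ``symplectic singularity'' transfers across a local analytic isomorphism at the singular point. This requires knowing that the defining condition (extension of the pullback of $\omega$ to a resolution) can be checked on the formal or analytic germ at each singular point, which is standard (it follows from Flenner's theorem that on a normal variety with rational singularities a symplectic form on the smooth locus automatically extends to any resolution, so that once $\QCt(d)$ is known to be normal with rational singularities — inherited from $\ZC(d)$ via \eqref{eq:equivalence} — and to carry a symplectic form on its smooth locus, it is automatically a symplectic singularity). I would state this reduction explicitly and attribute the extension mechanism to Flenner / Namikawa, then the corollary follows with no further computation.
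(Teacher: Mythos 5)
Your proposal is correct and follows essentially the same route as the paper: the corollary is deduced from Corollary~\ref{coro:blowup} (normality and the unique singular point $0$), the preceding proposition (which supplies the global symplectic form on $\QCt(d)\setminus\{0\}$ — precisely the ingredient the paper notes is \emph{not} furnished by the local analytic isomorphism alone), and the transfer of the extension condition through the analytic isomorphism $(\QCt(d),0)\simeq(\ZC(d),0)$ of \eqref{eq:equivalence} combined with Theorem~\ref{theo:zd}(c). Your alternative appeal to Flenner's extension theorem (valid here since the singular locus is a point in a $4$-fold, so $2\le\codim-2$) is a perfectly good shortcut but not needed.
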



 The singularity $\QCt(d)$ does not admit a projective symplectic resolution. 
 Since the blowup morphism is crepant (being Poisson) it would imply by composition that $(V \times V^*)/W_d$ 
 admits a projective symplectic resolution, which is not the case.  In fact, 
 this also follows from the general fact that the only $4$-dimensional isolated symplectic 
 singularity admitting a symplectic resolution is analytically isomorphic to the minimal 
 nilpotent orbit closure in $\sG\lG_3$ (see~\cite[Theorem~1.1]{WW}).
 
 Note that the symplectic singularity $(\ZC(d),0)$ does not admit a contracting $\CM^\times$-action. 
 Therefore, the fact that symplectic singularity $(\YC(d),0)$ is locally analytically isomorphic 
 to $(\ZC(d),0)$ and admits a contracting $\CM^\times$-action can be viewed as 
 confirmation of a general conjecture of Kaledin~\cite[Conjecture~1.8]{KaledinSurvey} 
 in this particular case.


\section{Local fundamental group}

In order to compute the local fundamental group of $\YC(d)$ around $0$, 
we will consider the following smooth irreducible 
surface 
$$\SC=\{(x,y,z) \in \CM^3~|~x^2- y^2z=1\}.$$ 
We define the morphism $\ph : \SC \longto \YC(d) \setminus \{0\}$ by
$$\ph(x,y,z)=(1,-z,0,y,x,yz,xz,\dots,\underbrace{yz^k}_{b_{2k}},
\underbrace{xz^k}_{b_{2k+1}},\dots).$$
It's easy to see that $\ph$ is a closed immersion, whose image is equal to 
$$\{(q,Q,e,b_0,b_1,\dots,b_d) \in \YC(d)~|~q=1\text{~and~}e=0\}.$$


\begin{lem}\label{lem:pi1-x}
The smooth surface $\SC$ is simply-connected.
\end{lem}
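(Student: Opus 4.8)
The plan is to apply the Seifert--van Kampen theorem to an open cover of $\SC$ adapted to the first projection $h\colon \SC\longto\CM$, $(x,y,z)\longmapsto x$. The fibre $h^{-1}(x_0)$ is the affine plane curve $\{(y,z)\mid y^2z=x_0^2-1\}$, which is isomorphic to $\CM^\times$ when $x_0\neq\pm1$ (using that then $y\neq 0$ on the fibre) and is the nodal union of the two lines $\{y=0\}$ and $\{z=0\}$ when $x_0=\pm1$; the idea is that these two special fibres are contractible and that it is precisely they which trivialise $\pi_1$.

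First I would record the pieces of the cover. Away from $\pm1$ the morphism $(x,y,z)\mapsto(x,y)$ identifies
$$U_0:=h^{-1}\bigl(\CM\setminus\{1,-1\}\bigr)\ \simeq\ \bigl(\CM\setminus\{1,-1\}\bigr)\times\CM^\times,$$
so that $\pi_1(U_0)$ is the product of the free group on two meridians $\alpha,\beta$ around $1$ and $-1$ with the infinite cyclic group generated by the loop $\gamma$ in $\CM^\times$. Fixing $\e>0$ small and letting $D_{\pm1}$ be the connected component of $\{x\mid|x^2-1|<\e\}$ containing $\pm1$, the map $x\mapsto x^2-1$ restricts to a biholomorphism $D_{\pm1}\longiso\{w\mid|w|<\e\}$, and hence $(x,y,z)\mapsto(y,z)$ identifies
$$U_{\pm1}:=h^{-1}(D_{\pm1})\ \simeq\ \{(y,z)\in\CM^2\mid|y^2z|<\e\}.$$
This last region deformation retracts onto $\{y=0\}\simeq\CM$ via $(y,z)\mapsto((1-t)y,z)$, so $U_{\pm1}$ is contractible. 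After shrinking $\e$ so that $D_1\cap D_{-1}=\vide$, the triple $\{U_0,U_1,U_{-1}\}$ is an open cover of $\SC$ with $U_1\cap U_{-1}=\vide$ and with each $U_0\cap U_{\pm1}\simeq(D_{\pm1}\setminus\{\pm1\})\times\CM^\times$ connected, of fundamental group the free abelian group of rank $2$ on $\gamma$ and the corresponding meridian.

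It then remains to feed this into van Kampen twice. Gluing $U_1$ onto $U_0$: as $U_1$ is simply connected and the two generators of $\pi_1(U_0\cap U_1)$ map to $\alpha$ and $\gamma$ inside $\pi_1(U_0)$, the van Kampen pushout is $\pi_1(U_0)$ modulo the normal closure of $\alpha$ and $\gamma$, that is, the infinite cyclic group generated by $\beta$. Gluing $U_{-1}$ onto $U_0\cup U_1$ (observe that $(U_0\cup U_1)\cap U_{-1}=U_0\cap U_{-1}$, which is connected): the meridian $\beta$ now generates $\pi_1(U_0\cup U_1)$ while $\gamma$ has already become trivial, so $U_{-1}$ being simply connected forces $\pi_1(\SC)=1$. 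I expect the only genuinely delicate point to be the local description of $U_{\pm1}$ near the special fibres — establishing the isomorphism $U_{\pm1}\simeq\{|y^2z|<\e\}$, which rests on the branch $x=\sqrt{1+y^2z}$ being single-valued on that region (hence the precise choice of $D_{\pm1}$), and checking that the deformation retraction above does stay inside $\SC$; the remaining bookkeeping is routine.
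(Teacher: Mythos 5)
Your argument is correct, but it takes a genuinely different route from the paper. The paper works with the Zariski-open subset $\SC^\circ=\{y\neq 0\}\simeq\CM\times\CM^\times$, invokes the general fact that $\pi_1(\SC^\circ)\to\pi_1(\SC)$ is surjective for the complement of a proper closed subvariety of a smooth connected variety, and then kills the single generator $t\mapsto(1,e^{2\pi i t},0)$ by observing that it lies on the contractible line $\{1\}\times\CM\times\{0\}\subset\SC$ --- three lines in total. You instead run Seifert--van Kampen on a cover adapted to the projection $(x,y,z)\mapsto x$, using the contractibility of the tubular pieces $U_{\pm1}\simeq\{|y^2z|<\e\}$ around the two degenerate fibres to kill both the meridians and the fibre class $\gamma$. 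Your computations check out: the identification $U_{\pm1}\simeq\{|y^2z|<\e\}$ via the single-valued branch of $\sqrt{1+y^2z}$ on $D_{\pm1}$, the retraction $(y,z)\mapsto((1-t)y,z)$ staying in the region, the connectedness of the intersections, and the two successive quotients $(F_2\times\ZM)/\langle\!\langle\alpha,\gamma\rangle\!\rangle\cong\ZM$ and $\ZM/\langle\!\langle\beta\rangle\!\rangle=1$ are all fine. What your approach buys is self-containedness --- it avoids the surjectivity theorem for complements of analytic subsets, which the paper cites from Godbillon and also reuses later in the proof of Proposition~\ref{prop:pi1} --- at the cost of more bookkeeping; the paper's argument is shorter precisely because it is willing to quote that result and only needs to exhibit one contractible curve through the generating loop.
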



\begin{proof}
Let $p_0=(1,1,0) \in \SC$. 
Let $\SC^\circ$ denote the open subset $\{(x,y,z) \in \SC~|~y \neq 0\}$. 
Then $p_0 \in \SC^\circ$ and the map $\pi_1(\SC^\circ,p_0) \longto \pi_1(\SC,p_0)$ 
is surjective~\cite[Theorem~2.3]{godbillon}. Now, $\CM \times \CM^\times \simeq \SC^\circ$ 
through the variables $(x,y)$ and the map $(x,y) \longmapsto (x,y,y^{-2}(x^2-1))$, 
so $\pi_1(\SC^\circ,p_0)$ is generated by the loop $\g$ defined by 
$\g(t)=(1,e^{2\sqrt{-1}\pi t},0)$. 
Then it remains to show that $\g$ is homotopic, in $\SC$, to the trivial loop. 
But $\g$ is contained in $\{1\} \times \CM \times \{0\} \subset \SC$, 
so the result follows.
\end{proof}

Coming back to our aim of proving Theorem~\ref{theo:main 1}, 
it remains to prove that the local fundamental group of the singularity 
$(\YC(d),0)$ is trivial. Moreover, the existence of a contracting $\CM^\times$-action 
implies that the local fundamental group of $(\YC(d),0)$ is just the 
fundamental group of $\YC(d)\setminus\{0\}$. 
The above discussion shows that it only remains to check the 
following proposition, whose proof uses in an essential way the 
$\Sb\Lb_2$-action:


\begin{prop}\label{prop:pi1}
The variety $\YC(d)\setminus\{0\}$ is simply-connected.
\end{prop}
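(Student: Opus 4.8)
The plan is to exploit the $\Sb\Lb_2$-action on $\YC(d)$ together with the simply-connected surface $\SC$ constructed above, using the fact that $\SC$ sits inside $\YC(d)\setminus\{0\}$ as the slice $\{q=1,\ e=0\}$. The overall strategy is to show that the inclusion $\SC\hookrightarrow\YC(d)\setminus\{0\}$ induces a surjection on $\pi_1$, and then invoke Lemma~\ref{lem:pi1-x} to conclude that $\pi_1(\YC(d)\setminus\{0\})$ is trivial. To get the surjectivity I would use the $\Sb\Lb_2$-action to ``sweep out'' all of $\YC(d)\setminus\{0\}$ from $\SC$: concretely, one considers the action map $\Sb\Lb_2\times\SC\longto\YC(d)\setminus\{0\}$, $(g,p)\longmapsto g\cdot p$, and argues that its image is dense, or better, that it is dominant with the property that every point of $\YC(d)\setminus\{0\}$ lies in the $\Sb\Lb_2$-orbit of a point of $\SC$ (or a suitable limit thereof).

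First I would set up the geometry of the $\Sb\Lb_2$-action on $\YC(d)$. Recall that $\YC(d)\subset\sG\lG_2^{(1)}\oplus\Sym^d(U_2)^{(-1)}$ is the closure of the $\Gb\Lb_2$-orbit of $(H,\e_1^d-\e_2^d)$ (Lemma~\ref{orblem2}), and the first projection $\pi:\YC(d)\to\sG\lG_2^{(1)}$ records the coordinates $(q,Q,e)$, with $q^2$-type invariant being $D=e^2-4qQ$, the determinant on $\sG\lG_2$. The key dichotomy is whether $D\neq 0$ (projection to $\sG\lG_2$ is regular semisimple) or $D=0$ (projection is nilpotent, possibly zero). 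On the open set $\{D\neq 0\}$, every fibre of $\pi$ meets the slice $\{q=1,e=0\}$ in the $\Sb\Lb_2$-orbit of a point with $q=1,e=0$, $Q=-D/4\neq 0$, i.e.\ a point of $\SC$ (after matching coordinates: on $\SC$ one has $q=1$, $e=0$, $Q=-z$, and the $b_j$ given by the closed immersion $\ph$); indeed any regular semisimple element of $\sG\lG_2^{(1)}$ is $\Sb\Lb_2$-conjugate to one with $q=1,e=0$, and the remaining coordinates $b_j$ are then determined by equations~\eqref{eq:bi}, matching $\ph$. So $\Sb\Lb_2\cdot\SC\supseteq\YC(d)\cap\{D\neq 0\}$, which is open and dense.

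Next I would handle the surjectivity on $\pi_1$. Since $\YC(d)\setminus\{0\}$ is irreducible (normal by Proposition~\ref{prop:blowup}) and $\Sb\Lb_2$ is connected and simply-connected, and $\SC$ is simply-connected, a clean way is: the dominant morphism $\Sb\Lb_2\times\SC\to\YC(d)\setminus\{0\}$ has simply-connected source, hence its image $U$ (a constructible dense subset) generates $\pi_1$ of $\YC(d)\setminus\{0\}$; more precisely, for any smooth point the complement of a proper closed subset has surjective $\pi_1$ onto $\pi_1$ of the whole (as in \cite[Theorem~2.3]{godbillon}, already used for $\SC$), and over the open dense set $\{D\ne 0\}$ the action map restricted appropriately is surjective, so $\pi_1(\YC(d)\cap\{D\ne 0\})$ surjects onto $\pi_1(\YC(d)\setminus\{0\})$, and the former is a quotient of $\pi_1(\Sb\Lb_2\times\SC)=1$. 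One must be slightly careful because $\YC(d)\cap\{D\neq 0\}$ need not be exactly $\Sb\Lb_2\cdot\SC$ (there could be additional fibre components, or the action might not be transitive on fibres); I would address this by using the explicit equations~\eqref{eq:bi} to check that on $\{q\ne 0\}$ the whole $\YC(d)$ is the $\Sb\Lb_2$-sweep (or even the $\Gb_a$-sweep via strictly-upper-triangular unipotents, which are themselves simply-connected) of the affine chart $\{q=1\}$, and that $\{q=1\}\cap\YC(d)$ is connected and dominated by $\SC$ via the $\Gb_m$-part plus a residual connected piece — then a Zariski-open dense, simply-connected image suffices.

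The main obstacle I expect is controlling the boundary locus $\{D=0\}\setminus\{0\}$ (and $\{q=0\}$): one needs that removing it does not change $\pi_1$, which follows if its complement is dense and $\YC(d)\setminus\{0\}$ is normal (hence unibranch, so $\pi_1$ of a dense open surjects), but one should verify $\{D=0\}$ has codimension $\ge 1$ in $\YC(d)$ (it does, being a proper closed subset of an irreducible $4$-fold) — that is automatic. The subtler point is to genuinely realize every regular-semisimple-projection fibre, or a dense union of them, inside $\Sb\Lb_2\cdot\SC$: this reduces to the statement that the centralizer in $\Sb\Lb_2$ of a regular semisimple element acts on the fibre coordinates $(b_j)$ compatibly with~\eqref{eq:bi}, so that the fibre over such a point is a single $\Sb\Lb_2$-orbit of $\SC$-points (up to the finite ambiguity which does not affect $\pi_1$-surjectivity). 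Once that is pinned down via the explicit equations, the conclusion $\pi_1(\YC(d)\setminus\{0\})=1$ is immediate from Lemma~\ref{lem:pi1-x}.
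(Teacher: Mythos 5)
Your ingredients (the $\Sb\Lb_2$-action, the surface $\SC$, the regular semisimple locus $\UC=\YC(d)\cap\{D\neq 0\}$, and the fact that a dense Zariski-open subset of the smooth locus has surjective $\pi_1$) are the same as the paper's, but the central step of your argument is invalid. You claim that $\pi_1(\YC(d)\cap\{D\neq 0\})$ is a quotient of $\pi_1(\Sb\Lb_2\times\SC)=1$ because the action map is surjective onto it (or onto a dense subset). A surjective morphism with simply connected source does not force the target to be simply connected --- covering maps and free quotients by finite groups are counterexamples --- and the surjectivity of $\pi_1(X)\to\pi_1(Y)$ for a fibration requires \emph{connected} fibres. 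Here the fibres are disconnected: the stabilizer in $\Sb\Lb_2$ of a point of $\UC$ lying over a fixed regular semisimple matrix is a copy of $\mub_d$ inside a maximal torus (this is Fact~1 of the paper's proof), so the action map is a $d$-sheeted covering in the fibre direction. In fact $\pi_1(\UC)$ is a nontrivial extension of $\ZM$ by $\mub_d$, coming from the principal fibration $\det\circ\mathrm{pr}\colon\UC\longto\CM^\times$ with fibre $\Sb\Lb_2/\mub_d$; so the statement your argument needs --- that $\UC$ is simply connected --- is false.

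Relatedly, you treat the locus $\{D=0\}\setminus\{0\}$ as harmless (``removing it does not change $\pi_1$ \dots\ that is automatic''). Removing a closed subset can only enlarge the fundamental group, and here it genuinely does: $\pi_1(\UC)\neq 1$ while $\pi_1(\YC(d)\setminus\{0\})=1$. The real content of the proposition is that the generators of $\pi_1(\UC)$ die in the larger space, and the contraction happens precisely through $\{D=0\}$: on the slice $\ph(\SC)=\{q=1,\ e=0\}$ one has $D=4z$, so $\SC$ meets $\{D=0\}$ along $\{z=0\}$ --- this is not a defect of the slice but the essential mechanism. The paper's proof computes $\pi_1(\UC)$ explicitly as generated by two loops (one generating the $\mub_d$ of the fibre, one lifting the generator of $\pi_1(\CM^\times)$) and then checks that both lie inside a single $\Sb\Lb_2$-translate of $\ph(\SC)$, which is simply connected by Lemma~\ref{lem:pi1-x}. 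That explicit identification of generators and their contraction through the nilpotent locus is the step your sweeping argument cannot reproduce.
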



\begin{proof}
We fix a base point $p=(0,0,1,1,0,\dots,0,1) \in \YC(d) \setminus\{0\}$ and 
we want to show that $\pi_1(\YC(d) \setminus \{0\},p)=1$. 
We divide the proof into several steps.

\medskip

\noindent{$\bullet$ \it First step: projection to $\sG\lG_2$.} 
%
First, if $(q,Q,e) \in \CM^3$, we denote by $M(q,Q,e)$ the matrix 
$$M(q,Q,e)=\begin{pmatrix} e & 2Q \\ - 2q & -e \end{pmatrix} \in \sG\lG_2.$$
Viewing $\YC(d)$ as a closed subvariety of $\sG\lG_2 \oplus \Sym^d(U_2)$ as 
in Lemma~\ref{orblem2} (we have forgotten the exponents $^{(1)}$ and $^{(-1)}$ 
in $\sG\lG_2 \oplus \Sym^d(U_2)$ because we are only considering the $\Sb\Lb_2$-action, 
and not the $\Gb\Lb_2$-action), the map ${\rm pr} : \YC(d) \to \sG\lG_2$ obtained by 
projecting to the first component is given by 
$$
{\rm pr}(q,Q,e,b_0,b_1,\dots,b_d)=M(q,Q,e).
$$
It is $\Sb\Lb_2$-equivariant. 
Also, the matrices ${\rm pr}(p)=M(0,0,1)=\diag(1,-1)=H$ and $M(1,-1/4,0)$ are in the 
same $\Sb\Lb_2(\CM)$-orbit because they have the same non-zero determinant. We denote 
by $g_0$ an element of $\Sb\Lb_2$ such that $g_0 M(0,0,1) g_0^{-1} = M(1,-1/4,0)$. 

\def\opp{{\mathrm{op}}}
Another easy property of this action is the description of its restriction 
to the diagonal torus of $\Sb\Lb_2$: let $\D^\opp : \CM^\times \longto \Sb\Lb_2$, 
$\xi \mapsto \diag(\xi,\xi^{-1})$. Then 
\equat\label{eq:action-xi}
\D^\opp(\xi) \cdot (q,Q,e,b_0,b_1,\dots,b_d)=(\xi^{-2}q,\xi^2 Q,e,\xi^{-d} b_0, 
\xi^{2-d} b_1,\dots, \xi^{d} b_d).
\endequat

\medskip

\noindent{$\bullet$ \it Second step: fibration.} 
Now, let $\UC$ denote the open subset $(\det \circ {\rm pr})^{-1}(\CM^\times)$ of $\YC(d)$ 
and let $\VC$ denote the open subset $\det^{-1}(\CM^\times)$ of $\sG\lG_2(\CM)$, 
so that $\UC={\rm pr}^{-1}(\VC)$. Then $\VC$ is the open subset of regular semisimple 
elements. Moreover:

\medskip

\begin{quotation}
\noindent{\bf Fact 1.} {\it The map $\det \circ {\rm pr} : \UC \to \CM^\times$ is a principal 
fibration with fiber isomorphic to $\Sb\Lb_2(\CM)/\D^\opp(\mub_d)$.} 

\begin{proof}
Let $\z \in \CM^\times$. Let $\xi$ be a square root of $-\z$ and set $\t=\diag(\xi,-\xi)$, 
so that $\t \in \det^{-1}(\z)$. More precisely, $\det^{-1}(\z)$ is the $\Sb\Lb_2$-orbit 
of $\t$, and the stabilizer of $\t$ in $\Sb\Lb_2$ is the diagonal torus 
$\D^\opp(\CM^\times)$. So it is sufficient to prove that $\D^\opp(\CM^\times)$ 
acts transitively on ${\rm pr}^{-1}(\t)$ and that the stabilizer of some (any) point in 
${\rm pr}^{-1}(\t)$ is equal to $\D^\opp(\mub_d)$. 

Using~(\ref{eq:psi-e}) and the equations for $\YC(d)$, it follows that
\begin{multline*}
\hphantom{AAAAA}{\rm pr}^{-1}(\t)=\{(q,Q,e,b_0,b_1,\dots,b_d) \in \CM^{d+4}~|~
q=Q=b_1=\dots=b_{d-1}=0, \\
e=\xi~\text{and}~ b_0b_d = -\xi^{d-2}\}.\hphantom{AAAAAAAA}
\end{multline*}
Now, the result follows from~\eqref{eq:action-xi}.
\end{proof}
\end{quotation}

\medskip

\noindent{$\bullet$ \it Third step: long exact sequence in homotopy.} 
Applying the long exact sequence in homotopy to Fact~1 yields a short exact sequence 
of groups
\equat\label{eq:exact}
1 \longto \pi_1(\Sb\Lb_2(\CM)/\D^\opp(\mub_d),1) \longto \pi_1(\UC,p) \longto \pi_1(\CM^\times,-1) 
\longto 1.
\endequat
But $\pi_1(\Sb\Lb_2(\CM)/\D^\opp(\mub_d),1) \simeq \mub_d$, generated by the loop 
$\a_0 : t \mapsto \D^\opp(e^{2\sqrt{-1}\pi t/d})\D^\opp(\mub_d)$ while $\pi_1(\CM^\times,-1) \simeq \ZM$, 
generated by the loop $\b_0 : t \mapsto e^{2\sqrt{-1}\pi t}$. Now, let 
$$\fonction{\a}{[0,1]}{\UC}{t}{(0,0,1,e^{-2\sqrt{-1}\pi t},0,\dots,0,-e^{2\sqrt{-1}\pi t})}$$
$$\fonction{\b}{[0,1]}{\UC}{t}{g_0^{-1} \ph(1,0,-e^{2\sqrt{-1}\pi t}/4) g_0.}\leqno{\text{and}}$$
Then $\a$ is the image of $\a_0$ in $\pi_1(\UC,p)$ while $\b$ is a lift 
of $\b_0$ in $\pi_1(\UC,p)$. The short exact sequence of groups~(\ref{eq:exact}) implies that 
\equat\label{eq:chemins}
\pi_1(\UC,p) = \langle \a,\b \rangle.
\endequat

\medskip

\noindent{$\bullet$ \it Last step: conclusion.} 
Recall the following classical fact:

\medskip

\begin{quotation}
\noindent{\bf Fact 2.} {\it The map $\pi_1(\UC,p) \longto \pi_1(\YC(d)\setminus\{0\},p)$ 
is surjective.} 

\begin{proof}
By Proposition~\ref{prop:blowup}, the variety $\YC(d) \setminus \{0\}$ is smooth, 
and $\UC$ is a Zariski open subset of $\YC(d) \setminus \{0\}$, so the 
result follows from~\cite[Theorem~2.3]{godbillon}.
\end{proof}
\end{quotation}

\medskip

\noindent Fact~2 shows that it suffices to 
check that $\a$ and $\b$ are homotopy equivalent, in $\YC(d) \setminus \{0\}$, 
to the trivial loop. But the loops $\a$ and $\b$ are both contained in 
$g_0^{-1} \ph(\SC) g_0$, 
which is simply-connected by Lemma~\ref{lem:pi1-x}. So the proof 
of the proposition is complete.
\end{proof}


Together with Theorem~\ref{theo:zd}, this concludes the proof of Theorem~\ref{theo:main 1}. 


\section{Quiver varieties and Slodowy slices}

\medskip

In this section we show that the isolated singularity $(\ZC(d),0)$ is locally analytically isomorphic 
to a certain singularity $(\XCt(d),\xti_d)$ constructed as a cover of a type $A$ Slodowy slice. The isomorphism passes through a Nakajima quiver variety. 

We begin with a quiver of type $\widetilde{A}$ with $d$ vertices, labelled $\rho_0, \dots, \rho_{d-1}$ and $d$ arrows $\rho_i \to \rho_{i+1}$, where indices are taken modulo $d$. We extend this by adding a framing $\rho_{\infty} \to \rho_0$ and denote by $\Qu$ the resulting quiver. Let $\overline{\Qu}$ be the doubled quiver, which is independent of the choice of orientation of $\Qu$. 

We let $\delta$ be the minimal imaginary root for the 
$\widetilde{A}$ root system, thought of as a dimension vector for $\overline{\Qu}$, and set $\bv = \rho_{\infty} + 2 \delta$. Let $Q = \bigoplus_{i = 1}^{d-1} \ZM \rho_i$ be the root lattice of the root system $\Phi$ of type $A_{d-1}$, with set of simple roots $\{ \rho_1, \dots, \rho_{d-1} \}$. Then $Q^+ := \bigoplus_{i = 1}^{d-1} \ZM_{\ge 0} \rho_i$ and the set of positive roots is $\Phi^+ = Q^+ \cap \Phi$. Let $\alpha_h = \rho_1 + \cdots + \rho_{d-1}$ be the highest positive root. For $1 \le i < j \le d-1$, we define $\alpha_{i,j} = \rho_i + \cdots + \rho_j$, an element of $\Phi^+$. The symbol $\prec$ will refer to the dominance ordering on $Q$. We have
\equat\label{eq:phinorm2}
\{ \alpha \in Q^+ \, | \, (\alpha,\alpha) = 2 \} = \Phi^+ .
\endequat

\begin{lem}
The following holds: 
\equat\label{eq:Qlength4A}
\{ \alpha \in Q^+ \, | \, \textrm{$(\alpha,\alpha) = 4$, and $\alpha \preceq 2 \alpha_h$} \} = \{ \alpha_{i,j} + \alpha_{k,l} \, | \, 1 \le i < k < l < j \le d-1 \} \\
\endequat
\equat\label{eq:Qlength4B}
\hphantom{\{ \alpha \in Q^+ \, | \, \textrm{$(\alpha,\alpha) = 4$, and $\alpha \preceq 2 \alpha_h$} \} }
\cup \{ \alpha_{i,j} + \alpha_{k,l} \, | \, 1 \le i < j < k < l \le d-1 \}. 
\endequat

\end{lem}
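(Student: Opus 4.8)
The plan is to translate the statement into an elementary count of integer sequences. I would write $\alpha=\sum_{m=1}^{d-1}c_m\rho_m\in Q^+$ (so $c_m\ge 0$); since $2\alpha_h=\sum_{m=1}^{d-1}2\rho_m$, the condition $\alpha\preceq 2\alpha_h$ is equivalent to $c_m\le 2$ for every $m$, hence $c_m\in\{0,1,2\}$. Setting $c_0=c_d=0$ and expanding the type-$A_{d-1}$ Cartan form, one obtains the identity $(\alpha,\alpha)=\sum_{m=0}^{d-1}(c_{m+1}-c_m)^2$. Thus the left-hand side of the displayed equality corresponds bijectively to the $\{0,1,2\}$-valued sequences $(0,c_1,\dots,c_{d-1},0)$ whose successive increments have squares summing to $4$, and it remains to match these sequences with the two shapes listed on the right.

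For the inclusion $\supseteq$ I would verify directly that the listed elements lie in the left-hand set: for $\alpha=\alpha_{i,j}+\alpha_{k,l}$ of either listed form the coefficient of each $\rho_m$ is at most $2$ (it is $1$ or $2$ according as $m$ lies in one or both of the intervals), so $\alpha\in Q^+$ and $\alpha\preceq 2\alpha_h$; and writing the roots in a standard orthonormal realisation $\alpha_{i,j}=\varepsilon_i-\varepsilon_{j+1}$, the pairing $(\alpha_{i,j},\alpha_{k,l})$ is a sum of four Kronecker terms that all vanish --- in the nested family because $[k,l]$ sits strictly inside $[i,j]$, in the disjoint family because $[i,j]$ and $[k,l]$ are separated by a gap. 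Hence $(\alpha,\alpha)=2+2\cdot 0+2=4$.

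The inclusion $\subseteq$ is the heart of the argument. Given $\alpha$ on the left, I would look at the increments $\delta_m=c_{m+1}-c_m$: they lie in $\{-2,\dots,2\}$, so each $\delta_m^2\in\{0,1,4\}$, and since they sum to $4$ either exactly one is $\pm 2$ or exactly four are $\pm 1$ (the rest zero). The first case is impossible, because $\sum_m\delta_m=c_d-c_0=0$ and a single nonzero increment cannot sum to $0$. In the second case the same telescoping forces two $+1$'s and two $-1$'s, and reading them left to right while requiring the partial sums to stay in $\{0,1,2\}$ leaves only the orders $(+1,+1,-1,-1)$ and $(+1,-1,+1,-1)$. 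The first makes $c$ climb through a nonempty block at level $1$, then a nonempty block at level $2$, then a nonempty block at level $1$, and back to $0$: taking $[i,j]$ to be the full support and $[k,l]$ the level-$2$ block exhibits $\alpha=\alpha_{i,j}+\alpha_{k,l}$ as a member of the first (nested) family. The second gives two nonempty level-$1$ blocks which, because the sequence returns to level $0$ between them, are disjoint and non-adjacent: taking these blocks to be $[i,j]$ and $[k,l]$ exhibits $\alpha$ as a member of the second family. This covers all possibilities, which together with the previous paragraph gives the asserted equality.

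Since this is a finite combinatorial statement, I do not expect a genuine obstacle; the only points that will need care, both in the last step, are allowing the blocks to have width one (so that $\alpha_{i,i}$ must be read as the simple root $\rho_i$) and insisting on the separation between the two level-$1$ blocks in the disjoint family --- without a gap they would fuse into a single positive root, of square-length $2$ rather than $4$. Once the lattice-path reformulation $(\alpha,\alpha)=\sum(c_{m+1}-c_m)^2$ is in place, everything else is routine bookkeeping.
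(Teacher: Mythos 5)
Your proof is correct, and it takes a genuinely different route from the paper's. The paper argues root-theoretically: it first splits according to whether the support of $\alpha$ is connected, handling the disconnected case by the additivity $4=(\alpha^{(1)},\alpha^{(1)})+(\alpha^{(2)},\alpha^{(2)})$ together with \eqref{eq:phinorm2}, and in the connected case it reduces to full support, writes $\alpha=2\alpha_h-(\alpha_{i_1,j_1}+\cdots+\alpha_{i_k,j_k})$ with pairwise non-adjacent intervals, and evaluates $(\alpha,\alpha)=8+2k-4(\delta_{i_1,1}+\delta_{j_k,d-1})$ to force $k=2$, $i_1=1$, $j_k=d-1$. You instead encode $\alpha$ by its coefficient sequence and use the identity $(\alpha,\alpha)=\sum_{m}(c_{m+1}-c_m)^2$, reducing everything to a short analysis of lattice paths with increments in $\{-1,0,+1,\pm 2\}$; the two admissible increment patterns $(+1,+1,-1,-1)$ and $(+1,-1,+1,-1)$ then correspond exactly to the nested and disjoint families. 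Your version is more elementary and self-contained (no case split on connectivity, no auxiliary decomposition relative to $2\alpha_h$), at the cost of being specific to type $A$, where the paper's computation generalises more readily. A further point in your favour: your closing remarks correctly identify the two degenerate situations that the strict inequalities in the displayed sets gloss over --- one must allow $\alpha_{i,i}=\rho_i$ (e.g.\ $\rho_1+\rho_3$ or $\alpha_{1,3}+\rho_2$ lie in the left-hand set), and in the disjoint family the two intervals must be separated by a genuine gap, since adjacent intervals fuse into a single root of squared length $2$; the paper's own verification that $(\alpha_{i,j},\alpha_{k,l})=0$ in the disjoint case implicitly uses this separation as well.
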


\begin{proof}
	
	When $1 \le i < k < l < j \le d-1$ (where the "shorter'' root $\alpha_{k,l}$ sits on top of the "higher'' root $\alpha_{i,j}$) or when $ 1 \le i < j < k < l \le d-1$ (where we have a disjoint union of two roots) we have $(\alpha_{i,j}, \alpha_{k,l}) = 0$ and hence 
	$$
	(\alpha_{i,j} + \alpha_{k,l}, \alpha_{i,j} + \alpha_{k,l}) = (\alpha_{i,j}, \alpha_{i,j}) + (\alpha_{k,l},\alpha_{k,l}) = 2 + 2 = 4. 
	$$  
	Therefore, the right hand sides of~\eqref{eq:Qlength4A} 
	and~\eqref{eq:Qlength4B} are contained in the left hand side. 
	
	Assume now we have $\alpha \in Q^+$ with $(\alpha,\alpha) = 4$. If the support of $\alpha$ is disconnected, we can write $\alpha = \alpha^{(1)} + \alpha^{(2)}$ with the 
	$\alpha^{(i)} \neq 0$ having orthogonal supports. Then $4 = (\alpha,\alpha) = (\alpha^{(1)},\alpha^{(1)}) + (\alpha^{(2)},\alpha^{(2)})$. Since $(\beta,\beta) \ge 2$ for all non-zero $\beta \in Q^+$, we deduce from \eqref{eq:phinorm2} that $\alpha^{(i)} \in \Phi^+$ and hence $\alpha$ belongs to the set in \eqref{eq:Qlength4B}. 
	
	Thus, we assume that the support of $\alpha$ is connected. As a (connected) subgraph of a type $A$ Dynkin diagram is again a type $A$ Dynkin diagram, we may assume without loss of generality that the support of $\alpha$ is the whole Dynkin diagram. Thus, $\alpha_h \preceq \alpha \preceq 2 \alpha_h$ and we can write 
	$$
	\alpha = 2 \alpha_h - (\alpha_{i_1,j_1} + \cdots + \alpha_{i_k,j_k})
	$$
	where $1 \le i_1 \le j_1 < i_2 \le \cdots < i_k \le j_k \le d-1$ and moreover $j_m+1 < i_{m+1}$. 
	We compute
	\begin{align*}
		(\alpha,\alpha) & = 4 (\alpha_h,\alpha_h) + \sum_{m=1}^k (\alpha_{i_m,j_m}, \alpha_{i_m,j_m}) - 4 \sum_{m=1}^k (\alpha_h,\alpha_{i_m,j_m}) \\
		& = 8 + 2k - 4(\delta_{i_1,1} + \delta_{j_k,d-1}). 
	\end{align*}
	If this equals $4$ then we must have $k = 2$ and $i_1 = 1, j_2 = d-1$. Hence $\alpha = \alpha_{1,d-1} + \alpha_{j_1 + 1,i_2-1}$ belongs to the right hand side of \eqref{eq:Qlength4A}. 
\end{proof}


Now we consider the parameter $\lambda$, where $\lambda_{\infty} = -2, \lambda_0 = 1$ and $\lambda_i = 0$ otherwise, for the quiver variety associated to $\Qu$. Notice that $\lambda \cdot \bv = \lambda \cdot ( \rho_{\infty} + 2 \delta) = 0$.  Therefore we can associate to it the affine quiver variety $\MG_{\lambda}(\bv)$ as defined in~\cite[Section~1.1]{BellSchedQuiver}. This space parameterises semi-simple representations of the deformed preprojective algebra $\Pi^{\lambda}(\Qu)$ of dimension $\bv$. We define the set $\Sigma_{\lambda}(\bv)$ to be all positive roots $\alpha$ for $\Qu$ such that 
\begin{itemize}
	\itemth{A} $\alpha \preceq \bv$ and $\lambda \cdot \alpha = 0$, 
	\itemth{B} $p(\alpha) > p(\beta^{(1)}) + \cdots + p(\beta^{(k)})$ for all proper decompositions $\alpha = \beta^{(1)} + \cdots + \beta^{(k)}$ with $\beta^{(i)}$ a positive root, $\lambda \cdot \beta^{(i)} = 0$.
\end{itemize}    
Here $p(\alpha) := 1 - (1/2)(\alpha,\alpha)$. A vector $\alpha$ belongs to $\Sigma_{\lambda}(\bv)$ if and only if there is a simple representation of $\Pi^{\lambda}(\Qu)$ of dimension $\alpha \le \bv$. 


\begin{lem}\label{lem:Sigmabvlambda}
$\Sigma_{\lambda}(\bv) = \{ \bv, \rho_{\infty} + 2 \rho_0 + \alpha_h, \rho_{\infty} + 2 \rho_0 + \rho_1 + \rho_{d-1}, \rho_1, \dots, \rho_{d-1} \}$.		
\end{lem}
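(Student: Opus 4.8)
The plan is to enumerate by hand the positive roots $\alpha$ of $\Qu$ satisfying conditions~(A) and~(B), exploiting the very rigid shape of the parameter $\lambda$. Since $\lambda_\infty=-2$, $\lambda_0=1$ and $\lambda_i=0$ for $1\le i\le d-1$, the conditions $\alpha\preceq\bv$ and $\lambda\cdot\alpha=0$ force $\alpha_\infty\in\{0,1\}$ and $\alpha_0=2\alpha_\infty$, so there are exactly two cases. If $\alpha_\infty=0$, then also $\alpha_0=0$, so $\alpha$ is a positive root supported on $\{\rho_1,\dots,\rho_{d-1}\}$, that is, $\alpha=\alpha_{i,j}\in\Phi^+$ (and $\alpha_{i,j}\preceq\bv$ automatically). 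Every positive root $\beta$ has $p(\beta)\ge0$, and $p(\alpha_{i,j})=0$ since $(\alpha_{i,j},\alpha_{i,j})=2$, so~(B) holds for $\alpha_{i,j}$ if and only if it admits no proper decomposition into positive roots (every such decomposition being automatically $\lambda$-admissible). Writing $\alpha_{i,j}=\rho_i+\alpha_{i+1,j}$ for $i<j$ shows that the survivors in this case are exactly the simple roots $\rho_1,\dots,\rho_{d-1}$, each of which manifestly lies in $\Sigma_\lambda(\bv)$.

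The case $\alpha_\infty=1$ is the heart of the matter. Here $\alpha_0=2$, so $\alpha=\rho_\infty+2\rho_0+\gamma$ with $\gamma\in Q^+$ and $\gamma\preceq2\alpha_h$; put $\gamma':=\bv-\alpha=2\alpha_h-\gamma\in Q^+$. Since $\rho_\infty$ is joined only to $\rho_0$, and $\rho_0$ meets $\{\rho_1,\dots,\rho_{d-1}\}$ only in $\rho_1$ and $\rho_{d-1}$, a short computation yields $(\bv,\gamma')=0$, whence $(\alpha,\alpha)=(\bv,\bv)+(\gamma',\gamma')=(\gamma',\gamma')-2$ and $p(\alpha)=2-\tfrac{1}{2}(\gamma',\gamma')$. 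As $\alpha$ is a root, $(\alpha,\alpha)\le2$, and $(\gamma',\gamma')$ is even, so $(\gamma',\gamma')\in\{0,2,4\}$; by~\eqref{eq:phinorm2} and~\eqref{eq:Qlength4A}--\eqref{eq:Qlength4B} this forces $\gamma'\in\{0\}\cup\Phi^+\cup\{\alpha_{i,j}+\alpha_{k,l}\}$, a finite explicit list of candidate values of $\alpha$, to be pruned using~(B) and genuine rootness.

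To apply~(B) I would first note that the same $\lambda$-balance forces every proper decomposition of $\alpha$ into positive roots to have the shape $\alpha=(\rho_\infty+2\rho_0+\gamma^{(1)})+\beta^{(2)}+\cdots+\beta^{(m)}$ with $\gamma^{(1)}\preceq\gamma$, $\gamma^{(1)}\ne\gamma$ and $\beta^{(i)}\in\Phi^+$, and conversely any positive root $\rho_\infty+2\rho_0+\gamma^{(1)}$ with $\gamma^{(1)}\preceq\gamma$, $\gamma^{(1)}\ne\gamma$, yields such a decomposition (split $\gamma-\gamma^{(1)}$ into simple roots). Since $p(\beta^{(i)})=0$ for $i\ge2$, condition~(B) then says precisely that no positive root $\rho_\infty+2\rho_0+\gamma^{(1)}$ with $\gamma^{(1)}\preceq\gamma$, $\gamma^{(1)}\ne\gamma$, has $p$-value $\ge p(\alpha)$. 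Now $\alpha_h\preceq\gamma$ holds exactly when all coordinates of $\gamma'$ are $\le1$, i.e.\ for $\gamma'\in\{0\}\cup\Phi^+$ and for the ``disjoint'' norm-$4$ elements; in those cases, if in addition $\gamma\ne\alpha_h$ and $\gamma'\ne0$, the decomposition $\alpha=(\rho_\infty+2\rho_0+\alpha_h)+(\gamma-\alpha_h)$ has $\sum_i p(\beta^{(i)})=1\ge p(\alpha)$ and so violates~(B) — here one uses that $\rho_\infty+2\rho_0+\alpha_h$ is a root with $p$-value $1$. In the remaining ``nested'' norm-$4$ cases one has $\rho_1+\rho_{d-1}\preceq\gamma$, and if $\gamma\ne\rho_1+\rho_{d-1}$ the analogous decomposition through $\rho_\infty+2\rho_0+\rho_1+\rho_{d-1}$ (also a root, with $p$-value $0=p(\alpha)$) again violates~(B). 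This leaves only $\alpha=\bv$, $\alpha=\rho_\infty+2\rho_0+\alpha_h$ and $\alpha=\rho_\infty+2\rho_0+\rho_1+\rho_{d-1}$; for these, (B) is confirmed directly by writing $\bv-(\rho_\infty+2\rho_0+\gamma^{(1)})=\gamma'+\mu$ with $\mu=\gamma-\gamma^{(1)}\in Q^+$ and checking $2(\gamma',\mu)+(\mu,\mu)>0$ from positive-definiteness of the $A_{d-1}$ form and $(\alpha_h,\mu)=\mu_1+\mu_{d-1}\ge0$ (for the last candidate one also uses that $\rho_\infty+2\rho_0+\rho_1$ and $\rho_\infty+2\rho_0+\rho_{d-1}$ have norm $4$, hence are not roots, so no proper decomposition exists at all). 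It then remains to check that the three survivors really are positive roots of $\Qu$: $\bv$ lies in the fundamental region because $(\bv,\rho_v)\le0$ for every vertex $v$ and its support is connected, while applying the simple reflections at $\rho_0$ and then at $\rho_\infty$ carries $\rho_\infty+2\rho_0+\alpha_h$ to the minimal imaginary root $\delta$ and $\rho_\infty+2\rho_0+\rho_1+\rho_{d-1}$ to the highest root $\rho_1+\rho_0+\rho_{d-1}$ of the type-$A_3$ subsystem on $\{\rho_1,\rho_0,\rho_{d-1}\}$.

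I expect the main obstacle to be the bookkeeping in the case $\alpha_\infty=1$: identifying exactly which vectors $\rho_\infty+2\rho_0+\gamma^{(1)}$ are roots of $\Qu$ — this is what both supplies the decompositions that eliminate the non-survivors and certifies that the three survivors admit no proper decomposition — together with handling the small-$d$ degeneracies, most notably $d=4$, where $\rho_1+\rho_{d-1}$ begins to interact with $\alpha_h$.
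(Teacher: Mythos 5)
Your argument is correct and follows essentially the same route as the paper's proof: split according to $\alpha_\infty\in\{0,1\}$, observe that in the second case condition~(B) reduces to a maximality statement for $\bv-\alpha$ among vectors of $Q^+$ of squared length at most $4$, and conclude from the classification of the length-$2$ and length-$4$ elements in the preceding lemma. The only substantive difference is that where the paper invokes \cite[Lemma~4.3]{BellamyCrawQuotient} to parametrize the positive roots with $\infty$-component equal to $1$ (which both supplies the competing summands in the check of~(B) and certifies that the three surviving candidates are indeed roots), you verify the relevant rootness statements by hand via simple reflections and the fundamental domain --- a self-contained but equivalent step.
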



\begin{proof}
	If $\alpha \in \Sigma_{\lambda}(\bv)$ then either $\alpha_{\infty} = 1$ or $\alpha_{\infty} = 0$. Consider first the latter. Since $\lambda \cdot \alpha = 0$, this implies that $\alpha_0 = 0$ too. That is, $\alpha \in \Phi^+$. But $\Sigma_{\lambda}(\bv) \cap \Phi^+ = \{ \rho_1, \dots, \rho_{d-1} \}$. 
	
	Therefore, we assume that $\alpha_{\infty} = 1$. As noted in~\cite[Lemma~4.3]{BellamyCrawQuotient}, this implies that $\alpha = \rho_{\infty} + m \delta + (1/2)(v,v) \delta - v$, for some $v \in Q$ and $m \in \ZM_{\ge 0}$. Since $\lambda \cdot \alpha = 0$, we must have $m + (1/2) (v,v) = 2$. Moreover, for $i = 1, \dots, d-1$, $m + (1/2)(v,v) - v_i \le 2$ because $\delta_i = 1$. We deduce that $v \in Q^+$. 
	
	Next, for all $\alpha' = \rho_{\infty} + m' \delta + (1/2)(v',v') \delta - v'$ with $\alpha' \prec \alpha$ and $m' + (1/2) (v',v') = 2$, we require $p(\alpha) > p(\alpha')$ because $\alpha - \alpha'$ is a sum of real roots. If $\alpha > \alpha'$ then $v' \succ v$. Moreover, $2 - (1/2) (v, v) = p(\alpha) > p(\alpha') = 2 - (1/2) (v', v')$ if and only if $(v', v') > (v, v)$. Thus, we require: 
	\begin{center}
		$(v', v') > (v, v)$ for all $v' \in Q^+, v' \succ v$ and $(v', v') \le 4$. 
	\end{center}
	First, if $v = 0$ then $(v,v) = 0$ so the condition is vacuous. Next, we assume that $(v,v) = 2$. Then $v \in \Phi^+$ by~\eqref{eq:phinorm2} and the condition says $v$ is maximal with respect to $\preceq$ on $\Phi^+$. In other words, $v = \alpha_h$ is the highest root. This corresponds to $\alpha = \rho_{\infty} + 2 \rho_0 + \alpha_h$. Finally, if $(v,v) = 4$ then the condition says that $v$ should be a maximal vector in the union of \eqref{eq:Qlength4A} and \eqref{eq:Qlength4B}. There is only one maximal vector, which is $\alpha_{1,d-1} + \alpha_{2,d-2}$. This corresponds to $\alpha= \rho_{\infty} + 2 \rho_0 + \rho_1 + \rho_{d-1}$. 
\end{proof}


\begin{lem}
	The quiver variety $\MG_{\lambda}(\bv)$ has three symplectic leaves, $\MG_{\lambda}(\bv)_{\tau_0},\MG_{\lambda}(\bv)_{\tau_2}$ and $\MG_{\lambda}(\bv)_{\tau_4}$, of dimension $0$, $2$ and $4$ respectively. 
\end{lem}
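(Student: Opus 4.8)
The plan is to invoke the standard stratification of the Nakajima quiver variety $\MG_{\lambda}(\bv)$ by \emph{representation type} (see~\cite{BellSchedQuiver} and the references therein): a point of $\MG_{\lambda}(\bv)$ is a semisimple representation $M = T_1^{\oplus n_1}\oplus\cdots\oplus T_r^{\oplus n_r}$ of $\Pi^{\lambda}(\Qu)$ with the $T_i$ pairwise non-isomorphic simple, so that each $\dim T_i$ lies in $\Sigma_{\lambda}(\bv)$; two points lie in the same stratum if and only if they have the same type; and these strata are exactly the symplectic leaves, the leaf through $M$ being smooth of dimension $\sum 2\, p(\dim T_i)$, the sum ranging over the (pairwise non-isomorphic) simple constituents $T_i$, each counted once regardless of multiplicity. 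So the argument reduces to two bookkeeping tasks: (i) list all the ways to write $\bv = \sum_i n_i \beta^{(i)}$ with the $\beta^{(i)}\in\Sigma_{\lambda}(\bv)$, and (ii) read off the associated dimensions. One first records that $\lambda\cdot\bv = 0$, already noted, so that $\MG_{\lambda}(\bv)$ is defined.

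For (i) I would feed in Lemma~\ref{lem:Sigmabvlambda}. Writing $\sigma_1 = \rho_{\infty}+2\rho_0+\alpha_h$ and $\sigma_2 = \rho_{\infty}+2\rho_0+\rho_1+\rho_{d-1}$, the set $\Sigma_{\lambda}(\bv)$ consists of $\bv$, $\sigma_1$, $\sigma_2$ together with the simple roots $\rho_1,\dots,\rho_{d-1}$. The key observation is that the $\rho_{\infty}$-coefficient of $\bv = \rho_{\infty}+2\delta$ equals $1$, and that among the elements of $\Sigma_{\lambda}(\bv)$ only $\bv$, $\sigma_1$, $\sigma_2$ involve $\rho_{\infty}$ (with coefficient $1$ each); hence any decomposition of $\bv$ uses exactly one of $\bv,\sigma_1,\sigma_2$, with multiplicity one. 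Subtracting it leaves a vector supported on the $A_{d-1}$ sub-diagram, which must then be expanded uniquely over $\rho_1,\dots,\rho_{d-1}$: for $\bv$ the residue is $0$; for $\sigma_1$ it is $\alpha_h = \rho_1+\cdots+\rho_{d-1}$ (using $\delta = \rho_0+\rho_1+\cdots+\rho_{d-1}$); for $\sigma_2$ it is $\rho_1+2\rho_2+\cdots+2\rho_{d-2}+\rho_{d-1}$. This yields exactly three representation types $\tau_4,\tau_2,\tau_0$ (ordered to anticipate their dimensions). Note there is no ambiguity coming from several non-isomorphic simples sharing a dimension vector, because the only $\beta\in\Sigma_{\lambda}(\bv)$ with $p(\beta)\ge 1$ are $\bv$ and $\sigma_1$, which occur with multiplicity $1$, while every other constituent has $p=0$ and hence a unique simple of that dimension.

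For (ii) I would record the $p$-values. Immediately $p(\rho_i) = 0$; and from the parametrisation used in the proof of Lemma~\ref{lem:Sigmabvlambda}, where $\bv$, $\sigma_1$, $\sigma_2$ correspond to $v = 0$, $v = \alpha_h$, $v = \alpha_{1,d-1}+\alpha_{2,d-2}$ with $p = 2 - (1/2)(v,v)$, one gets $p(\bv) = 2$, $p(\sigma_1) = 1$, $p(\sigma_2) = 0$ (equivalently $(\bv,\bv) = (\rho_{\infty}+2\delta,\rho_{\infty}+2\delta) = 2 - 4 + 0 = -2$). Plugging into the dimension formula: the type $\tau_4$ ($\bv = \bv$) gives the open dense leaf, of dimension $2p(\bv) = 4$; the type $\tau_2$ ($\bv = \sigma_1 + \rho_1 + \cdots + \rho_{d-1}$) gives $2p(\sigma_1) + \sum_{i=1}^{d-1} 2p(\rho_i) = 2$; the type $\tau_0$ ($\bv = \sigma_2 + \rho_1 + \rho_{d-1} + 2\rho_2 + \cdots + 2\rho_{d-2}$) gives $2p(\sigma_2) + \sum_{i=1}^{d-1} 2p(\rho_i) = 0$. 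Hence $\MG_{\lambda}(\bv)$ has exactly the three symplectic leaves $\MG_{\lambda}(\bv)_{\tau_0}$, $\MG_{\lambda}(\bv)_{\tau_2}$, $\MG_{\lambda}(\bv)_{\tau_4}$, of dimensions $0$, $2$, $4$.

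The only genuinely nontrivial input is Lemma~\ref{lem:Sigmabvlambda}, which is already established; in the remaining argument the main things to be careful about are the dimension formula — in particular that a simple constituent with $p=0$ (here the $\rho_i$ and $\sigma_2$) contributes $0$ irrespective of its multiplicity, so that the multiplicities $2$ appearing in $\tau_0$ are harmless — and the assertion that the strata of the representation-type stratification are exactly the symplectic leaves. Both are standard facts about quiver varieties that I would cite from~\cite{BellSchedQuiver}; I expect no further obstacle.
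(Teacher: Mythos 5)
Your proposal is correct and follows essentially the same route as the paper: both invoke the representation-type stratification and dimension formula from \cite{BellSchedQuiver}, feed in Lemma~\ref{lem:Sigmabvlambda} to enumerate the three possible types, and read off the dimensions from the $p$-values. Your write-up merely makes explicit two points the paper leaves implicit (that the $\rho_\infty$-coefficient forces exactly one of $\bv,\sigma_1,\sigma_2$ per decomposition, and the actual computation of $p(\bv)=2$, $p(\sigma_1)=1$, $p(\sigma_2)=p(\rho_i)=0$), and both checks are accurate.
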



\begin{proof}
	As shown in \cite{BellSchedQuiver}, the sympectic leaves of $\MG_{\lambda}(\bv)$ are labelled by the representation types $\tau = (\beta^{(1)},n_1;\dots ; \beta^{(k)},n_k)$ of $(\bv,\lambda)$. Here $\beta^{(i)} \in \Sigma_{\lambda}(\bv)$, $\bv = n_1 \beta^{(1)} + \cdots + n_k \beta^{(k)}$ and the real roots in $\Sigma_{\lambda}(\bv)$ occur at most once amongst the $\beta^{(i)}$. The leaf labelled by $\tau$ has dimension $\sum_i 2 p(\beta^{(i)})$. In our case, Lemma~\ref{lem:Sigmabvlambda} implies that  the possible representation types are
	\begin{align*}
		\tau_0 & = (\rho_{\infty} + 2 \rho_0 + \rho_1 + \rho_{d-1},1;\rho_1,1;\rho_2,2; \dots ; \rho_{d-1},1) \\
		\tau_2 & = (\rho_{\infty} + 2 \rho_0 + \alpha_h,1;\rho_1,1;\rho_2,1; \dots ; \rho_{d-1},1) \\
		\tau_4 & = (\bv,1).
	\end{align*}
\end{proof}


Since the symplectic leaf $\MG_{\lambda}(\bv)_{\tau_0}$ is zero-dimensional and connected, it equals $\{ x_0 \}$ for some point $x_0 \in \MG_{\lambda}(\bv)$. 

Recall from the introduction that $\mathcal{S}_{d-2,2}$ is a Slodowy slice associated with the nilpotent orbit $\OC_{d-2,2}$, that $\XC(d) = \mathcal{S}_{d-2,2} \cap \NC_d$, and that $x_d$ is the unique element of 
$\OC_{d-2,2} \cap \SC_{d-2,2}$. 


\begin{theo}\label{thm:quiverslodlwyiso}
	The singularities $(\MG_{\lambda}(\bv),x_0)$ and $(\XC(d),x_d)$ are locally analytically isomorphic. 
\end{theo}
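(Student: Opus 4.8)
The plan is to realise $\MG_{\lambda}(\bv)$ as a Nakajima quiver variety of type $A$, apply the Nakajima--Maffei identification of such varieties with nilpotent Slodowy slices to obtain a global Poisson isomorphism $\MG_{\lambda}(\bv)\cong\XC(d)$, and then match the distinguished points $x_0$ and $x_d$.

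\emph{Step 1 (the two geometries).} From the preceding lemmas, $\MG_{\lambda}(\bv)$ is a normal affine Poisson variety of dimension $4$ with exactly three symplectic leaves, of dimensions $0$, $2$ and $4$, the $0$-dimensional one being $\{x_0\}$. I would record the parallel facts for $\XC(d)=\SC_{d-2,2}\cap\NC_d$: it is $4$-dimensional (the codimension of $\OC_{d-2,2}$ in $\NC_d$ is $4$) and, since a Slodowy slice is transverse to every nilpotent orbit, its symplectic leaves are the non-empty intersections $\OC_{\mu}\cap\SC_{d-2,2}$, which occur exactly for $\mu$ dominating $(d-2,2)$, i.e.\ for $\mu\in\{(d),(d-1,1),(d-2,2)\}$; these have dimensions $4$, $2$, $0$, so again there is a unique $0$-dimensional leaf $\{x_d\}$, and the transverse slice along the $2$-dimensional leaf is the slice to the subregular orbit in $\NC_d$, i.e.\ the Kleinian $A_{d-1}$ singularity. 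This comparison will serve as a check against Step 2 and, together with the fact that any Poisson isomorphism preserves the stratification by symplectic leaves, will force $x_0\mapsto x_d$.

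\emph{Step 2 (the isomorphism).} By Crawley--Boevey's reduction of framed to unframed quiver varieties, $\MG_{\lambda}(\bv)$ with $\bv=\rho_{\infty}+2\delta$ is the moduli space of semisimple $\Pi^{\lambda}(\Qu)$-modules of dimension $\bv$. Since $\lambda_i=0$ for $1\le i\le d-1$ while $\lambda_0\neq 0$, Lemma~\ref{lem:Sigmabvlambda} shows that the simple constituents occurring have dimension vectors among the simple roots $\rho_1,\dots,\rho_{d-1}$ of the finite $A_{d-1}$ subsystem together with the three vectors having $\alpha_{\infty}=1$; using reflection functors (Crawley--Boevey--Holland, equivalently Nakajima's hyperk\"ahler symmetries) to move $\lambda$ into a convenient chamber, one identifies $\MG_{\lambda}(\bv)$ with a Nakajima quiver variety for the \emph{finite} type $A_{d-1}$ quiver, its dimension and framing vectors being read off from $(\bv,\lambda)$. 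The numerical translation is routine: the framing data sums to $d$ and is concentrated so that the large stratum corresponds to the regular orbit, $2\delta$ becomes the slice partition $(d-2,2)$, and the vanishing of $\lambda$ on $\rho_1,\dots,\rho_{d-1}$ ensures that no deformation of the nilpotent orbit is switched on, so the slice is taken in $\NC_d$ itself. Feeding this into the isomorphism of Nakajima~\cite{Nak1994}, in the explicit partition form of Maffei~\cite{Ma}, gives a Poisson isomorphism $\MG_{\lambda}(\bv)\cong\overline{\OC}_{d}\cap\SC_{d-2,2}=\NC_d\cap\SC_{d-2,2}=\XC(d)$. The three-leaf structure and the $A_{d-1}$ transverse singularity computed in Step 1 agree on the two sides, a useful consistency check that the partitions have been identified correctly.

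\emph{Step 3 (conclusion and main obstacle).} The isomorphism of Step 2 is Poisson, hence preserves symplectic leaves, hence sends the unique $0$-dimensional leaf $\{x_0\}$ to the unique $0$-dimensional leaf $\{x_d\}$; an isomorphism of complex varieties carrying $x_0$ to $x_d$ is in particular a local analytic isomorphism of $(\MG_{\lambda}(\bv),x_0)$ with $(\XC(d),x_d)$, which is the assertion. The main obstacle, in my view, is Step 2: the passage from the affine-type-$A$ quiver variety at the non-generic parameter $\lambda$ to a finite-type-$A$ one, and the precise matching $(\bv,\lambda)\leftrightarrow\bigl((d),(d-2,2)\bigr)$ of partitions — the various conventions for framings, levels and reflection functors must be aligned so that the regular orbit and the partition $(d-2,2)$ emerge exactly, and not merely some other pair with the same leaf combinatorics.
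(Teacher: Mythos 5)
Your Steps 1 and 3 are sound, and you have correctly identified Step 2 as the crux; but the route you propose through Step 2 has a genuine gap. You aim for a \emph{global} Poisson isomorphism $\MG_{\lambda}(\bv)\cong\XC(d)$ by using reflection functors to move $\lambda$ into a chamber where ``no deformation is switched on''. Reflection functors give isomorphisms $\MG_{\lambda}(\bv)\cong\MG_{w\lambda}(w\bv)$ with $w$ acting \emph{linearly} on the parameter, so a nonzero $\lambda$ (here $\lambda_{\infty}=-2$, $\lambda_0=1$) can never be moved to $0$; yet Nakajima's identification of framed finite type $A$ quiver varieties with \emph{nilpotent} Slodowy slices is a statement at parameter $0$ (a nonzero parameter yields a slice of a non-nilpotent fibre of the adjoint quotient). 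The vanishing of $\lambda$ on $\rho_1,\dots,\rho_{d-1}$ does not help, since $\lambda_0$ and $\lambda_\infty$ cannot be reflected away. Worse, the global isomorphism you are after appears to be false, not just unproved: $\XC(d)$ is conical for the Kazhdan $\CM^\times$-action, whereas a contracting $\CM^\times$-action on $\MG_{\lambda}(\bv)\cong\ZC(d,1,2)=\ZC(d)/\mub_d$ would lift along the $\mub_d$-cover (after replacing $\CM^\times$ by a finite cover of itself) to a contracting action on $\ZC(d)$, contradicting the observation in the paper that $(\ZC(d),0)$ admits no such action. So matching leaf combinatorics and transverse singularities, as in your Step 1, cannot be upgraded to a global isomorphism.

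The missing idea is Crawley-Boevey's \'etale-local normal form for quiver varieties. The point $x_0$ corresponds to the semisimple representation $M=M_{\infty}\oplus M_1\oplus M_2^{\oplus 2}\oplus\cdots\oplus M_{d-2}^{\oplus 2}\oplus M_{d-1}$, and the germ $(\MG_{\lambda}(\bv),x_0)$ is \'etale-locally isomorphic to the germ at $0$ of the quiver variety \emph{at parameter zero} for the Ext-quiver of $M$, with dimension vector the multiplicities: it is this local structure theorem, not a reflection functor, that resets the parameter to $0$. Computing the Ext-quiver gives the framed finite type $A_{d-1}$ quiver with $v=(1,2,\dots,2,1)$ and $w_2=w_{d-2}=1$, and it is to \emph{this} parameter-zero variety that Nakajima's theorem (with $\boldsymbol{\mu}=(d)$ and $\boldsymbol{\lambda}=(d-2,2)$, in the form made explicit by Maffei) applies, yielding $\XC(d)$ with $0\mapsto x_d$. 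One must also check that both isomorphisms are Poisson (Bellamy--Schedler for the local normal form, Losev for Nakajima's map), after which your Step 3 goes through verbatim; no global statement about $\MG_{\lambda}(\bv)$ is needed or available.
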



\begin{proof}
	This is an application of Crawley-Boevey's \'etale local picture \cite{CBnormal}, together with the isomorphism of Nakajima \cite{Nak1994} (see also~\cite{Ma}). The point $x_0 \in \MG_{\lambda}(\bv)_{\tau_0}$ corresponds to a semi-simple representation of the deformed preprojective algebra $\Pi^{\lambda}(\Qu)$ of the form
	$$
	M = M_{\infty} \oplus M_1 \oplus M_2^{\oplus 2} \oplus \cdots \oplus M_{d-2}^{\oplus 2} \oplus M_{d-1},
	$$
	where all summands are simple, $\dim M_{\infty} = \rho_{\infty} + 2 \rho_0 + \rho_1 + \rho_{d-1}$ and $\dim M_i = \rho_i$. Since the dimension vector of each simple summand is real, there is a unique (up to isomorphism) simple representation of that dimension. 
	
	Corollary 4.10 of \cite{CBnormal} says that $(\MG_{\lambda}(\bv),x_0)$ is (\'etale locally) isomorphic to $0$ in another (framed) quiver variety; as shown in~\cite[Theorem~3.3]{BellSchedQuiver} this isomorphism is Poisson. The vertices $e_i$ of the new quiver are in bijection with the $M_i$, and $M_{\infty}$ corresponds to the framing data. There are $-(\dim M_i,\dim M_j) = \delta_{i,j-1} + \delta_{i,j+1}$ arrows from $e_i$ to $e_j$ in the doubled quiver. That is, the new (undoubled) quiver has underlying graph Dynkin of type $A_{d-1}$. Since all vectors are real there are no loops at any vertices. Finally there are 
	$$
	-(\dim M_{\infty},\dim M_i) = \left\{ \begin{array}{ll}
		1 & i = 2, d-2, \\
		0 & \textrm{otherwise,}
	\end{array} \right.
	$$   
	arrows from the framing vertex $e_{\infty}$ to $e_i$. As a (framed) quiver variety,
	$$
	\begin{tikzcd}
	w : & 	 & 1 \ar[d,dash] & & & & 1 \ar[d,dash] & \\
	v : & 1 \ar[r,dash] & 2 \ar[r,dash] & 2 \ar[rr,dash,dotted] & & 2 \ar[r,dash] & 2 \ar[r,dash] & 1  	 
	\end{tikzcd}
	$$
	Here $w$ is given by $w_2 = w_{d-2} = 1$ and $w_i = 0$ otherwise and $v_1 = v_{d-1} = 1$ and $v_2 = 2$ otherwise. Following the construction given in section 8 of \cite{Nak1994}, we see that $\boldsymbol{\mu} = (d)$ and $\boldsymbol{\lambda} = (d-2,2)$. 
	Therefore, the isomorphism of~\cite[Remark~8.5(2)]{Nak1994} says that the above framed quiver variety is isomorphic to $\XC(d)$. It follows from~\cite[Lemma~4.6.4]{Losev} that this isomorphism is as Poisson varieties. 
\end{proof}


\begin{rema}
	If $M$ is a representation of $\Pi^{\lambda}$ lying on the $2$-dimensional symplectic leaf $\MG_{\lambda}(\bv)_{\tau_2}$ then 
	$$
	M = M_{\infty} \oplus M_1 \oplus M_2 \oplus \cdots \oplus M_{d-1} \oplus M_{d-1},
	$$
	with $\dim M_i = \rho_i$. The simple summands correspond to vertices $e_{\infty}, e_1,\dots, e_{d-1}$. The associated doubled quiver is the affine type $A$ quiver, except now $p(\rho_{\infty} + 2 \rho_0 + \alpha_h) = 1$, which implies that there are two loops at $e_{\infty}$. The dimension vector is the minimal imaginary root $\delta = (1,\dots, 1)$. This implies that $(\MG_{\lambda}(\bv), [M])$ is isomorphic to $(\CM^2 \times (\CM^2 / \mub_d), 0)$. 
\end{rema}


In the notation of~\cite[Section~7.3]{MarsdenWeinsteinStratification}, we choose $c_1$ generic, $\underline{c} = 0$, and let $\ZC(d,1,2)$ denote the spectrum of the centre of the rational Cherednik algebra associated to the wreath product group $G(d,1,2)=\mub_d\wr\mathfrak{S}_2 $ at $t = 0$ and $(c_1,\underline{c})$. By~\cite[Theorem~1.4]{MarsdenWeinsteinStratification}, we have an isomorphism $\ZC(d,1,2) \cong \MG_{\lambda}(\bv)$, which is Poisson up to a scalar factor. Therefore, Theorem~\ref{thm:quiverslodlwyiso} can be reinterpreted as saying that there is an isomorphism of symplectic singularities $(\ZC(d,1,2),x_0) \cong (\XC(d),x_d)$. Recall that the nilpotent cone $\NC_d$ of 
$\sG\lG_d(\CM)$ admits a $\mub_d$-covering $\pi_d : \NCt_d \longto \NC_d$
which is unramified above the regular nilpotent orbit $\OC_d$ and bijective above the branch locus. 
We have set $\XCt(d)=\pi_d^{-1}(\XC(d))$ and we denote by $\xti_d$ the unique element 
in $\pi_d^{-1}(x_d)$. We are now ready to prove our second main result:

\def\smooth{{\mathrm{sm}}}


\begin{theo}
If $d \ge 4$, the symplectic singularities $(\ZC(d),0)$ and $(\XCt(d),x_d)$ are locally analytically isomorphic.
\end{theo}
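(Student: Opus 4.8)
The plan is to present both $(\ZC(d),0)$ and $(\XCt(d),\xti_d)$ as $\mub_d$-Galois covers of the same germ $(\XC(d),x_d)$ and then to prove that such a cover is unique, the essential input being the triviality of the local fundamental group of $\ZC(d)$ obtained in Proposition~\ref{prop:pi1}. (Here $\xti_d$ denotes the unique point of $\XCt(d)$ lying over $x_d$.)

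First I would produce the two covers and record the local geometry of $\XC(d)$. Because $W_d=G(d,d,2)$ is normal in $G(d,1,2)$ with quotient $\mub_d$, the group $\mub_d$ acts on $\ZC(d)$; chaining the identification $\ZC(d)/\mub_d\cong\ZC(d,1,2)$, the isomorphism $\ZC(d,1,2)\cong\MG_{\lambda}(\bv)$ (Poisson up to a scalar), and Theorem~\ref{thm:quiverslodlwyiso}, one gets a local analytic Poisson isomorphism $(\ZC(d)/\mub_d,\bar 0)\cong(\XC(d),x_d)$; thus, near $0$, the morphism $\ZC(d)\to\XC(d)$ is the quotient by a $\mub_d$-action, hence $\mub_d$-Galois. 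Likewise $\pi_d\colon\NCt_d\to\NC_d$ is $\mub_d$-Galois by construction, so $\XCt(d)=\pi_d^{-1}(\XC(d))\to\XC(d)$ is $\mub_d$-Galois with $\xti_d\mapsto x_d$. The orbits of $\sG\lG_d$ meeting $\SC_{d-2,2}$ in $\NC_d$ are $\OC_{d-2,2}$, $\OC_{d-1,1}$ and $\OC_d$, so the smooth locus of $\XC(d)$ is $\XC(d)^\circ:=\XC(d)\cap\OC_d$, its complement $\Sigma$ has codimension $2$, and the transverse slice to $\Sigma$ is the Kleinian singularity $\CM^2/\mub_d$ of type $A_{d-1}$ --- as is visible from the Remark describing $\MG_{\lambda}(\bv)$ near its two-dimensional leaf. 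Both covers are étale over $\XC(d)^\circ$: for $\XCt(d)$ because $\pi_d$ is étale over $\OC_d\supseteq\XC(d)^\circ$, and for $\ZC(d)$ because a non-free point of the symplectic $\mub_d$-action over a smooth point of $\XC(d)$ would force a genuine cyclic quotient singularity there. Moreover, transverse to $\Sigma$, each cover restricts to the standard cover $\CM^2\to\CM^2/\mub_d$.

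The heart of the matter is a fundamental group computation. Write $\ZC(d)^\circ$ and $\XCt(d)^\circ$ for the preimages of $\XC(d)^\circ$; these are smooth and $\mub_d$-Galois over $\XC(d)^\circ$. By Proposition~\ref{prop:pi1}, combined with $(\ZC(d),0)\cong(\YC(d),0)$ from Proposition~\ref{prop:blowup} and the contracting $\CM^\times$-action on $\YC(d)$, a punctured analytic neighbourhood of $0$ in $\ZC(d)$ is simply connected; removing from it the further, codimension-two, preimage of $\Sigma$ does not change $\pi_1$ (complement of a closed analytic subset of codimension $\ge2$ in a complex manifold), so $\ZC(d)^\circ$ is simply connected near $0$. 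Since $\ZC(d)$ is irreducible, this cover is connected, hence it is the universal cover of $\XC(d)^\circ$ and $\pi_1(\XC(d)^\circ)\cong\mub_d$. For the other cover, including a transverse slice to $\Sigma$ shows that $\pi_1(\XC(d)^\circ)\to\pi_1(\OC_d)$ is surjective --- both groups have order $d$ and are generated by the boundary loop of the $A_{d-1}$-slice transverse to the subregular orbit --- and $\XCt(d)^\circ$ is exactly the pullback along $\XC(d)^\circ\hookrightarrow\OC_d$ of the universal cover $\widetilde{\OC}_d\to\OC_d$; therefore $\XCt(d)^\circ$ is connected as well, so it too is the universal cover of $\XC(d)^\circ$.

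Two universal covers being isomorphic, we obtain $\ZC(d)^\circ\cong\XCt(d)^\circ$ over $\XC(d)^\circ$, compatibly with the $\mub_d$-actions. Since $\ZC(d)\setminus\{0\}$ and $\XCt(d)\setminus\{\xti_d\}$ are smooth and contain $\ZC(d)^\circ$, respectively $\XCt(d)^\circ$, as dense open subsets whose complements have codimension $2$, each is the normalisation of $\XC(d)\setminus\{x_d\}$ in the common function field, so $\ZC(d)\setminus\{0\}\cong\XCt(d)\setminus\{\xti_d\}$ near the marked points; normality of $\ZC(d)$ at $0$ and of $\XCt(d)$ at $\xti_d$ (for the latter because $\XCt(d)$ is a slice in the normal variety $\NCt_d$) then extends this across $0\leftrightarrow\xti_d$ to the asserted local analytic isomorphism, which moreover intertwines the Poisson brackets. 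The one genuinely non-formal step is the simple-connectedness of $\ZC(d)^\circ$: it is the only place where the computation of Section~4 enters, and it is precisely what makes the two a priori unrelated covers coincide; tracking the chain of Poisson isomorphisms, the transverse-slice identifications, and the final normalisation argument are routine by comparison.
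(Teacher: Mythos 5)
Your proposal is correct and follows essentially the same route as the paper: both exhibit $(\ZC(d),0)$ and $(\XCt(d),\xti_d)$ as $\mub_d$-covers of the germ $(\XC(d),x_d)$ via the chain $\ZC(d)/\mub_d\cong\ZC(d,1,2)\cong\MG_\lambda(\bv)\cong\XC(d)$, use Proposition~\ref{prop:pi1} to identify the first cover as the universal cover of the smooth locus, match the two covers there, and extend across the codimension-$\ge 2$ loci using normality. The only (harmless) divergences are that you prove connectedness of $\XCt(d)^\circ$ explicitly via a transverse $A_{d-1}$-slice and $\pi_1(\OC_d)$, where the paper relies on local irreducibility of the normal space $\widetilde D$ and a degree count, and that you extend the isomorphism by a normalization/Hartogs argument instead of the paper's Stein-space extension theorem followed by the analytic Zariski main theorem.
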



\begin{proof}
	The group $W_d$ is a normal subgroup of $G(d,1,2)$ with quotient $\mub_d$. 
	By~\cite[Proposition~4.17]{BellThielCusp}, the group $\mub_d$ acts on $\ZC(d)$ such that $\ZC(d) / \mub_d \cong  \ZC(d,1,2)$. From now on, we consider all spaces as complex analytic spaces.
 
Theorem~\ref{thm:quiverslodlwyiso} says that, in the analytic topology, there exist (analytic) open balls $B$ and $D$ around $x_0$ and $x_d$ in $\ZC(d,1,2)$ and $\XC(d)$ respectively such that $\varphi \colon D \stackrel{\sim}{\longrightarrow} B$, sending $x_d$ to $x_0$. Let $\widetilde{B}$ and $\widetilde{D}$ denote the preimages of $B$ and $D$ in $\ZC(d)$ and $\XCt(d)$ respectively. By~\cite[Theorem~4]{ku}, $\widetilde{B}$ and $\widetilde{D}$ are normal complex spaces. 

The preimage of the smooth locus of $B$ and $D$ in $\widetilde{B}$ and $\widetilde{D}$ are denoted by 
$\widetilde{B}^{\circ}$ and $\widetilde{D}^{\circ}$ respectively. The complement to these open sets has (complex) codimension two. Thus Proposition~\ref{prop:pi1} says that $\pi_1(\widetilde{B}^{\circ}) = 1$. Hence $\pi_1(B_{\mathrm{sm}}) = \mub_d$ since the map $\widetilde{B}^{\circ} \to B_{\mathrm{sm}}$ is 
unramified and Galois with group $\mub_d$. Therefore, there exists a (necessarily holomorphic; see \cite{Extendholomorphi}) covering map $\psi^{\circ} \colon \widetilde{B}^{\circ} \to \widetilde{D}^{\circ}$ 
making the diagram 
$$
\begin{tikzcd}
	\widetilde{B}^{\circ} \ar[r,"\psi^{\circ}"] \ar[d] & \widetilde{D}^{\circ} \ar[d] \\
	 B_{\mathrm{sm}} &  D_{\mathrm{sm}} \ar[l,"\varphi"']
\end{tikzcd}
$$
commutative. Since the degree of both vertical maps is $d$, we deduce that $\psi^{\circ}$ is an isomorphism. Composing $\psi^{\circ}$ with the embedding $\widetilde{D}^{\circ} \hookrightarrow \XCt(d)$ gives a map $\psi \colon \widetilde{B}^{\circ} \to \XCt(d)$. 
By~\cite[VI,~Proposition~3.1]{HartshorneAmplesub}, the complex space $\XCt(d)$ is Stein. 
Therefore,~\cite[Theorem~2]{Extendholomorphi} says that $\psi$ extends to a holomorphic map $\psi \colon \widetilde{B} \to \XCt(d)$. 

Consider next the diagram 
$$
\begin{tikzcd}
	\widetilde{B} \ar[rr,"\psi"] \ar[d] & & \XCt(d) \ar[d,"\pi_d"] \\
	B \ar[r,"\varphi^{-1}"] &  D \ar[r,hook] & \XC(d).
\end{tikzcd}
$$
It is commutative when restricted to the dense open set $\widetilde{B}^{\circ}$. Therefore, it is everywhere commutative. Since the composition $\widetilde{B} \to B \to D \hookrightarrow \XC(d)$ has image equal to $D$, we deduce that the image of $\pi_d \circ \psi$ equals $D$. The map $\psi$ is $\mub_d$-equivariant (since it is so generically). Therefore, the fact that $\widetilde{D} = \pi_d^{-1}(D)$ means that the image of $\psi$ equals $\widetilde{D}$ and $\psi$ is bijective. Finally, since both $\widetilde{B}$ and $\widetilde{D}$ are normal, the analytic version of Zariski's main theorem implies that $\psi$ is biholomophic. In particular, 
the singularities $(\ZC(d),0)$ and $(\XCt(d),\xti_d)$ are locally analytically isomorphic.     
\end{proof}



\begin{coro}\label{coro:xd-isstlfg}
The isolated symplectic singularity $(\XCt(d),\xti_d)$ has trivial local fundamental group.
\end{coro}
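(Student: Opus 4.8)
The plan is to obtain this as an immediate consequence of the theorem just proved together with the already-established properties of $(\ZC(d),0)$. Recall that the local fundamental group of a normal singularity is, by definition, the fundamental group of $(B\cap\mathcal{X}_{\mathrm{sm}})$ for a sufficiently small ball $B$ around the point, and that this group is a local analytic invariant of the germ. Likewise, being an isolated symplectic singularity is a property of the germ. Since the preceding theorem (Theorem~\ref{theo:main 2}) shows that $(\XCt(d),\xti_d)$ is locally analytically isomorphic to $(\ZC(d),0)$, it therefore suffices to know that $(\ZC(d),0)$ is an isolated symplectic singularity with trivial local fundamental group — which is precisely the content of Theorem~\ref{theo:zd}(c) (plus the obvious fact that the singularity is isolated, Theorem~\ref{theo:zd}(a)) and of Theorem~\ref{theo:main 1}(a).

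To be fully explicit about the last input, I would recall how the triviality of $\pi_1^{\mathrm{loc}}(\ZC(d),0)$ was obtained: by~\eqref{eq:equivalence} the germ $(\ZC(d),0)$ is locally analytically isomorphic to $(\YC(d),0)$, the variety $\YC(d)$ carries a contracting $\CM^\times$-action (Proposition~\ref{prop:blowup} and the discussion around~\eqref{eq:yd}), so its local fundamental group at $0$ coincides with $\pi_1(\YC(d)\setminus\{0\})$, and this group is trivial by Proposition~\ref{prop:pi1}. Transporting this back along the two local analytic isomorphisms $(\XCt(d),\xti_d)\cong(\ZC(d),0)\cong(\YC(d),0)$ yields the claim.

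I do not anticipate any genuine obstacle here: the corollary is essentially a formal consequence of results already in hand, the only slightly delicate point being the (standard) fact that the local fundamental group is invariant under local analytic isomorphism of germs, so that the isomorphism produced in the proof of Theorem~\ref{theo:main 2} — which is biholomorphic between small neighbourhoods sending $\xti_d$ to $0$ — indeed transfers the triviality statement. One should also note in passing that $\xti_d$ is the unique singular point of $\XCt(d)$ (recorded before Theorem~\ref{theo:main 2}, and in any case a consequence of the local isomorphism with $(\ZC(d),0)$, whose only singular point is $0$), so that speaking of \emph{the} local fundamental group of this isolated singularity is legitimate.
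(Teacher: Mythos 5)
Your proposal is correct and matches the paper's (implicit) argument exactly: the corollary is stated in the paper without a written proof, precisely because it follows immediately from the local analytic isomorphism of Theorem~\ref{theo:main 2} combined with Theorem~\ref{theo:main 1}(a), the local fundamental group being an invariant of the analytic germ. Your additional unwinding of how the triviality for $(\ZC(d),0)$ was obtained (via $(\YC(d),0)$, the contracting $\CM^\times$-action, and Proposition~\ref{prop:pi1}) is accurate but not needed beyond the citation.
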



\section{Complements}


\subsection{Further deformation of ${\boldsymbol{\ZC(d)}}$} 
If $d \ge 4$ is even, then the Poisson variety $\ZC(d)$ admits a further
deformation~\cite{EG}, which is still a Calogero-Moser space associated with the
dihedral group $W_d$ (see~\cite{bonnafe diedral}). Let $\ZC^\#(d)$
be a generic such deformation. The variety $\ZC^\#(4)$ is smooth so is
uninteresting for our purpose but, if $d \ge 6$ then $\ZC^\#(d)$
admits a single singular point that we still denote by $0$ (see~\cite{bellamy these}
or~\cite{bonnafe diedral}).

So assume from now on that $d \ge 6$ is even. Again, general facts about Calogero-Moser spaces
say that $(\ZC^\#(d),0)$ is a symplectic singularity~\cite[Proposition~4.5]{gordon icra} and we denote by
$\Trm_0^*(\ZC^\#(d))$ the cotangent space of $\ZC^\#(d)$ at $0$, endowed
with its Lie algebra structure induced by the Poisson bracket on $\ZC^\#(d)$.
The smallest singular case $d=6$ is somewhat particular:


\begin{prop}\label{prop:sp4}
The singularities $(\ZC^\#(6),0)$ and $(\overline{\OC}_\mini^{\sG\pG_4},0)$
are locally analytically isomorphic.
\end{prop}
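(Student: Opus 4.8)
The plan is to deduce the statement from Beauville's Theorem~\ref{theo:beauville}. By the general theory of Calogero--Moser spaces (Gordon~\cite{gordon icra}), together with the description of the singular locus of $\ZC^\#(d)$ in~\cite{bellamy these} and~\cite{bonnafe diedral}, it is already known that $(\ZC^\#(6),0)$ is a $4$-dimensional isolated symplectic singularity whose maximal ideal at $0$ is a Poisson ideal, so that $\Trm_0^*(\ZC^\#(6))$ carries a Lie algebra structure (of dimension $d+4 = 10 = \dim \sG\pG_4$). It then suffices to check two things: (i) the projective tangent cone $\Prm\Trm_0(\ZC^\#(6))$ is smooth; (ii) $\Trm_0^*(\ZC^\#(6)) \simeq \sG\pG_4$ as Lie algebras. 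Granting (i), Theorem~\ref{theo:beauville} produces a simple Lie algebra $\gG$ with $(\ZC^\#(6),0)$ locally analytically isomorphic to $(\overline{\OC}_\mini^\gG,0)$, and then~\ref{eq:poisson-lie} together with (ii) forces $\gG \simeq \sG\pG_4$, which is precisely the assertion.

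For (i) I would proceed exactly as in the proof of Theorem~\ref{theo:zd}(d): starting from explicit equations for the generic dihedral Calogero--Moser space $\ZC^\#(6)$ --- obtained by the same Cherednik-algebra computation as in~\cite{bonnafe diedral egal}, now carried out at the two independent reflection parameters --- one extracts the ideal of initial forms at $0$, and smoothness of its $\Proj$ is a finite verification that can be carried out with {\sc Magma}~\cite{magma} and the {\sc Champ} package~\cite{thiel}. More conceptually, the copy of $\SL_2 \subset \GSp(V \times V^*)$ commuting with $W_6$ still acts on $\ZC^\#(6)$ (the Calogero--Moser construction being $\SL_2$-equivariant for any choice of parameters), hence on its tangent cone; decomposing the latter into $\SL_2$-isotypic components should cut the smoothness check down to a handful of highest-weight vectors, in the spirit of Remark~\ref{rem:d=4} and the proof of Proposition~\ref{prop:pi1}.

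For (ii) one computes the bracket on $\Trm_0^*(\ZC^\#(6))$ by reducing Poisson brackets of the coordinate functions modulo the square of the maximal ideal, and recognises the resulting $10$-dimensional Lie algebra as $\sG\pG_4 \simeq \sG\oG_5$, for instance by exhibiting a nondegenerate invariant bilinear form and a Cartan subalgebra with root system of type $C_2$; this is the analogue of Theorem~\ref{theo:zd}(b) and, like it, should also be readable off from~\cite{bonnafe diedral}. Alternatively, once (i) and Theorem~\ref{theo:beauville} are available, $\gG$ is a simple Lie algebra whose minimal nilpotent orbit is $4$-dimensional, and the only possibilities are $\gG \simeq \sG\lG_3$ and $\gG \simeq \sG\pG_4$; the first is excluded, e.g.\ because by~\cite[Theorem~1.1]{WW} the closure $\overline{\OC}_\mini^{\sG\lG_3}$ is the \emph{unique} $4$-dimensional isolated symplectic singularity admitting a symplectic resolution, whereas $\ZC^\#(6)$ admits none, or because a computation of the local fundamental group of $(\ZC^\#(6),0)$ along the lines of Proposition~\ref{prop:pi1} yields $\mub_2$ rather than the trivial group.

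The main obstacle is step (i): it requires genuinely usable equations for the \emph{twice}-deformed space $\ZC^\#(6)$ and enough control over their leading terms at $0$ to certify smoothness of the projective tangent cone. I expect the $\SL_2$-equivariance to be the decisive simplification --- reducing the verification to a few highest-weight components --- but, absent a sufficiently clean such reduction, one falls back on a machine computation, exactly as was done for $\ZC(4)$ in Theorem~\ref{theo:zd}(d). Step (ii) is then routine linear algebra, and the passage through Beauville's theorem is formal.
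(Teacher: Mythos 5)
Your proposal is correct and follows essentially the same route as the paper: the actual proof cites \cite[Proposition~8.8]{bonnafe diedral} for the Lie algebra isomorphism $\Trm_0^*(\ZC^\#(6)) \simeq \sG\pG_4$, verifies smoothness of the projective tangent cone by a {\sc Magma}/{\sc Champ} computation on the equations of $\ZC^\#(6)$ from \cite[Table~5]{bonnafe diedral}, and concludes by Beauville's Theorem~\ref{theo:beauville}. Your proposed $\Sb\Lb_2$-equivariant shortcut and the alternative identification of $\gG$ via the $4$-dimensionality of the minimal orbit are reasonable hedges but are not needed; the fallback you describe is exactly what the paper does.
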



\begin{proof}
By~\cite[Proposition~8.8]{bonnafe diedral}, we have an isomorphism of Lie algebras
$\Trm_0^*(\ZC^\#(6)) \simeq \sG\pG_4$ and computations with {\sc Magma}~\cite{magma},~\cite{thiel}, 
based on the equations of $\ZC^\#(6)$ given in~\cite[Table~5]{bonnafe diedral}
show that the projective tangent cone of $\ZC^\#(6)$ at $0$ is smooth.
So the result follows from Beauville's Theorem~\ref{theo:beauville}.
\end{proof}


In other words, the singularity $(\ZC^\#(6),0)$ is locally analytically isomorphic to the singularity
$(\CM^4/\mub_2,0)$. This shows in particular that the local fundamental group
of $(\ZC^\#(6),0)$ is isomorphic to $\mub_2$. 

In the general even case, the action of $\mub_2$ on $\ZC(d)$ given by 
$$(q,Q,e,a_0,\ldots ,a_m)\longmapsto (q,Q,e,-a_0,\ldots ,-a_d)$$
is free on the smooth part 
of an open neighbourhood of $0$, hence $\ZC(d)/\mub_2$ has an isolated singularity at $0$.
By the argument in the proof of Prop. \ref{prop:blowup}, this is locally analytically isomorphic 
to $(\YC(d)/\mub_2,0)$ (with the same action on $\CM^{d+4}$).
It is not difficult to show that the Calogero-Moser space associated with $W_{2d}$ and (non-generic) 
parameters $(c_1,c_2)=(0,1)$ is isomorphic to $\ZC(d)/\mub_2$.


\begin{quotation}
\noindent{\bf Question:} {\it Assume $d\geq 4$, is the singularity 
$(\ZC^\#(2d),0)$ locally analytically isomorphic to 
$(\ZC(d)/\mub_2,0)$?}
\end{quotation}


We note here a further description of $\YC(d)/\mub_2$.
Recall by Corollary \ref{coro:singlocus} that the singular locus of $\QC(2d)$ has two irreducible components 
$C_1$ and $C_2$, with ideals $J_1$ and $J_2$.
We can cover the blowup of $\QC(2d)$ at $C_1$ by affine open subsets as as we did in \S 2.C.
It is easy to see that the first affine open subset $\Spec(\CM[V\times V^*]^{W_{2d}}[J_1/D])$ 
is isomorphic to $\YC(d)/\mub_2$.
In fact, the singular locus of this blowup is a disjoint union of a $2$-dimensional subset 
(the pre-image of $C_2$, which can be resolved by blowing up once more) and the singular point 
$0\in\YC(d)/\mub_2$. Similarly, $\Spec(\CM[V\times V^*]^{W_{2d}}[J_2/D]$ is isomorphic to $\YC(d)/\mub_2$.

%
%
%
%


\subsection{Hilbert series}\label{subsec:hilb}
We compute here the Hilbert series of the graded algebra $\CM[\YC(d)]$. 
For this, recall that any symplectic singularity is Cohen-Macaulay 
(and even Gorenstein~\cite[Proposition~1.3]{beauville}). 
Therefore, $\CM[\YC(d)]$ is a graded Cohen-Macaulay ring. 
Let $R=\CM[q,Q,b_0,b_d]$ and let $\mG$ denote the unique 
graded maximal ideal of $R$. If $f \in \CM[\YC(d)]$, we denote by 
$\fba$ its image in $\CM[\YC(d)]/\langle \mG \rangle$. Then
\equat\label{eq:somme}
\CM[\YC(d)]/\langle \mG \rangle=\CM \oplus \CM \eba \oplus \cdots \oplus \CM \eba^{d-2} 
\oplus \CM \bba_1 \oplus \CM \bba_2 \oplus \cdots \oplus \CM \bba_{d-1}
\endequat
\begin{proof}[Proof of~\eqref{eq:somme}]
By definition, $\CM[\YC(d)]/\langle \mG \rangle$ is the commutative 
$\CM$-algebra whose presentation is given by:
$$
\begin{cases}
\text{Generators:}\quad \eba,\bba_1,\dots,\bba_{d-1} \\
\text{Relations:} \quad (\#)\quad
\begin{cases}
\forall 1 \le j \le d-1,~\eba \bba_j =0\\
\forall 1 \le j \le k \le d-1,~\bba_j \bba_k =
\begin{cases}
0 & \text{if $j+k \neq d$,}\\
-\eba^{d-2} & \text{if $j+k=d$.}\\
\end{cases}
\end{cases}
\end{cases}
$$
Then $\CM + \CM \eba + \cdots + \CM \eba^{d-2} + 
\CM \bba_1 + \CM \bba_2 + \cdots + \CM \bba_{d-1}$ is a subalgebra 
of $\CM[\YC(d)]/\langle \mG \rangle$ (because $\eba^{d-1}=\eba \bba_1\bba_{d-1}=0$), 
which contains all the generators. So it is equal to $\CM[\YC(d)]/\langle \mG \rangle$. 
It only remains to show that $\dim \CM[\YC(d)]/\langle \mG \rangle \ge 2d-2$. 

For this, let $E_{k,l}$ denote the elementary $(2d-2) \times (2d-2)$-matrix 
whose only non-zero entry is is the $(k,l)$-entry, which is equal to $1$. We 
set
$$\Eba=E_{2,1}+E_{3,2}+\cdots+E_{d-1,d-2}\qquad\text{and}\qquad 
\Bba_j=E_{d-1+j,1}-E_{d-1,2d-j-1}$$
for $1 \le j \le d-1$. Then the relations~(\#) show that 
there is a unique morphism of algebras 
$\th : \CM[\YC(d)]/\langle \mG \rangle \longto \Mat_{2d-2}(\CM)$ sending $\eba$ to $\Eba$ 
and $\bba_j$ to $\Bba_j$. As 
$\th(1)$, $\th(\eba)$,\dots, $\th(\eba^{d-2})$, $\th(\bba_1)$,\dots, $\th(\bba_{d-1})$ 
are linearly independent, this proves the result.
\end{proof}


By~\eqref{eq:somme}, $\CM[\YC(d)]/\langle \mG \rangle$ is finite-dimensional, 
so the graded Nakayama Lemma implies that $\CM[\YC(d)]$ is a finitely 
generated $R$-module. This shows that $(q,Q,b_0,b_d)$ is a system of 
parameters: since $\CM[\YC(d)]$ is Cohen-Macaulay, this shows that 
\equat\label{eq:somme directe}
\CM[\YC(d)]=R \oplus R e \oplus \cdots \oplus R e^{d-2} 
\oplus R b_1 \oplus R b_2 \oplus \cdots \oplus R b_{d-1}.
\endequat
The Hilbert series $\Hrm_{\YC(d)}(\tb)$ of $\CM[\YC(d)]$ is then easily computed:
\equat\label{eq:hilbert series}
\Hrm_{\YC(d)}(\tb)=
\cfrac{1 + \tb^2 + \cdots + \tb^{2d-4} + (d-1) \tb^{d-2}}{(1 - \tb^2)^2 (1 - \tb^{d-2})^2}.
\endequat


\subsection{Higher dimension} 
We explain here why the $\Sb\Lb_2$-equivariant morphism 
$\QC(d) \to \sG\lG_2$ is a particular case of a more general situation. 
For this, let $(O,(.,.))$ be a finite dimensional orthogonal vector space 
and let $(S,\o)$ be a finite dimensional symplectic vector space. 
For $f \in \Hom(S,O)$ (resp. $g \in \Hom(O,S)$), we denote by $f^*$ (resp. $g^*$) 
the unique element of $\Hom(O,S)$ (resp. $\Hom(S,O)$) such that 
$(f(s),o)=\o(s,f^*(o))$ (resp. $\o(g(o),s)=(o,g^*(s))$) for all $o \in O$ and $s \in S$. 
Note that $f^{**}=-f$ and $g^{**}=-g$. Moreover, the morphisms
$$
\xymatrix{
& \Hom(S,O) \ar[dl]_{\pi_{\sG\pG}} \ar[dr]^{\pi_{\sG\oG}} \\
\sG\pG(S) && \sG\oG(O)
}
$$
where $\pi_{\sG\pG}(f) = f^*f$ and $\pi_{\sG\oG}(f) = ff^*$ are the quotient maps by $\GO(O)$ 
and $\Sp(S)$ respectively, which are equivariant with respect to $\Sp(S)$ and $\GO(O)$ respectively \cite{KP}.


\begin{exemple}\label{ex:sosp}
Assume here that $S=U_2$, the natural module of $\Gb\Lb_2$ endowed with the symplectic 
form given by the matrix $\begin{pmatrix} 0 & -1 \\ 1 & 0 \end{pmatrix}$, and that 
$O=V$ endowed with the symmetric bilinear 
form given by the matrix $\begin{pmatrix} 0 & 1 \\ 1 & 0 \end{pmatrix}$. 
Note that the quadratic form associated with $(.,.)$ is $Q \in \CM[V]$. 
Then 
$$V \times V^* \simeq V \times V \simeq U_2 \otimes V \simeq U_2^* \otimes V \simeq \Hom(U_2,V) 
\simeq \Mat_2(\CM)$$
(see Remark~\ref{rem:explicit} for the first two isomorphisms). Through 
this sequence of isomorphisms, the action of $\Ob(V)$ (resp. $\Gb\Lb_2$) 
is by left (resp. right) multiplication. Note that 
$$\sG\pG(S)=\sG\lG_2\qquad\text{and}\qquad \sG\oG(V)=\{\diag(z,-z)~|~z \in \CM\}.$$
To respect the notation of the paper, 
if $f \in \Mat_2(\CM)$ has matrix $\begin{pmatrix} x & Y \\ y & X \end{pmatrix}$, 
then $f^*$ has matrix 
$\begin{pmatrix} 0 & -1 \\ 1 & 0 \end{pmatrix}\begin{pmatrix} x & Y \\ y & X \end{pmatrix}
\begin{pmatrix} 0 & 1 \\ 1 & 0 \end{pmatrix}=\begin{pmatrix} X & Y \\ -y & -x \end{pmatrix}$.
Computing $\pi_{\sG\pG}(f) = f^{*}f $ and $\pi_{\sG\oG}(f) = ff^{*}$, we get the following 
commutative diagrams 




$$
\xymatrix{
& \hskip-1cm V \times V^* \simeq \Mat_{2}(\CM) \hskip-1cm
\ar[dl]_{\pi_{\sG\pG}} \ar[dr]^{\pi_{\sG\oG}}&\\
\sG\pG(U_2) \simeq \sG\lG_{2}\hskip-1cm \ar[dr]^{\det}&& \hskip-1cm\sG\oG(V) \simeq \CM \ar[dl]_{\det}\\
& \CM & 
}
\qquad 
\xymatrix{
& {\left( \begin{smallmatrix} x & Y \\ y & X \end{smallmatrix} \right)} \xymap[dl]_{\pi_{\sG\pG}} \xymap[dr]^{\pi_{\sG\oG}} &\\
{\left( \begin{smallmatrix} xX + yY & 2XY \\ -2xy & - xX - yY \end{smallmatrix} \right)}
\hskip-1cm \xymap[dr]^\det
&& \hskip-1cm 
{\left( \begin{smallmatrix} xX - yY & 0 \\ 0 & - xX + yY \end{smallmatrix} \right)}\xymap[dl]_\det \\
& \hskip-1cm \substack{\vphantom{A} \\ \DS{-(xX - yY)^2}}  \hskip-1cm &
}
$$
The map $\pi_{\sG\pG}$ factorizes through the quotient by $\Ob(V)$ but, since 
$W_d \subset \Ob(V)$, it factorizes through a map $\QC(d) \longto \sG\lG_2$ 
which is nothing but the composition of the first projection with the closed immersion 
$\QC(d) \injto \sG\lG_2 \oplus \Sym^d(U_2)$. 
On the other hand, $\pi_{\sG\oG}$ identifies with the map $\d$ 
(once we identify $\sG\oG(V)$ with $\CM$). 
The relation $\det f^*f = \det ff^*$ gives $\delta^2 = e^2 - 4qQ$ 
(which can also be checked directly, of course).\finl
\end{exemple}


It is a natural question whether the construction of Example~\ref{ex:sosp} 
can be extended to higher dimensional settings to produce 
other interesting symplectic singularities.


\end{document}